\pdfoutput=1
\documentclass[11pt,reqno]{amsart}
\usepackage[top=1in,bottom=1in,left=1.2in,right=1.2in]{geometry}
\usepackage[T1]{fontenc}
\usepackage[utf8]{inputenc}
\usepackage{amsmath,amsthm,amssymb,amsfonts}
\usepackage{mathtools}
\usepackage{enumitem}
\usepackage{lmodern}
\usepackage{xcolor}
\usepackage[protrusion=true,expansion=false]{microtype} 
\usepackage{caption}
\usepackage{booktabs} 
\newtheorem{theorem}{Theorem}[section]
\newtheorem{proposition}[theorem]{Proposition}
\newtheorem{lemma}[theorem]{Lemma}
\newtheorem{corollary}[theorem]{Corollary}
\theoremstyle{definition}
\newtheorem{definition}[theorem]{Definition}
\theoremstyle{remark}
\newtheorem{remark}[theorem]{Remark} 
\newcommand{\R}{\mathbb{R}}
\newcommand{\Rp}{\mathbb{R}_{>0}}

\newcommand{\norm}[1]{\left\lVert #1 \right\rVert}
\newcommand{\avg}[1]{\overline{#1}}
\newcommand{\zero}{\mathrm{zero}}
\newcommand{\nonzero}{\mathrm{nonzero}}
\newcommand{\inconclusive}{\mathrm{inconclusive}}
\title[Coercive Projection Theorem]{The Coercive Projection Theorem for\\
Canonical Reciprocal Costs}
\author{Jonathan Washburn}
\address{Recognition Physics Research Institute, Austin, TX USA}
\email{jon@recognitionphysics.org}

\author{Amir Rahnamai Barghi}
\address{Recognition Physics Research Institute, Austin, TX USA}
\email{arahnamab@gmail.com}
\thanks{* Correspondence: \texttt{arahnamab@gmail.com}.}

\date{} 
\newcommand{\Mean}{\operatorname{Mean}}

\providecommand{\N}{\mathbb{N}}
\providecommand{\Z}{\mathbb{Z}}
\newcommand{\eps}{\varepsilon}

\usepackage{hyperref}
\hypersetup{hidelinks}

\begin{document}
\begin{abstract}
We develop a finite-data framework for certifying \emph{zero-defect} (neutral) configurations of positive vectors under the canonical separable reciprocal cost.
We show that this scalar cost is characterized among non-constant continuous costs by the Recognition Composition Law together with a local quadratic calibration at balance; in particular, reciprocity symmetry and the normalization at the neutral point follow from the composition law.
Under a conservation constraint and short-window observations of a rational (finite--state) signal class, we construct a canonical decision procedure that is \emph{locally maximal on the identifiability locus} among all sound procedures: any sound rule that resolves a datum must agree with the canonical output, and cannot resolve strictly more cases.

The method is organized as $\Phi^\ast=A\circ B\circ P$: the projection/coercivity core is forced by the canonical-cost axioms, while the aggregation/reconstruction step is specified on a non-degenerate identifiability locus (e.g.\ a Hankel invertibility condition).

\end{abstract}
\maketitle
\par\medskip
\noindent\textbf{MSC 2020:} Primary 90C25; Secondary 47H05, 94A12.\par
\noindent\textbf{Keywords:} canonical reciprocal cost; convex analysis; proximal maps; coercivity; finite-window aggregation; inverse problems; certification.\par
\medskip

\section{Introduction}

We introduce a finite-window certification problem in which observed aggregates are tested for compatibility with a neutral \emph{zero-defect} class. After fixing notation and the ratio-induced cost framework, we summarize the main results and explain how the later sections build the proof pipeline. For additional background on the broader Recognition Science program and its geometric framing, see \cite{washburnnetal2026RG}.

\paragraph{Relation to Hankel/Prony reconstruction.} Although our setting is organized around aggregated window data and a canonical convex-analytic decision core, the non-degenerate identifiability locus is expressed via finite Hankel full-rank/invertibility conditions. This connects to the classical finite-rank/Hankel-structure viewpoint (e.g.\ Kronecker-type characterizations and related operator theory) and to Prony-style exponential fitting, where solving structured linear systems built from short samples is central; see, for example, \cite{peller2003hankel,osborne1995prony}.

\subsection{Connection to classical projection and proximal tools}
The certification template is \emph{forced} by our axioms, but its shape echoes classical projection/proximal
ideas from convex analysis. We refer to the main theorem as the \emph{Coercive Projection Theorem} (CPT), and we
write its canonical finite-data certifier as $\Phi^\ast$. First, the CPT certifier $\Phi^\ast$ begins with a projection-like update built from the
canonical reciprocal cost together with a coercive regularization; at a structural level this plays the role of
a proximity map that turns a penalty into a stable ``best-approximation'' operator (cf. Moreau's proximity
operator and envelope \cite{moreau1965}). Second, because our state variables are inherently multiplicative
(ratios/scales), the natural geometry is non-Euclidean: Bregman divergences \cite{bregman1967} describe projection in coordinates adapted to log/ratio structure, matching the way our axioms select log coordinates as
the intrinsic ``neutral'' parameterization. Third, the coercive resolvent-type step that converts defect bounds
into uniqueness and perturbation stability is in the spirit of Rockafellar's proximal-point framework for
monotone operators \cite{rockafellar1976}.

The difference is that the classical theory typically offers a \emph{family} of legitimate proximal/projection
schemes (choice of divergence, step size, regularizer). In this paper we separate two levels explicitly: Lemma~\ref{lem:forced-factor} gives the per-state order-theoretic representation, and Theorem~\ref{thm:forced-factor-unique} gives the global uniqueness/forcedness statement under explicit cross-state rigidity hypotheses. The subsequent aggregation/reconstruction stage $A$ is a standard inverse-problem ingredient and is only invoked on a non-degenerate (locally invertible) measurement locus; it is an additional hypothesis beyond the cost axioms. This separation is what makes the resulting certifier both canonical (in its cost-driven components) and reproducible.

\subsection{The problem}
Given finite observational access to a configuration $x\in(\Rp)^n$, decide whether it has
\emph{zero-defect}. In the present setting, ``zero-defect'' means membership in the structured
set
\[
S:=\{x\in(\Rp)^n:\ J_n(x)=0\}.
\]
\noindent Here \(J_n:(\Rp)^n\to[0,\infty)\) denotes the separable extension of \(J\),
\begin{equation}\label{eq:Jn}
J_n(x):=\sum_{i=1}^n J(x_i).
\end{equation}
More generally, ratio-induced recognition costs take the form
\begin{equation}\label{eq:cso}
c(s,o):=J\!\left(\frac{\iota_S(s)}{\iota_O(o)}\right),
\end{equation}
where $\iota_S,\iota_O$ are positive scale maps and $J:(0,\infty)\to[0,\infty)$ is a penalty.

Under inversion symmetry, strict convexity/coercivity, normalization at $1$, and a multiplicative
d'Alembert identity, one can classify admissible penalties and recover (up to a scale parameter) the same
canonical form $J(x)=\cosh(a\log x)-1$; see \cite{washburnbarghi2026RCC}. In this paper we take the
normalized choice \eqref{eq:Jdef} as given and focus on the finite-data certification problem. Under the canonical cost, $S$ collapses to the single configuration $x\equiv 1$ (all components
equal to $1$). The question is therefore the most basic decision task: determine from finite-data
whether $x$ is exactly the neutral configuration.
\subsection{Main goal and results}

Our goal is to characterize the \emph{locally optimal} and \emph{structurally forced} certification procedure for the basic decision question ``is the configuration equal to the neutral state $x\equiv 1$?'' on the rational/finite-state class. Under three explicit hypotheses-(i) the canonical reciprocal cost forced by the composition law, (ii) a conservation constraint, and (iii) finite local resolution with window length $W\ge 1$ (e.g.\ $W=8$)-we show that the canonical procedure is locally maximal (in resolve-set domination) on the identifiability locus, and that any sound procedure that resolves there must agree with its output. Moreover, the operational pipeline uses three universal components: a projection step $P$, a coercivity step $B$, and an aggregation step $A$. The \emph{structure} of this decomposition and the sharp coercivity constant in $B$ are consequences of the canonical-cost axioms, while the reconstruction/aggregation step $A$ requires an \emph{additional} nondegeneracy (local invertibility) hypothesis for the window map (e.g.\ Hankel/Vandermonde nonsingularity as in Theorem~\ref{thm:window-ident-global}). Our maximality statements (e.g.\ Theorem~\ref{thm:domination}) are correspondingly local to that locus and do not claim global parameter recovery from window sums.

\subsection{Rigidity and local optimality of the canonical pipeline}
The term \emph{method} suggests optional design choices. In our setting we prove a rigidity statement: among all sound certification rules (Definition~\ref{def:sound}), the canonical pipeline $\Phi^\ast=A\circ B\circ P$ is locally maximal on the identifiability locus (Theorem~\ref{thm:domination}).

\paragraph{What is proved.}
Rather than packaging the contribution as a list of broad meta-properties, we summarize the concrete statements established in this paper:
\begin{enumerate}[label=\textup{(\roman*)},leftmargin=2.2em]
\item \textbf{Quantitative sound certification under noise.} Theorems~\ref{thm:eps-tolerant} and~\ref{thm:eps-cert} give explicit $\varepsilon$--stability bounds for the realized costs and the associated meaning sets, including the sharp two-sided tolerance (membership in $\Mean_{2\varepsilon}$ rather than $\Mean_{\varepsilon}$).
\item \textbf{Local domination among sound rules.} Theorem~\ref{thm:domination} proves that on the identifiability locus the canonical pipeline $\Phi^\ast=A\circ B\circ P$ is locally maximal among all sound certification rules (Definition~\ref{def:sound}), i.e.\ no sound rule can resolve strictly more instances in a neighborhood.
\item \textbf{Rigidity/uniqueness consequences of the axioms.} Theorem~\ref{thm:RCL-unique} characterizes the canonical reciprocal scalar cost from the Recognition Composition Law together with the local calibration at balance, and Theorem~\ref{thm:forced-factor-unique} records the induced factorization/uniqueness behavior for certificates under explicit rigidity hypotheses.
\item \textbf{Uniqueness among locally optimal rules.} Corollary~\ref{cor:unique-local-optimal} shows that on the identifiability neighborhood $\mathcal{U}_{d,W}$ the canonical rule is not only locally maximal but also unique among procedures achieving local maximality: any sound rule that is locally optimal on $\mathcal{U}_{d,W}$ must agree with $\Phi^\ast$ wherever it resolves.
\end{enumerate}

Lemma~\ref{lem:forced-factor} shows the local ordinal representation, while Theorem~\ref{thm:forced-factor-unique} proves uniqueness of the realized-cost reparametrization and of the state-free profile under explicit rigidity assumptions; the aggregation/reconstruction step $A$ is then pinned down on any non-degenerate (locally invertible) measurement locus, but is \emph{not} implied by the cost axioms alone. For that reason we treat
the procedure as a theorem-the \emph{Coercive Projection Theorem} (CPT)-rather than as an
algorithmic ``recipe'' one could freely modify.

We study a compatibility certificate for the neutral class: under the stated axioms, we give sufficient conditions ensuring that the observed finite-window aggregates admit a model-consistent realization in the neutral \emph{zero-defect} class. Criteria for ruling out neutrality, or for statistical hypothesis testing, are not treated here.

\begin{remark}\label{rem:kernel-dict}
For convenience we record a short dictionary between the objects defined in this paper and standard cost-based selection terminology. (We avoid the word ``kernel'' here, since in other contexts it denotes a nullspace or vanishing set.) The \emph{base cost} is our ratio-induced map $c:S\times O\to[0,\infty)$ given by $c(s,o):=J(\iota_S(s)/\iota_O(o))$. The associated \emph{meaning set} (argmin selector) is $\Mean(s):=\arg\min_{o\in O} c(s,o)$. The corresponding \emph{defect} (or \emph{gap}) is
\[
\Delta(s,o):=c(s,o)-\inf_{o'\in O}c(s,o'),
\]
so that $\Delta(s,o)=0$ iff $o\in\Mean(s)$. Finally, the CPT \emph{certificate} $\mathcal{C}$ is a scalar test statistic built from the $W$-block window measurements; in Section~\ref{sec:applications} it is used to separate the null (neutral) configuration from the nonzero class under the stated coercivity and identifiability hypotheses. This remark is purely notational and introduces no additional assumptions.
\end{remark}

\section{Model, axioms, and finite-window measurements}\label{sec:axioms}

This section sets up the mathematical model. We define the state and observation spaces, scale maps, the canonical reciprocal cost and induced defect, and the meaning/selection map. We then define the finite-window aggregation operator and list the axioms used in all subsequent theorems.

\medskip
\noindent\textbf{Big-picture guide.} Section~2 fixes the canonical cost and the neutral/defect geometry in log-coordinates (the forced steps $P$ and $B$). It then defines the finite-window measurement operator $\mathcal W$ and records the identifiability locus needed for the reconstruction step $A$. Section~3 uses these ingredients to define the finite-data procedure and prove soundness and domination (maximality) on the stated non-degenerate locus.
\medskip

We state the axioms and fix notation for later use. Proofs are provided where the results are stated. \begin{definition}\label{def:system}
A \emph{recognition cost system} is a quadruple $(\Rp,J,\sigma,W)$ where:
\begin{enumerate}[label=\textup{(C\arabic*)},leftmargin=2.2em]
\item \textbf{Recognition Composition Law (primitive axiom for $J$).}
We assume a scalar cost $J:(0,\infty)\to\mathbb{R}$ is non-constant, twice differentiable in a neighborhood of $1$, and satisfies the Recognition Composition Law (RCL):
This $J$ is the penalty appearing in the ratio-induced recognition cost $c(s,o)$ in \eqref{eq:cso}, and we write $J_n$ for its separable extension \eqref{eq:Jn}.
\begin{equation}\label{eq:RCL}
J(xy)+J(x/y)=2J(x)+2J(y)+2J(x)J(y),\qquad x>0,\ y>0,
\end{equation}
together with the curvature normalization $J''\!(1)=1$.
We use the corresponding canonical reciprocal cost,
\begin{equation}\label{eq:Jdef}
J(x):=\frac12\bigl(x+x^{-1}\bigr)-1,\qquad x>0.
\end{equation}
\item \textbf{Conservation.} For $x\in(\Rp)^n$, define
$\sigma(x):=\sum_{i=1}^n \log x_i$. Admissible configurations satisfy $\sigma(x)=0$.
\item \textbf{Finite resolution.} The observation window has fixed integer length $W\ge 1$ (in examples we take $W=8$):
only $W$ consecutive evaluations per cycle of the underlying finite-state representation
are available.
\end{enumerate}
\end{definition}

\noindent For later use we write $J_n(x):=\sum_{i=1}^n J(x_i)$ for $x\in(\Rp)^n$.

\begin{remark}
(i) The Recognition Composition Law together with non-constancy forces the balance normalization $J(1)=0$ and reciprocity $J(x)=J(1/x)$. Under the stated $C^2$ regularity near $1$ and the local quadratic calibration in Axiom~2 (equivalently $J''(1)=1$; see Appendix), the cost is uniquely determined; equivalently, it is given by \eqref{eq:Jdef}.

(ii) Section~\ref{subsec:domination} will make precise the ``rational/finite-state class'' and the
window data model used by the aggregation step.
\end{remark}

\subsection{Roadmap of the proof}\label{subsec:roadmap}
We will (i) identify the canonical cost and its log-coordinate form,
(ii) define the three operations $P,B,A$, and (iii) prove the local representation lemma (Lemma~\ref{lem:forced-factor}) together with the strengthened global uniqueness theorem (Theorem~\ref{thm:forced-factor-unique}) for the forced-factorization layer under explicit rigidity hypotheses.
Complete proofs of the window-data identifiability and domination steps are included in Sections~\ref{sec:aggregation} and \ref{subsec:domination}. 

\subsection{Canonical cost and defect geometry}\label{sec:geometry} This subsection collects basic properties of the canonical reciprocal cost $J$ and the associated defect in log-coordinates, which will later control the aggregation and certification steps.

\subsubsection{The canonical reciprocal cost}
We recall that $J$ is fixed by \eqref{eq:Jdef}. The following logarithmic-coordinate reformulation is used repeatedly.
\begin{lemma}\label{lem:logform}
Let $\varphi:\R\to\R_{\geq 0}$ be defined by $\varphi(t):=\cosh(t)-1$.
Then for all $t\in\R$,
\[
J(e^t)=\varphi(t)=\cosh(t)-1.
\]
In particular, $J(x)=0$ if and only if $x=1$.
\end{lemma}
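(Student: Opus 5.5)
The plan is a direct substitution into the definition \eqref{eq:Jdef}, followed by an elementary characterization of the zero set of $\cosh(t)-1$.

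For the identity, fix $t\in\R$ and set $x=e^t>0$, so that $x^{-1}=e^{-t}$. By \eqref{eq:Jdef},
\[
J(e^t)=\tfrac12\bigl(e^t+e^{-t}\bigr)-1=\cosh(t)-1=\varphi(t),
\]
using the definition of $\cosh$. Since $\exp:\R\to\Rp$ is a bijection with inverse $\log$, this gives $J(x)=\varphi(\log x)$ for every $x>0$. This is the form in which the identity will be used later.

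For the zero set, I will show that $\varphi(t)\ge 0$, with equality if and only if $t=0$. The cleanest route is the half-angle identity
\[
\cosh(t)-1=2\sinh^2(t/2),
\]
which follows from $e^t+e^{-t}-2=(e^{t/2}-e^{-t/2})^2$. Hence $\varphi(t)\ge0$, and $\varphi(t)=0$ exactly when $\sinh(t/2)=0$. Because $\sinh$ is strictly increasing and vanishes only at $0$, this happens if and only if $t=0$.

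Applying this with $t=\log x$ gives $J(x)=0$ if and only if $\log x=0$, that is, if and only if $x=1$. Equivalently, and without logarithms, $J(x)=\frac{(x-1)^2}{2x}$ for $x>0$, which vanishes exactly at $x=1$. This also confirms that $J$ indeed takes values in $[0,\infty)$, as asserted when the codomain $\R_{\ge0}$ was specified.

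There is no genuine obstacle here. The only point needing care is to state the bijection $t\leftrightarrow x=e^t$ explicitly, so that the zero-set claim, which is about $x$, follows correctly from the statement about $t$.
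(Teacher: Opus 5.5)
Your proof is correct and follows the paper's argument: substitute $x=e^t$ into \eqref{eq:Jdef} to obtain $\cosh(t)-1$, then use that $\cosh(t)-1$ vanishes only at $t=0$ and pass back through the bijection $t\mapsto e^t$. The only difference is that you justify ``$\cosh(t)\ge 1$ with equality only at $t=0$'' via the identity $\cosh(t)-1=2\sinh^2(t/2)$, whereas the paper asserts this fact without proof.
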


\begin{proof}
Using $\cosh(t)=\tfrac12(e^t+e^{-t})$ we compute
$J(e^t)=\tfrac12(e^t+e^{-t})-1=\cosh(t)-1$.
Since $\cosh(t)\geq 1$ with equality only at $t=0$, we have $J(e^t)=0\iff t=0\iff e^t=1$.
\end{proof}
\subsubsection{Neutrality and projection in log-coordinates}
We work in log-coordinates $y=\log x\in\R^n$ for $x\in(\Rp)^n$.
\begin{definition}\label{def:proj}
For $n\in\mathbb N$ let $\mathbf 1\in\R^n$ denote the all-ones vector. For $y\in\R^n$ define its mean
\[
\bar y:=\frac1n\langle y,\mathbf 1\rangle=\frac1n\sum_{i=1}^n y_i,
\]
and define the \emph{mean-zero projection} $P:\R^n\to\R^n$ by
\begin{equation}\label{eq:Pdef}
P(y):=y-\bar y\,\mathbf 1.
\end{equation}
equivalently, $P$ is the orthogonal projection onto the subspace $\{z\in\R^n:\langle z,\mathbf 1\rangle=0\}$.
\end{definition}

\begin{lemma}\label{lem:Pkernel}
For $y\in\R^n$ with mean $\bar y:=\frac1n\langle y,\mathbf 1\rangle$, the projection $P(y)=y-\bar y\,\mathbf 1$
vanishes if and only if $y$ is a constant vector, i.e.\ $y=\bar y\,\mathbf 1$.
In particular, if additionally $\langle y,\mathbf 1\rangle=0$, then $P(y)=0$ implies $y=0$.
\end{lemma}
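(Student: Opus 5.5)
The plan is to unwind Definition~\ref{def:proj} directly. No earlier result beyond that definition is needed, and the argument has two short steps.

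\textbf{Step 1: the equivalence.} For the forward direction, suppose $P(y)=0$. By \eqref{eq:Pdef} this says exactly that $y=\bar y\,\mathbf 1$, so $y$ is constant. For the converse, suppose $y=c\,\mathbf 1$ for some $c\in\R$. We first identify $c$ with the mean by computing
\[
\bar y=\tfrac1n\langle c\mathbf 1,\mathbf 1\rangle=\tfrac1n\,c\,n=c.
\]
Substituting this into \eqref{eq:Pdef} gives $P(y)=c\mathbf 1-c\mathbf 1=0$. This also justifies writing the constant as $\bar y$ in the statement.

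\textbf{Step 2: the ``in particular'' clause.} Suppose in addition that $\langle y,\mathbf 1\rangle=0$. Then $\bar y=0$ by the definition of the mean. If also $P(y)=0$, Step~1 gives $y=\bar y\,\mathbf 1=0\cdot\mathbf 1=0$.

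There is no genuine obstacle here. The only point needing care is the converse in Step~1: we must check that the constant $c$ coincides with the mean $\bar y$, which uses $\langle\mathbf 1,\mathbf 1\rangle=n$.

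As an optional cross-check, one can use the remark in Definition~\ref{def:proj} that $P$ is the orthogonal projection onto $\mathbf 1^\perp$. Its kernel is then $\mathbf 1^\perp{}^\perp=\operatorname{span}\{\mathbf 1\}$, which matches Step~1.
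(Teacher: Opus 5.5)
Your proposal is correct and follows the paper's proof: both read $P(y)=0\iff y=\bar y\,\mathbf 1$ directly off \eqref{eq:Pdef}, then use $\langle y,\mathbf 1\rangle=0$ to get $\bar y=0$ and hence $y=0$. Your check that a constant vector $c\,\mathbf 1$ has mean $c$ spells out the converse direction that the paper leaves implicit. Reading $\bar y=0$ straight from the definition of the mean is also slightly more direct than the paper's computation $0=\langle\bar y\,\mathbf 1,\mathbf 1\rangle=n\bar y$.
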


\begin{proof}
By definition, $P(y)=0$ is equivalent to $y=\bar y\,\mathbf 1$, which is exactly the condition that all coordinates
of $y$ are equal to the constant $\bar y$.
If moreover $\langle y,\mathbf 1\rangle=0$, then $0=\langle y,\mathbf 1\rangle=\langle \bar y\,\mathbf 1,\mathbf 1\rangle=n\bar y$,
so $\bar y=0$ and hence $y=0$.
\end{proof}

\begin{definition}\label{def:defect}
For $x\in(\Rp)^n$ let $y=\log x$ and define the \emph{defect} of $x$ by
\[
\mathrm{Def}(x):=\norm{P(y)}_2.
\]
We say that $x$ has \emph{zero-defect} if $\mathrm{Def}(x)=0$.
\end{definition}

\begin{remark}
Under the conservation constraint $\sigma(x)=\sum_i \log x_i=0$,
Lemma~\ref{lem:Pkernel} implies that $\mathrm{Def}(x)=0$ forces $\log x=0$ and hence $x\equiv 1$.
\end{remark}

\subsubsection{Coercivity: energy gap controls defect}\label{sec:coercivity} The coercivity step converts cost information into a quantitative defect bound. It rests on the
elementary inequality $\cosh(t)-1\geq t^2/2$. \begin{lemma}\label{lem:coshcoercive}
For all $t\in\R$,
\[
\cosh(t)-1\ \geq\ \frac{t^2}{2},
\]
with equality if and only if $t=0$.
\end{lemma}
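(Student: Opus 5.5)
The plan is to study the auxiliary function $g(t):=\cosh(t)-1-\tfrac{t^2}{2}$ on $\R$ and show that $g\ge 0$, with equality only at $t=0$. Since $g$ is even, it suffices to treat $t\ge 0$ and transfer the result to negative $t$ by the symmetry $g(-t)=g(t)$.

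The key steps are derivative computations. First, $g(0)=0$ and $g'(t)=\sinh(t)-t$, so $g'(0)=0$. Next, $g''(t)=\cosh(t)-1$, and $g''(t)>0$ for every $t\neq 0$, since $\cosh(t)>1$ away from the origin (this is the same fact used in Lemma~\ref{lem:logform}). Hence $g'$ is strictly increasing on $\R$. Combined with $g'(0)=0$, this gives $g'(t)<0$ for $t<0$ and $g'(t)>0$ for $t>0$. Therefore $g$ is strictly decreasing on $(-\infty,0]$ and strictly increasing on $[0,\infty)$. So $t=0$ is the unique global minimizer, and $g(t)>g(0)=0$ for all $t\ne 0$. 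This yields both the inequality and the equality case.

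As an alternative, one can compare the power series termwise:
\[
\cosh(t)-1=\frac{t^2}{2}+\sum_{k\ge2}\frac{t^{2k}}{(2k)!}.
\]
All tail terms are nonnegative. The $k=2$ term $t^4/24$ is strictly positive whenever $t\neq0$, which again gives strict inequality off the origin.

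There is no real obstacle here. The only point needing care is the strictness claim, which requires $g''>0$ off a single point, not merely $g''\ge0$. Strict monotonicity of $g'$ still holds because $g''$ vanishes only at the isolated point $0$. Alternatively, the $t^4/24$ term in the series argument handles strictness directly.
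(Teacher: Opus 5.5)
Your proof is correct and follows essentially the same route as the paper: the auxiliary function $\cosh(t)-1-\tfrac{t^2}{2}$, with second derivative $\cosh(t)-1\ge 0$ and vanishing first derivative at $0$, so that $t=0$ is the global minimum. Your observation that the second derivative is strictly positive off the origin, which makes the first derivative strictly increasing, handles the equality case more carefully than the paper does; the paper asserts strictness after deriving only the non-strict signs of the first derivative.
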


\begin{proof}
Define $f(t):=\cosh(t)-1-\tfrac12 t^2$. Then $f(0)=0$, $f'(t)=\sinh(t)-t$ and
$f''(t)=\cosh(t)-1\geq 0$. Thus $f'$ is increasing with $f'(0)=0$, hence $f'(t)\geq 0$ for $t\geq 0$
and $f'(t)\leq 0$ for $t\le 0$, so $t=0$ is a global minimum of $f$. Therefore $f(t)\geq 0$ for all $t$,
with equality only at $t=0$.
\end{proof} \begin{theorem}\label{thm:coercive}
Let $x\in(\Rp)^n$ and $y=\log x$. Then
\begin{equation}\label{eq:coercive}
\sum_{i=1}^n J\!\left(e^{P(y)_i}\right)\ \geq\ \frac12\,\norm{P(y)}_2^2 \;=\;\frac12\,\mathrm{Def}(x)^2.
\end{equation}
In particular, if $\sum_{i=1}^n J\!\left(e^{P(y)_i}\right)=0$ then $\mathrm{Def}(x)=0$.
\end{theorem}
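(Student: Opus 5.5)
The plan is a coordinatewise reduction to the scalar inequality of Lemma~\ref{lem:coshcoercive}, followed by summation. Set $z:=P(y)\in\R^n$, so $z_i=y_i-\bar y$. By Lemma~\ref{lem:logform}, applied with $t=z_i$, each summand on the left of \eqref{eq:coercive} satisfies $J(e^{z_i})=\cosh(z_i)-1$. Thus the left-hand side equals $\sum_{i=1}^n(\cosh(z_i)-1)$.

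Next I apply Lemma~\ref{lem:coshcoercive} to each coordinate, which gives $\cosh(z_i)-1\ge z_i^2/2$. Summing over $i$ yields
\[
\sum_{i=1}^n J\!\left(e^{P(y)_i}\right)\ \ge\ \frac12\sum_{i=1}^n z_i^2=\frac12\norm{P(y)}_2^2 .
\]
By Definition~\ref{def:defect}, this last quantity is exactly $\tfrac12\,\mathrm{Def}(x)^2$. This proves \eqref{eq:coercive}.

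For the ``in particular'' clause, suppose the left-hand side vanishes. Then $0\ge\tfrac12\mathrm{Def}(x)^2\ge 0$, so $\mathrm{Def}(x)=0$. Alternatively, each summand $\cosh(z_i)-1$ is nonnegative, so all of them vanish. The equality case of Lemma~\ref{lem:coshcoercive}, or Lemma~\ref{lem:logform}, then forces $z_i=0$ for every $i$.

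There is no genuine obstacle here. The only point needing care is bookkeeping: the cost is evaluated at the projected vector $e^{P(y)}$ rather than at $x$ itself, so the bound is applied to $z=P(y)$ and not to $y$. No conservation hypothesis is needed for this statement.
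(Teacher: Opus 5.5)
Your proposal is correct and follows essentially the same route as the paper's proof. Both rewrite each summand as $\cosh(P(y)_i)-1$ via Lemma~\ref{lem:logform}, apply Lemma~\ref{lem:coshcoercive} termwise and sum, then read off the vanishing case from termwise nonnegativity (your direct deduction from the inequality works equally well).
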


\begin{proof}
By Lemma~\ref{lem:logform}, $J(e^{P(y)_i})=\cosh(P(y)_i)-1$.
Apply Lemma~\ref{lem:coshcoercive} termwise and sum over $i$:
\[
\sum_i J(e^{P(y)_i})=\sum_i(\cosh(P(y)_i)-1)\ \geq\ \sum_i \frac{P(y)_i^2}{2}
=\frac12 \norm{P(y)}_2^2.
\]
If the left-hand side equals $0$, then each term equals $0$, hence each $P(y)_i=0$ and $\mathrm{Def}(x)=0$.
\end{proof}

\subsection{\texorpdfstring{Finite-data aggregation: $W$-block window measurements}{Finite-data aggregation: W-block window measurements}}\label{sec:aggregation}

\begin{remark}\label{rem:Wparam}
All arguments in Sections~\ref{sec:coercivity}-\ref{subsec:domination} apply verbatim for any fixed
integer $W\geq 1$ once the corresponding measurement operator is specified.
In the examples we keep the concrete choice $W=8$.
\end{remark}

\subsection{\texorpdfstring{Window operator at resolution $W$}{Window operator at resolution W}}
Fix an integer $W\ge 1$ (e.g.\ $W=8$). For a real sequence $y=(y_0,y_1,\dots)$ define its \emph{$W$-block window sums}
\begin{equation}\label{eq:window}
\mathcal{W}(y)_k \;:=\; \sum_{j=0}^{W-1} y_{(Wk+j)},\qquad k=0,1,2,\dots.
\end{equation}
For brevity we sometimes write $W_k:=\mathcal W(y)_k$ for the window sums; this notation should not be confused with the fixed window \emph{length} $W$.
equivalently, $\mathcal{W}(y)$ is obtained by partitioning the time axis into consecutive blocks
of length $W$ and summing within each block (e.g.\ $W=8$).
\begin{remark}
The choice of fixed $W$ is treated as part of the finite-resolution model (C3); $W=8$ is used only as a concrete running example. The analysis requires only that the block length is a fixed positive integer.
\end{remark}

\subsection{The rational (finite-state) class and the degree parameter}
\label{subsec:rational-class}
Let $y=(y_n)_{n\geq 0}$ be a real sequence with generating function
\[
\theta(z):=\sum_{n\geq 0} y_n z^n.
\]
We say that $y$ is \emph{rational of degree at most $d$} if $\theta(z)=p(z)/q(z)$ where
$\deg p\le d$, $\deg q\le d$, and $q(0)\neq 0$; we then normalise scale so that $q(0)=1$.
Write
\[
q(z)=1+q_1 z+q_2 z^2+\cdots+q_d z^d,
\]
allowing $q_d=0$ when $\deg q<d$ (padding to length $d$). In this setting, $d$ counts the number of
free recurrence coefficients (equivalently, the McMillan degree / state dimension of a minimal
linear realisation).

\begin{lemma}\label{lem:recurrence}
If $\theta(z)=p(z)/q(z)$ with $q(0)\neq 0$ (so a Taylor expansion at $0$ exists) and after normalisation $q(0)=1$ and $q(z)=1+q_1 z+\cdots+q_d z^d$, then the coefficients
$y_n$ satisfy the linear recurrence
\begin{equation}\label{eq:recurrence}
y_n + q_1 y_{n-1}+q_2 y_{n-2}+\cdots+q_d y_{n-d}=0,\qquad n\ge d+1.
\end{equation}
Conversely, any sequence satisfying \eqref{eq:recurrence} for some $(q_1,\dots,q_d)$ is determined by the
parameters $(q_1,\dots,q_d)$ and the initial values $(y_0,\dots,y_d)$, hence by $2d+1$ real numbers.
\end{lemma}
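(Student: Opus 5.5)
The plan is to compare coefficients in the identity $q(z)\theta(z)=p(z)$ as formal power series. Since $q(0)=1\neq 0$, $q$ is invertible in $\R[[z]]$, so the Taylor expansion $\theta(z)=\sum_{n\ge0}y_nz^n$ is well defined and $q(z)\theta(z)=p(z)$ holds as an identity of formal power series. Set $q_0:=1$ and $q_j:=0$ for $j>d$. The coefficient of $z^n$ on the left is then the convolution $\sum_{j=0}^{\min(n,d)} q_j y_{n-j}$.

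For $n\ge d+1$ we have $\min(n,d)=d$. Moreover, the coefficient of $z^n$ in $p$ vanishes because $\deg p\le d<n$. Equating coefficients therefore gives exactly
\[
y_n+q_1y_{n-1}+\cdots+q_dy_{n-d}=0,
\]
which is \eqref{eq:recurrence}. This works for every $n\ge d+1$ (in fact it already holds for $n>\deg p$, but the stated range is all that is needed).

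For the converse, suppose $(q_1,\dots,q_d)$ and $(y_0,\dots,y_d)$ are given and $y$ satisfies \eqref{eq:recurrence}. We prove by strong induction on $n\ge d+1$ that $y_n$ is determined. The recurrence rewrites as
\[
y_n=-\sum_{j=1}^d q_j y_{n-j},
\]
and each $y_{n-j}$ on the right has index $n-j\in\{n-d,\dots,n-1\}$. Every such index is either among the initial values $0,\dots,d$ or has been handled at an earlier stage of the induction. Hence the whole sequence is a function of the $d+(d+1)=2d+1$ real numbers $(q_1,\dots,q_d,y_0,\dots,y_d)$.

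If one also wants the converse to return to the rational class, define $p(z):=q(z)\theta(z)$. By the computation in the first step, its coefficients vanish for $n\ge d+1$, so $p$ is a polynomial of degree at most $d$ and $\theta=p/q$ is rational of degree at most $d$.

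There is no real obstacle here. The only points needing care are the bookkeeping at the boundary, namely the padding $q_j=0$ for $j>d$ and $q_d=0$ when $\deg q<d$, and the justification that $q(0)\neq0$ makes the Taylor expansion and the formal identity $q\theta=p$ legitimate. Invertibility of $q$ in $\R[[z]]$, or analyticity near $0$, settles the latter.
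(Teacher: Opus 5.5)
Your proof is correct and follows the paper's argument: compare coefficients of $z^n$ in $q(z)\theta(z)=p(z)$ for $n\ge d+1$ using $\deg p\le d$, and for the converse determine each $y_n$ recursively from $(q_1,\dots,q_d)$ and $(y_0,\dots,y_d)$. Your closing observation that $p:=q\theta$ then has degree at most $d$, so the converse lands back in the rational class, is a small addition the paper does not make. Your parenthetical claim that the recurrence already holds for $n>\deg p$ needs the convention $y_m=0$ for $m<0$, but nothing depends on it.
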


\begin{proof}

Since $\theta(z)=p(z)/q(z)$ and $q(0)=1$, we have the identity
\[
q(z)\,\theta(z)=p(z).
\]
Write $q(z)=1+q_1 z+\cdots+q_d z^d$ and $\theta(z)=\sum_{n\ge 0} y_n z^n$.
Comparing coefficients of $z^n$ for $n\ge d+1$ gives
\[
y_n+q_1y_{n-1}+\cdots+q_dy_{n-d}=0,
\]
because $\deg p\le d$ implies the $z^n$ coefficient of $p$ is $0$ for $n\ge d+1$.

Conversely, if \eqref{eq:recurrence} holds for fixed $(q_1,\dots,q_d)$, then each $y_n$ with $n\ge d+1$ is determined recursively from $(y_0,\dots,y_d)$. Hence the full sequence is determined by $(q_1,\dots,q_d,y_0,\dots,y_d)$, i.e.\ by $2d+1$ real parameters.

\end{proof}

\subsection{\texorpdfstring{Unique determination from $K\ge 2d+1$ window sums}{Unique determination from K>=2d+1 window sums}}\label{subsec:window-uniq}
We now fill in the self-contained ``unique determination from window data'' argument in a form
that does \emph{not} rely on external machine-verification tooling. Throughout this subsection we fix integers
\[
d\ge 1,\qquad W\ge 1,\qquad K\ge 2d+1.
\]
Let $\theta(z)=\sum_{n\ge 0}y_n z^n$ be a rational signal of degree $\le d$ with representation
$\theta(z)=p(z)/q(z)$ where $\deg p\le d$, $\deg q\le d$, and we normalise $q(0)=1$. Write
\[
q(z)=1+q_1 z+\cdots+q_d z^d.
\]
By Lemma~\ref{lem:recurrence}, once the parameter vector
\[
\pi:=(y_0,\dots,y_d,q_1,\dots,q_d)\in\R^{2d+1}
\]
is fixed, the recurrence \eqref{eq:recurrence} determines $y_n$ for all $n\ge d+1$.

Recall the $W$-block window sums $\mathcal W(y)_k$ from \eqref{eq:window}, and define the truncated window map (using the first $2d+1$ windows)
\[
F_{d,W}:\R^{2d+1}\to\R^{2d+1},\qquad
F_{d,W}(\pi):=\bigl(\mathcal W(y)_0,\dots,\mathcal W(y)_{2d}\bigr),
\]
where $y$ is generated from $\pi$ by \eqref{eq:recurrence}.
Here $D F_{d,W}(\pi)$ denotes the Jacobian matrix of $F_{d,W}$ at $\pi$.

\begin{lemma}\label{lem:window-generic-local}
The map $F_{d,W}$ is polynomial in $\pi$.
Indeed, each $y_n$ is obtained from $(y_0,\dots,y_d,q_1,\dots,q_d)$ by iterating the recurrence \eqref{eq:recurrence}, so $y_n$ is a polynomial function of $\pi$ for every fixed $n$; each window sum is a finite linear combination of such $y_n$, hence $F_{d,W}$ is polynomial. If there exists \emph{one} parameter point $\pi_0$
with $\det DF_{d,W}(\pi_0)\neq 0$, then
\[
\Omega_{d,W}:=\{\pi\in\R^{2d+1}:\det DF_{d,W}(\pi)\neq 0\}
\]
is a nonempty Zariski-open (hence open dense) subset of $\R^{2d+1}$, and on $\Omega_{d,W}$
the first $2d+1$ window sums determine $\pi$ \emph{locally uniquely} (i.e.\ $F_{d,W}$ is locally
one-to-one).
\end{lemma}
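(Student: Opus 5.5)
We prove the three assertions in order: polynomiality of $F_{d,W}$, nonemptiness and Zariski openness (hence density) of $\Omega_{d,W}$ given one good point, and local injectivity on $\Omega_{d,W}$.

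\textbf{Step 1: polynomiality.} We argue by strong induction on $n$. For $0\le n\le d$, $y_n$ is a coordinate of $\pi$, hence a polynomial. For $n\ge d+1$, the recurrence \eqref{eq:recurrence} gives
\[
y_n=-\sum_{j=1}^d q_j\,y_{n-j}.
\]
Each $q_j$ is a coordinate of $\pi$, and each $y_{n-j}$ is a polynomial by the induction hypothesis, so $y_n$ is a polynomial in $\pi$. Its degree grows at most linearly in $n$, though we do not need this. The $k$-th window sum $\mathcal W(y)_k$ is a finite sum of the $y_{Wk+j}$ by \eqref{eq:window}, so each component of $F_{d,W}$ is a polynomial. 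In particular, the entries of $DF_{d,W}$ are polynomials, and so $g(\pi):=\det DF_{d,W}(\pi)$ is a polynomial on $\R^{2d+1}$.

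\textbf{Step 2: openness and density.} By definition $\Omega_{d,W}=\{g\neq 0\}$, the complement of the real zero set of a polynomial, so it is Zariski open. By hypothesis $g(\pi_0)\neq 0$, so $g$ is not the zero polynomial and $\Omega_{d,W}$ is nonempty. For density we use the standard fact that a nonzero real polynomial cannot vanish on a nonempty Euclidean open set. If it did, all of its Taylor coefficients at an interior point would vanish, making it identically zero. Hence $\{g=0\}$ has empty interior and $\Omega_{d,W}$ is dense. Openness in the Euclidean topology follows from continuity of $g$. Alternatively, $\{g=0\}$ has Lebesgue measure zero, by induction on the number of variables together with Fubini, which again gives density.

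\textbf{Step 3: local injectivity.} $F_{d,W}$ is $C^\infty$, being polynomial, and maps $\R^{2d+1}$ to the space of the same dimension. At any $\pi\in\Omega_{d,W}$ its Jacobian is invertible. The inverse function theorem therefore gives a neighborhood $U\ni\pi$ such that $F_{d,W}|_U$ is a diffeomorphism onto an open set. In particular $F_{d,W}|_U$ is injective, so the first $2d+1$ window sums determine $\pi$ uniquely within $U$. Extra windows, when $K>2d+1$, only add constraints and do not affect this conclusion.

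The only potentially delicate point is the density claim in Step 2, and it reduces to the elementary fact that a nonzero polynomial has nowhere dense zero set. The substantive input, the existence of $\pi_0$, is assumed in the statement rather than proved here.
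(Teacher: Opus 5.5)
Your proposal is correct and follows the same route as the paper's proof. Both establish polynomiality by iterating the recurrence, obtain Zariski-openness and nonemptiness from the single witness $\pi_0$, and get local injectivity from the inverse function theorem. You additionally justify the density claim (a nonzero polynomial cannot vanish on a nonempty open set), which the paper only asserts with ``hence open dense''.
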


\begin{proof}
Each coefficient $y_n$ is obtained by iterating the linear recurrence \eqref{eq:recurrence} whose
update rule is polynomial in $(q_1,\dots,q_d)$ and linear in $(y_0,\dots,y_d)$. Hence each
$\mathcal W(y)_k$ is polynomial in $\pi$, so $F_{d,W}$ is polynomial and $\det DF_{d,W}$ is a
polynomial function. If $\det DF_{d,W}(\pi_0)\neq 0$ for some $\pi_0$, then the polynomial
$\det DF_{d,W}$ is not identically zero, so its nonvanishing locus $\Omega_{d,W}$ is Zariski-open
and nonempty. For $\pi\in\Omega_{d,W}$, the inverse function theorem gives local invertibility,
hence local uniqueness of $\pi$ from the first $2d+1$ windows.
\end{proof}

\begin{theorem}\label{thm:generic-full-rank}
Fix integers $d\ge 1$ and $W\ge 1$. Assume there exists a parameter point $\pi_0$ such that $\det DF_{d,W}(\pi_0)\neq 0$. Then the nonvanishing locus
\[
\Omega_{d,W}:=\{\pi:\det DF_{d,W}(\pi)\neq 0\}
\]
is nonempty and Zariski-open (hence open dense) in $\R^{2d+1}$, and for every $\pi\in\Omega_{d,W}$ the window map $F_{d,W}$ is locally invertible at $\pi$. Moreover, to certify the hypothesis it suffices to exhibit an integer point $\pi_0\in\Z^{2d+1}$ and a prime $p$ for which $\det DF_{d,W}(\pi_0)\not\equiv 0\pmod p$.
\end{theorem}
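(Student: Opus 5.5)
The first two assertions follow from Lemma~\ref{lem:window-generic-local}, so the plan is to reduce to that lemma and then prove the modular certification criterion separately. I will first recall why $F_{d,W}$ is polynomial. By Lemma~\ref{lem:recurrence}, each $y_n$ with $n\ge d+1$ is obtained by iterating $y_n=-\sum_{j=1}^d q_j y_{n-j}$. Induction on $n$ then shows that $y_n\in\Z[\pi]$, i.e.\ each $y_n$ is a polynomial in $\pi$ with \emph{integer} coefficients. Each window sum $\mathcal W(y)_k$ is an integer linear combination of finitely many $y_n$ (those with $n\le W(2d+1)-1$). So every entry of $DF_{d,W}$, and hence $D(\pi):=\det DF_{d,W}(\pi)$, lies in $\Z[\pi]$. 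This integrality is the point I will emphasize, since it is what the mod $p$ criterion needs.

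Next, nonemptiness, openness and density. The hypothesis gives $D(\pi_0)\neq0$, so $D$ is a nonzero polynomial. Its nonvanishing set $\Omega_{d,W}$ is therefore Zariski-open and nonempty. It is open in the Euclidean topology because $D$ is continuous. For density I will argue as follows: a nonzero real polynomial cannot vanish on a nonempty open set, since otherwise all its Taylor coefficients at an interior point would vanish. Hence the zero set has empty interior and its complement is dense. Local invertibility at each $\pi\in\Omega_{d,W}$ is then the inverse function theorem applied to the smooth (polynomial) map $F_{d,W}$, exactly as in Lemma~\ref{lem:window-generic-local}.

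Finally, the certification clause. Suppose $\pi_0\in\Z^{2d+1}$ and $p$ is prime with $D(\pi_0)\not\equiv0\pmod p$. Because $D\in\Z[\pi]$, the value $D(\pi_0)$ is an integer. Reduction modulo $p$ is a ring homomorphism $\Z\to\mathbb F_p$ that commutes with evaluating integer polynomials. So $D(\pi_0)\bmod p$ is the determinant of the entrywise reduced Jacobian, which is what one would compute in exact modular arithmetic. If $D(\pi_0)$ were $0$, its residue would be $0$; contrapositively, $D(\pi_0)\neq0$. This gives the hypothesis of the first part.

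I expect the main obstacle to be only a point of care: justifying the integrality claim $D\in\Z[\pi]$. Integrality is automatic here because of the normalization $q(0)=1$, which means the recurrence never divides. Once integrality holds, every step is standard.
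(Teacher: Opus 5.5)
Your proposal is correct and follows the paper's proof. Polynomiality of $\det DF_{d,W}$ (Lemma~\ref{lem:window-generic-local}) makes the locus nonempty and Zariski-open, the inverse function theorem gives local invertibility, and a nonzero residue mod $p$ of the integer $\det DF_{d,W}(\pi_0)$ forces that determinant to be nonzero. Your explicit check that $\det DF_{d,W}\in\Z[\pi]$ (the recurrence never divides, since $q(0)=1$) and your argument for Euclidean density fill in two points the paper's proof leaves implicit.
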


\begin{proof}
By Lemma~\ref{lem:window-generic-local}, $\det DF_{d,W}$ is a polynomial in the coordinates of $\pi$. If $\det DF_{d,W}(\pi_0)\neq 0$, then this polynomial is not identically zero, so $\Omega_{d,W}$ is Zariski-open and nonempty; the inverse function theorem yields local invertibility at each $\pi\in\Omega_{d,W}$. For the finite-field certificate: if $\pi_0\in\Z^{2d+1}$ and $\det DF_{d,W}(\pi_0)\not\equiv 0\pmod p$, then the integer $\det DF_{d,W}(\pi_0)$ is not divisible by $p$, hence is nonzero over $\Z$, and therefore nonzero as a real number.
\end{proof}

\begin{remark}
Theorem~\ref{thm:generic-full-rank} reduces generic full-rank (and hence local identifiability) for a given $(d,W)$ to finding \emph{one} explicit witness point. In practice, such witnesses can often be found by a brief randomized integer search, and then certified by a modular determinant computation as above.
\end{remark}

\begin{lemma}\label{lem:witness-38}
For $d=3$ and $W=8$, the set $\Omega_{3,8}$ from Lemma~\ref{lem:window-generic-local} is nonempty. In particular, this verifies the hypothesis of Theorem~\ref{thm:generic-full-rank} for $(d,W)=(3,8)$.
In particular, for the parameter point
\[
\pi_0=(y_0,y_1,y_2,y_3,q_1,q_2,q_3)=(1,1,5,1,2,2,-2)
\]
one has $\det DF_{3,8}(\pi_0)\neq 0$.
Consequently $\Omega_{3,8}$ is Zariski-open (hence open dense) in $\R^{7}$, and the first $2d+1=7$
window sums determine $\pi$ locally uniquely throughout $\Omega_{3,8}$.
\end{lemma}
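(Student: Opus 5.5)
The claim is a finite computation, so the plan is to evaluate $\det DF_{3,8}(\pi_0)$ exactly in integer arithmetic. The only real work is organizing that computation so it can be checked.

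\emph{Generating the sequence.} For $d=3$, $W=8$ and $2d+1=7$ windows, we need $y_0,\dots,y_{55}$. At $\pi_0=(1,1,5,1,2,2,-2)$ the recurrence \eqref{eq:recurrence} reads $y_n=-2y_{n-1}-2y_{n-2}+2y_{n-3}$ for $n\ge 4$, starting from $(y_0,y_1,y_2,y_3)=(1,1,5,1)$. All entries are integers, so every window sum $\mathcal W(y)_k=\sum_{j=0}^{7}y_{8k+j}$, $k=0,\dots,6$, is an integer.

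\emph{Differentiating.} I would differentiate the recurrence itself rather than expanding the polynomials of Lemma~\ref{lem:window-generic-local}. Write $u^{(m)}_n:=\partial y_n/\partial \pi_m$.
\begin{itemize}
\item For an initial-value coordinate $\pi_m=y_j$ ($0\le j\le 3$): $u^{(m)}_n=\delta_{nj}$ for $n\le 3$, and for $n\ge 4$ it satisfies the same homogeneous recurrence.
\item For a recurrence coefficient $\pi_m=q_i$ ($1\le i\le 3$): $u^{(m)}_n=0$ for $n\le 3$, and for $n\ge 4$
\[
u^{(m)}_n=-2u^{(m)}_{n-1}-2u^{(m)}_{n-2}+2u^{(m)}_{n-3}-y_{n-i}.
\]
\end{itemize}
Each such sequence is an integer sequence computed by a simple linear pass. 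Column $m$ of $DF_{3,8}(\pi_0)$ is then $\bigl(\sum_{j=0}^{7}u^{(m)}_{8k+j}\bigr)_{k=0}^{6}$, which gives an explicit $7\times 7$ integer matrix.

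\emph{Certifying nonvanishing.} Following Theorem~\ref{thm:generic-full-rank}, I would not compute the full integer determinant. Instead, reduce the matrix modulo a moderate prime $p$ and run Gaussian elimination over $\mathbb F_p$. A nonzero result gives $\det DF_{3,8}(\pi_0)\ne 0$ over $\Z$, hence over $\R$. All remaining conclusions then follow directly from Lemma~\ref{lem:window-generic-local} and Theorem~\ref{thm:generic-full-rank}:
\begin{itemize}
\item $\Omega_{3,8}$ is nonempty and Zariski-open, hence open dense in $\R^7$;
\item by the inverse function theorem, the first seven window sums determine $\pi$ locally uniquely on $\Omega_{3,8}$.
\end{itemize}

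\emph{Main obstacle.} The difficulty is numerical growth. The characteristic polynomial $\lambda^3+2\lambda^2+2\lambda-2$ has complex roots of modulus greater than $1$, so $y_n$ and the $u^{(m)}_n$ grow exponentially up to index $55$, and the matrix entries become very large. Doing the whole computation modulo $p$ from the start avoids this, since the recurrences are integral. The trade-off is that one might choose an unlucky prime $p$ that happens to divide the determinant. In that case I would repeat the computation with a second prime. Alternatively, computing the exact integer determinant with a CAS and checking that it is nonzero settles the claim.

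A weaker structural route would only show that $\Omega_{3,8}$ is nonempty, without confirming this particular $\pi_0$. At $q=0$, so $y_n=0$ for $n\ge 4$, the derivative columns have sparse structure, and one could try to get a block-triangular Jacobian. In this simple case, though, only window $0$ sees the initial values, so that degenerate point is likely singular. This is why the explicit witness computation is the intended argument.
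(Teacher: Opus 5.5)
Your plan is the same as the paper's proof: it also propagates the derivative sequences $u^{(\alpha)}_n$ through the differentiated recurrence, assembles the $7\times 7$ integer Jacobian with entries $\sum_{j=0}^{7}u^{(\alpha)}_{8k+j}$, and certifies nonvanishing by Gaussian elimination over $\mathbb{F}_p$. The paper additionally carries the computation out, with $p=10^9+7$ giving $\det DF_{3,8}(\pi_0)\equiv 972226939 \pmod p$; your argument is complete once that computation, or an exact integer determinant, is actually reported.
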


\begin{proof}
We certify $\det DF_{3,8}(\pi_0)\neq 0$ by a finite check in a prime field.
Let $p:=10^9+7$ and work in $\mathbb{F}_p$ throughout.
Using the recurrence \eqref{eq:recurrence} at $\pi=\pi_0$, compute $y_n$ up to $n=55$ and hence the first $K=7$ window sums
\[
\begin{aligned}
(W_0,\dots,W_6)=(&-16,-5160,-975168,-169890432,\\
&-27959752704,-4399334572032,-665805326548992).
\end{aligned}
\]

Next propagate parameter derivatives through the same recurrence:
for each parameter $\alpha\in\{y_0,y_1,y_2,y_3,q_1,q_2,q_3\}$ set
$u^{(\alpha)}_n:=\partial y_n/\partial \alpha$ and differentiate
$y_n+q_1y_{n-1}+q_2y_{n-2}+q_3y_{n-3}=0$ $(n\ge 4)$ to obtain
\[
u^{(\alpha)}_n+q_1u^{(\alpha)}_{n-1}+q_2u^{(\alpha)}_{n-2}+q_3u^{(\alpha)}_{n-3}
=-(\partial_\alpha q_1)y_{n-1}-(\partial_\alpha q_2)y_{n-2}-(\partial_\alpha q_3)y_{n-3}.
\]
These recurrences determine all $u^{(\alpha)}_n$ needed for the window derivatives
$\partial_\alpha W_k=\sum_{j=0}^{7}u^{(\alpha)}_{8k+j}$.
Assemble the $7\times 7$ Jacobian $DF_{3,8}(\pi_0)$ with entries $\partial_\alpha W_k$.
(The full integer Jacobian matrix is recorded in Appendix~\ref{app:jacobian-38}; here we retain only the determinant residue modulo $p$.)

Compute the determinant in $\mathbb{F}_p$ by Gaussian elimination. The result is
\begin{equation*}
\det\bigl(DF_{3,8}(\pi_0)\bigr)\equiv 972226939\pmod{p}.
\end{equation*}
, which is nonzero in $\mathbb{F}_p$. Hence $\det DF_{3,8}(\pi_0)\neq 0$ over the integers.
\end{proof}

\begin{remark}\label{rem:witness-points}
For any fixed pair $(d,W)$, the determinant $\det DF_{d,W}(\pi)$ is a polynomial function of the
parameter vector $\pi$ (with coefficients depending only on $(d,W)$). Hence the full-rank locus
\[
\Omega_{d,W}:=\{\pi:\det DF_{d,W}(\pi)\neq 0\}
\]
is either empty or Zariski open (and therefore Euclidean open and dense) in the ambient parameter space.
To certify nonemptiness in the present paper (working entirely within the present manuscript),
it suffices to exhibit a single explicit witness $\pi_0$ with $\det DF_{d,W}(\pi_0)\neq 0$.
Lemma~\ref{lem:witness-38} provides such a witness for the operating example $(d,W)=(3,8)$.
Additional witnesses for other $(d,W)$ pairs can be recorded in the same way.
\end{remark}

\begin{table}[t]
	\centering
	\begin{tabular}{lll}
		\hline
		$(d,W)$ & witness $\pi_0$ & certificate of full rank \\
		\hline \\
		$(3,8)$ & $(1,1,5,1,2,2,-2)$ & Lemma~\ref{lem:witness-38} \\
		\hline
	\end{tabular}
	\begingroup
	\captionsetup{font=footnotesize}
	\caption{Example full-rank witness points. Further rows can be added by exhibiting any $\pi_0$ with $\det DF_{d,W}(\pi_0)\neq 0$.}
	\endgroup
\end{table}

\begin{theorem}\label{thm:window-ident-general}
Assume $\Omega_{d,W}$ is nonempty (equivalently, $\det DF_{d,W}$ is not the zero polynomial).
Then for every $\pi\in\Omega_{d,W}$ the first $K$ window sums $\bigl(\mathcal W(y)_0,\dots,\mathcal W(y)_{K-1}\bigr)$
determine $\pi$ locally uniquely within the degree-$\le d$ rational class. In particular, the
degree-$\le d$ signal $\theta$ and the full coefficient sequence $(y_n)_{n\ge 0}$ are locally
uniquely determined on $\Omega_{d,W}$ by $K\ge 2d+1$ windows.
\end{theorem}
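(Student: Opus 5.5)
The plan is to reduce the statement to the square case $K=2d+1$ already handled by Lemma~\ref{lem:window-generic-local} and Theorem~\ref{thm:generic-full-rank}. Then we transfer local uniqueness from the parameter vector $\pi$ to the full sequence via Lemma~\ref{lem:recurrence}.

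\textbf{Step 1: reduction to the square case.} For $K\ge 2d+1$, consider the extended map
\[
G_{K}:\R^{2d+1}\to\R^{K},\qquad G_K(\pi):=\bigl(\mathcal W(y)_0,\dots,\mathcal W(y)_{K-1}\bigr).
\]
It is polynomial in $\pi$ by the same argument as in Lemma~\ref{lem:window-generic-local}. Its first $2d+1$ coordinates are exactly $F_{d,W}$, so $G_K=\iota\circ\,$ where we write $G_K=(F_{d,W},R_K)$ with $R_K$ the remaining windows. Hence, if $G_K(\pi)=G_K(\pi')$, then in particular $F_{d,W}(\pi)=F_{d,W}(\pi')$. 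Local injectivity of $F_{d,W}$ on a neighborhood therefore implies local injectivity of $G_K$ on the same neighborhood. Equivalently, $DG_K(\pi)$ contains the invertible $(2d+1)\times(2d+1)$ block $DF_{d,W}(\pi)$, so it has full column rank $2d+1$ on $\Omega_{d,W}$.

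\textbf{Step 2: local uniqueness of $\pi$.} Fix $\pi\in\Omega_{d,W}$. Since $\det DF_{d,W}(\pi)\ne0$, the inverse function theorem, as invoked in Lemma~\ref{lem:window-generic-local}, gives an open neighborhood $U\ni\pi$ on which $F_{d,W}$ is injective. By Step 1, $G_K$ is injective on $U$, so the first $K$ windows determine $\pi$ uniquely among parameters in $U$. Because $\Omega_{d,W}$ is open, we may also shrink $U$ to lie inside $\Omega_{d,W}$ if desired.

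\textbf{Step 3: transfer to $\theta$ and $(y_n)$.} By the converse part of Lemma~\ref{lem:recurrence}, $\pi$ determines every $y_n$ through the recurrence \eqref{eq:recurrence}. Hence it determines $\theta$, and also $q$ together with $p=q\theta$ truncated at degree $d$. So the map $\pi\mapsto (y_n)_{n\ge0}$ is well defined, and local uniqueness of $\pi$ on $U$ yields local uniqueness of the signal and of its coefficient sequence.

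\textbf{Main obstacle.} The delicate point is interpreting ``locally uniquely within the degree-$\le d$ rational class.'' Uniqueness is meant in the parametrization by $\pi$, i.e.\ among signals whose parameters lie in $U$. Distinct $\pi$ could in principle give the same sequence, for instance through non-minimal representations with common factors. That is harmless here, since we only claim that nearby parameters with equal windows coincide. I would state this explicitly rather than attempt a neighborhood statement in sequence space.
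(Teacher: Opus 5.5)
Your proof is correct and follows essentially the same route as the paper. The paper also applies the inverse function theorem to $F_{d,W}$ at $\pi\in\Omega_{d,W}$, then uses that $F_{d,W}$ is the coordinate projection $\R^{K}\to\R^{2d+1}$ composed with $G_K$, so equal $K$-window data forces equal first $2d+1$ windows and hence $\pi'=\pi$ nearby. Your Step~3, which passes from $\pi$ to $\theta$ and $(y_n)$ via Lemma~\ref{lem:recurrence}, spells out the ``in particular'' clause that the paper leaves implicit, and your caveat that uniqueness is meant in parameter space is appropriate. Do repair the unfinished expression involving $\iota$ in Step~1.
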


\begin{proof}
Fix $\pi\in\Omega_{d,W}$. By Lemma~\ref{lem:window-generic-local} the map
\[
F_{d,W}:\R^{2d+1}\to\R^{2d+1},\qquad
F_{d,W}(\pi)=\bigl(\mathcal W(y)_0,\dots,\mathcal W(y)_{2d}\bigr)
\]
has nonsingular Jacobian at $\pi$, hence is locally injective near $\pi$ (Inverse Function Theorem).
Now let $K\ge 2d+1$ and define
\[
G_{K}(\pi):=\bigl(\mathcal W(y)_0,\dots,\mathcal W(y)_{K-1}\bigr)\in\R^{K}.
\]
Let $\mathrm{pr}:\R^{K}\to\R^{2d+1}$ be the coordinate projection onto the first $2d+1$ components.
Then $\mathrm{pr}\circ G_{K}=F_{d,W}$. Therefore, if $G_{K}(\pi')=G_{K}(\pi)$ for some $\pi'$,
then $F_{d,W}(\pi')=F_{d,W}(\pi)$. Taking $\pi'$ sufficiently close to $\pi$, local injectivity of
$F_{d,W}$ forces $\pi'=\pi$. Hence the first $K$ windows determine $\pi$ locally uniquely on
$\Omega_{d,W}$ for every $K\ge 2d+1$.
\end{proof}

\begin{remark}\label{rem:window-global}
Theorem~\ref{thm:window-ident-general} is the strongest statement that follows from a Jacobian
(non-singularity) argument alone: it guarantees \emph{local} identifiability on an open dense set.
A \emph{global} uniqueness theorem (one-to-one on a parameter locus) requires additional structure or
additional observables beyond window sums to rule out distinct far-apart preimages with the same window data.
\end{remark}

\begin{theorem}\label{thm:window-ident-global}
Let $S_k:=\mathcal W(y)_k$ be the window-sum sequence for a degree-$\le d$ rational signal.
Assume that $S_k$ admits an exponential-sum representation
\[
S_k=\sum_{i=1}^d A_i\,\mu_i^k
\]
\noindent\textbf{(H-prony)}\; Assume the exponents $\mu_1,\dots,\mu_d$ are distinct and nonzero, the amplitudes $A_i$ are nonzero, and the $d\times d$ Hankel matrix $H:=(S_{i+j})_{i,j=0}^{d-1}$ is invertible. (This is a standard nondegeneracy / local invertibility condition for Prony-type reconstruction.)
Then the data $(S_0,\dots,S_{2d-1})$ determine uniquely:
\begin{enumerate}
\item the monic degree-$d$ characteristic polynomial $\chi(t)=\prod_{i=1}^d(t-\mu_i)$ of the order-$d$ recurrence satisfied by $(S_k)$, hence the multiset $\{\mu_i\}_{i=1}^d$;
\item the amplitudes $(A_i)_{i=1}^d$, hence the entire sequence $(S_k)_{k\ge 0}$.
\end{enumerate}
Equivalently, the collection of parameter pairs $(\mu_i,A_i)_{i=1}^d$ is determined up to a common permutation of indices. If one fixes any deterministic ordering convention on the distinct exponents (for example, ordering by $|\mu_i|$ when these moduli are pairwise distinct), then the ordered tuple $(\mu_1,\dots,\mu_d,A_1,\dots,A_d)$ is uniquely determined.

Consequently, for $K\ge 2d$ the first $K$ window sums determine the induced window-sum process uniquely on this non-degenerate locus.
\end{theorem}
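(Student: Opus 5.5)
The proof will be the classical Prony argument in three steps. The exponential-sum model gives a recurrence, the Hankel hypothesis makes its coefficients unique, and a Vandermonde system recovers the amplitudes.

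\textbf{Step 1: recurrence.} Write $\chi(t)=\prod_{i=1}^d(t-\mu_i)=t^d+c_{d-1}t^{d-1}+\cdots+c_0$. Since $\chi(\mu_i)=0$ for every $i$, substituting $S_k=\sum_i A_i\mu_i^k$ gives
\[
S_{k+d}+c_{d-1}S_{k+d-1}+\cdots+c_0S_k=\sum_i A_i\mu_i^k\chi(\mu_i)=0\qquad\text{for all }k\ge 0 .
\]
For $k=0,\dots,d-1$ these are $d$ linear equations in $c=(c_0,\dots,c_{d-1})^{\top}$. They read $Hc=-b$, where $H=(S_{i+j})_{i,j=0}^{d-1}$ and $b=(S_d,\dots,S_{2d-1})^{\top}$. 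Only the data $S_0,\dots,S_{2d-1}$ enter this system.

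\textbf{Step 2: uniqueness of $\chi$.} Because $H$ is invertible by (H-prony), $c=-H^{-1}b$ is a function of $(S_0,\dots,S_{2d-1})$ alone.

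The point to handle carefully is that the model is not assumed known in advance. Suppose a second admissible representation $\sum_{i=1}^d A_i'\mu_i'^k$ produces the same first $2d$ values. Its characteristic polynomial $\chi'$ also has coefficients satisfying $Hc'=-b$ with the same $H$ and $b$. Hence $c'=c$, so $\chi'=\chi$, and the roots agree as a multiset: $\{\mu_i'\}=\{\mu_i\}$.

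Invertibility of $H$ is exactly what excludes a lower-order recurrence that could make $c$ non-unique. Indeed $H=V D V^{\top}$ with $V=(\mu_j^i)$ Vandermonde and $D=\mathrm{diag}(A_i)$. Distinct $\mu_i$ and nonzero $A_i$ make this factorization nonsingular, so the assumption is consistent with the model. It is this factorization that carries the "distinct, nonzero" hypotheses.

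\textbf{Step 3: amplitudes and the full sequence.} Once the $\mu_i$ are known, the equations $S_k=\sum_i A_i\mu_i^k$ for $k=0,\dots,d-1$ form a Vandermonde system $V^{\top}A=(S_0,\dots,S_{d-1})^{\top}$. This system is invertible because the $\mu_i$ are distinct, so $A$ is unique given a labelling of the $\mu_i$.

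Relabelling the $\mu_i$ permutes the $A_i$ in the same way. This gives uniqueness of the pairs $(\mu_i,A_i)$ up to a common permutation. Fixing an ordering convention, such as ordering by $|\mu_i|$ when the moduli are distinct, then pins down the ordered tuple.

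The whole sequence $(S_k)_{k\ge0}$ is determined either from the closed form or from the recurrence of Step 1. For $K\ge 2d$ the first $K$ window sums contain $S_0,\dots,S_{2d-1}$. The remaining values must be consistent with the sequence already determined, so the induced window-sum process is unique on this locus.
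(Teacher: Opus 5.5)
Your proposal is correct and follows essentially the same route as the paper. The recurrence equations for $k=0,\dots,d-1$ form the Hankel system with matrix $H$, whose invertibility pins down the coefficients of $\chi$ from $(S_0,\dots,S_{2d-1})$; the Vandermonde system on $S_0,\dots,S_{d-1}$ then recovers the amplitudes. Your explicit comparison with a competing representation is a useful clarification that the paper leaves implicit, and so is the factorization $H=VDV^{\top}$, which shows that Hankel invertibility is in fact implied by distinct exponents and nonzero amplitudes, not merely consistent with them. One small slip: with your convention $V=(\mu_j^i)$ the amplitude system should read $VA=(S_0,\dots,S_{d-1})^{\top}$ rather than $V^{\top}A$.
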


\begin{proof}
The coefficients of $P(t)=t^d+a_1 t^{d-1}+\cdots+a_d$ solve the Hankel linear system
\[
\sum_{m=1}^d a_m\,S_{k+d-m}=-S_{k+d}\qquad (k=0,1,\dots,d-1),
\]
whose matrix is precisely $H=(S_{i+j})_{i,j=0}^{d-1}$. By hypothesis $H$ is invertible, hence
$(a_1,\dots,a_d)$ (and thus $P$) are uniquely determined from $(S_0,\dots,S_{2d-1})$.
Since the $\mu_i$ are distinct, the roots of $\chi$ are exactly $\{\mu_i\}_{i=1}^d$.
With $\mu_i$ known, the amplitudes $(A_i)$ are obtained uniquely from the Vandermonde system
\[
(S_0,\dots,S_{d-1})^\top = V(\mu)\,(A_1,\dots,A_d)^\top,
\quad V(\mu)_{k,i}=\mu_i^{\,k},
\]
whose determinant is $\prod_{i<j}(\mu_j-\mu_i)\neq 0$. This determines $(S_k)$ for all $k$.
\end{proof}

\begin{definition}[Local optimality on an identifiability locus]\label{def:local-optimal}
Fix a neighborhood $\mathcal{U}\subset\mathcal{M}_{d,W}$.
A sound procedure $\Psi$ is \emph{locally optimal on $\mathcal{U}$} if for every sound procedure $\Xi$ one has
\[
\operatorname{Res}(\Xi)\cap\mathcal{U}\subseteq \operatorname{Res}(\Psi)\cap\mathcal{U}
\]
and the outputs agree on every instance resolved by $\Xi$ in $\mathcal{U}$, i.e.\ for all $y\in \operatorname{Res}(\Xi)\cap\mathcal{U}$,
\[
\Psi(\mathcal{W}_{W,K}(y))=\Xi(\mathcal{W}_{W,K}(y)).
\]
\end{definition}

\begin{corollary}[Uniqueness among locally optimal rules]\label{cor:unique-local-optimal}
Under the hypotheses of Theorem~\ref{thm:domination}, the canonical procedure $\Phi^\ast$ is locally optimal on $\mathcal{U}_{d,W}$.
Moreover, if $\Psi$ is any locally optimal sound procedure on $\mathcal{U}_{d,W}$, then
\[
\operatorname{Res}(\Psi)\cap\mathcal{U}_{d,W}=\operatorname{Res}(\Phi^\ast)\cap\mathcal{U}_{d,W},
\]
and for every $y\in \operatorname{Res}(\Phi^\ast)\cap\mathcal{U}_{d,W}$ the outputs agree:
\[
\Psi(\mathcal{W}_{W,K}(y))=\Phi^\ast(\mathcal{W}_{W,K}(y)).
\]
\end{corollary}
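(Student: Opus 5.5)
The proof splits into two parts. First, the local optimality of $\Phi^\ast$ on $\mathcal{U}_{d,W}$ is essentially a restatement of Theorem~\ref{thm:domination}. Second, the uniqueness part follows from a two-way inclusion argument using Definition~\ref{def:local-optimal} twice.

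\textbf{Local optimality of $\Phi^\ast$.} Under the hypotheses of Theorem~\ref{thm:domination}, I expect that theorem to provide exactly two facts. For every sound procedure $\Xi$, we have $\operatorname{Res}(\Xi)\cap\mathcal{U}_{d,W}\subseteq\operatorname{Res}(\Phi^\ast)\cap\mathcal{U}_{d,W}$. Moreover, any sound $\Xi$ that resolves a datum in $\mathcal{U}_{d,W}$ outputs the same verdict as $\Phi^\ast$. The second fact is the soundness/agreement clause: two sound rules cannot give contradictory verdicts on an identifiable datum, since on the identifiability locus the window data determine $y$ locally uniquely (Theorem~\ref{thm:window-ident-general}), and hence determine the truth value of ``zero-defect'' there. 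Together with soundness of $\Phi^\ast$ itself, these two facts are precisely the two conditions of Definition~\ref{def:local-optimal} with $\Psi=\Phi^\ast$. So the first claim is a direct translation, and I would verify only that the neighborhood and window parameters $K$ in the two statements match.

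\textbf{Equality of resolve sets.} Let $\Psi$ be any locally optimal sound procedure on $\mathcal{U}_{d,W}$.
\begin{itemize}
\item Apply Definition~\ref{def:local-optimal} to $\Psi$ with the competitor $\Xi:=\Phi^\ast$, which is sound. This gives $\operatorname{Res}(\Phi^\ast)\cap\mathcal{U}_{d,W}\subseteq\operatorname{Res}(\Psi)\cap\mathcal{U}_{d,W}$.
\item Apply the local optimality of $\Phi^\ast$ just established, with the competitor $\Xi:=\Psi$. This gives the reverse inclusion.
\end{itemize}
Hence the two resolve sets coincide on $\mathcal{U}_{d,W}$.

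\textbf{Agreement of outputs.} The agreement clause in Definition~\ref{def:local-optimal}, applied to $\Psi$ with $\Xi=\Phi^\ast$, gives $\Psi(\mathcal{W}_{W,K}(y))=\Phi^\ast(\mathcal{W}_{W,K}(y))$ for every $y\in\operatorname{Res}(\Phi^\ast)\cap\mathcal{U}_{d,W}$, which is the claim. The clause applied to $\Phi^\ast$ with $\Xi=\Psi$ gives the same identity and serves as a consistency check.

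\textbf{Main obstacle.} The only real difficulty is bookkeeping. I must make sure that Theorem~\ref{thm:domination} indeed delivers the output-agreement clause, and not merely the resolve-set inclusion. If it gives only the inclusion, I would derive agreement from soundness (Definition~\ref{def:sound}). A sound rule's resolved output must equal the true status of every configuration consistent with the data. On $\mathcal{U}_{d,W}$, local identifiability makes that status single-valued, so any two sound rules that both resolve a datum there must output the same value.
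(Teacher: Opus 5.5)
Your proposal is correct and follows the paper's proof essentially verbatim. Local optimality of $\Phi^\ast$ is read off from Theorem~\ref{thm:domination}, which does supply the agreement clause, so your fallback is not needed. Equality of the resolve sets, together with agreement of outputs, comes from applying Definition~\ref{def:local-optimal} to $\Psi$ with $\Xi=\Phi^\ast$, and Theorem~\ref{thm:domination} to $\Psi$ for the reverse inclusion. One minor remark on the fallback: two sound rules that both resolve the same datum $\mathcal{W}_{W,K}(y)$ must agree by soundness at the single realization $y$ alone. Identifiability is needed only for the inclusion of resolve sets.
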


\begin{proof}
By Theorem~\ref{thm:domination}, $\Phi^\ast$ dominates every sound procedure on $\mathcal{U}_{d,W}$, hence it is locally optimal by Definition~\ref{def:local-optimal}.
If $\Psi$ is locally optimal, then applying Definition~\ref{def:local-optimal} with $\Xi=\Phi^\ast$ gives
$\operatorname{Res}(\Phi^\ast)\cap\mathcal{U}_{d,W}\subseteq \operatorname{Res}(\Psi)\cap\mathcal{U}_{d,W}$ and agreement on that set.
Together with Theorem~\ref{thm:domination} applied to $\Psi$ (which yields the reverse inclusion and agreement), this gives the claimed equality of resolve sets and agreement of outputs.
\end{proof}

\begin{remark}[Parameter uniqueness up to permutation / with ordering]
Theorem~\ref{thm:window-ident-global} determines the unordered node set
$\{\mu_1,\dots,\mu_d\}$ uniquely, hence parameters are unique up to simultaneous permutation of pairs
$(\mu_i,A_i)$.
If one imposes an ordering convention (for example $\mu_1<\cdots<\mu_d$ when nodes are real and positive),
the parameter tuple itself is uniquely determined.
\end{remark}

\begin{remark}\label{rem:generic-uniqueness}
In concrete Prony/Hankel implementations, failure of the hypotheses in Theorem~\ref{thm:window-ident-global}
(distinct exponents, nonzero amplitudes, and invertible Hankel/Vandermonde matrices) occurs on a proper algebraic subset
of parameter space (e.g.\ discriminant and determinant vanishing conditions). Hence the stated nondegeneracy conditions
are \emph{generic} (open dense) within standard finite-dimensional parametrizations of degree-$\le d$ rational signals.
\end{remark}

\begin{theorem}[General rank certification]\label{thm:general-rank}
For every pair of integers $d\ge 1$ and $W\ge 1$, the identifiability locus
$\Omega_{d,W}$ is nonempty.
In particular, the generic local uniqueness conclusion of Theorem~\ref{thm:window-ident-general}
and the global Prony recovery of Theorem~\ref{thm:window-ident-global} apply for every $(d,W)$ under the corresponding nondegeneracy hypotheses.
\end{theorem}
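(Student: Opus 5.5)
The plan is to exhibit, for arbitrary $(d,W)$, one explicit family of parameter points $\pi_0$ at which $\det DF_{d,W}(\pi_0)\neq 0$. Nonemptiness of $\Omega_{d,W}$ then follows, and Lemma~\ref{lem:window-generic-local} together with Theorems~\ref{thm:generic-full-rank}, \ref{thm:window-ident-general} and~\ref{thm:window-ident-global} give the stated consequences. Rather than computing the Jacobian in the recurrence coordinates $\pi$, I would factor $F_{d,W}$ through a chain of reparametrizations. Each link should be a local diffeomorphism near a well-chosen point, so that the chain rule reduces the question to a single structured determinant.

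\textbf{Step 1 (partial fractions).} Work near a point where $q$ has exact degree $d$ (that is, $q_d\neq 0$) and $d$ distinct real reciprocal roots $\lambda_1,\dots,\lambda_d$. Choose these positive, pairwise distinct, and different from $1$. Since $\deg p\le d=\deg q$, partial fractions give
\[
y_n=c_0\,\delta_{n0}+\sum_{i=1}^d c_i\lambda_i^n ,
\]
with $2d+1$ parameters $(c_0,c_1,\dots,c_d,\lambda_1,\dots,\lambda_d)$. The map $(c_0,c,\lambda)\mapsto\pi$ is polynomial. Its local inverse is smooth near such a point: simple roots of $q$ depend smoothly on $(q_1,\dots,q_d)$, and then $c$ is obtained by solving a Vandermonde-type linear system from $y_0,\dots,y_d$. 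Hence this map is a local diffeomorphism. I expect this step to be the main technical obstacle, because the degree conventions matter here: one must check that $q_d\neq0$ is needed to have exactly $d$ poles, and that $\deg p\le d$ produces exactly the single extra constant $c_0$.

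\textbf{Step 2 (window variables).} Put $g(\lambda):=1+\lambda+\cdots+\lambda^{W-1}$ and $\mu_i:=\lambda_i^W$. Summing over each block gives
\[
S_0=c_0+\sum_i c_i g(\lambda_i),\qquad S_k=\sum_i c_i g(\lambda_i)\,\mu_i^k\quad (k\ge1).
\]
Now set $A_i:=c_i g(\lambda_i)$. Because $\lambda_i>0$, we have $g(\lambda_i)>0$ and $\partial\mu_i/\partial\lambda_i=W\lambda_i^{W-1}\neq 0$. The Jacobian of $(c_i,\lambda_i)\mapsto(A_i,\mu_i)$ is therefore triangular with nonzero diagonal, so this is again a local diffeomorphism. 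The $\mu_i$ remain distinct and positive, since $\lambda\mapsto\lambda^W$ is injective on $(0,\infty)$. I also choose $c_i\neq 0$, so that every $A_i\neq 0$.

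\textbf{Step 3 (the structured determinant).} Consider the map $(c_0,A,\mu)\mapsto(S_0,\dots,S_{2d})$. The variable $c_0$ appears only in $S_0$, with coefficient $1$. Expanding along the $c_0$ column reduces the determinant to the $2d\times 2d$ matrix with rows $k=1,\dots,2d$, whose columns are $(\mu_i^k)_k$ for the $A_i$ and $(kA_i\mu_i^{k-1})_k$ for the $\mu_i$. Dividing the $\mu_i$-columns by $A_i\neq0$ turns this into the matrix of the functionals $f\mapsto f(\mu_i)$ and $f\mapsto f'(\mu_i)$, evaluated on the basis $t,t^2,\dots,t^{2d}$. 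Suppose it were singular. Then some nonzero $f=t\,h(t)$ with $\deg h\le 2d-1$ would have a double root at each of the $d$ distinct nonzero $\mu_i$. Since $\mu_i\neq0$, these would be double roots of $h$, giving $2d$ roots counted with multiplicity, which forces $h=0$, a contradiction. So this determinant is nonzero, and by the chain rule $\det DF_{d,W}(\pi_0)\neq0$ at the corresponding $\pi_0$. This proves $\Omega_{d,W}\neq\emptyset$, and the remaining assertions follow from the cited earlier results.
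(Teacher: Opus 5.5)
Your argument is correct, and it is more complete than the paper's proof. The paper uses essentially your witness: $y_n=\sum_i\alpha_i^n$ with distinct $\alpha_i\in(0,1)$, which is your point with $c_0=0$, $c_i=1$, $\lambda_i=\alpha_i$. It writes $S_k=\sum_i B_i\mu_i^k$ with $B_i=g(\alpha_i)$ and $\mu_i=\alpha_i^W$, shows that the $d\times d$ Hankel matrix factors as $V^\top\operatorname{diag}(B)V$ and is therefore invertible, and then cites Lemma~\ref{lem:window-generic-local}. That checks the Prony nondegeneracy hypothesis of Theorem~\ref{thm:window-ident-global} at the witness. It never evaluates $\det DF_{d,W}$, however, and that determinant is what defines $\Omega_{d,W}$. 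So the paper does not actually argue the step from Hankel invertibility to a nonsingular Jacobian of the $(2d+1)$-parameter window map. Your factorization through $(c_0,A,\mu)$ and the shifted confluent Vandermonde determinant supply exactly that link; there the rows are $k=1,\dots,2d$, and $\mu_i\neq0$ is precisely what is needed because $c_0$ absorbs the $k=0$ row. Your argument also gives nonvanishing at every point with distinct positive $\lambda_i$ and nonzero $c_i$, not just at one witness. One simplification: the local-diffeomorphism claim in Step~1, which you flag as the main obstacle, is unnecessary. Write $\rho:(c_0,c,\lambda)\mapsto\pi$, $\tau:(c_0,c,\lambda)\mapsto(c_0,A,\mu)$, and $E:(c_0,A,\mu)\mapsto(S_0,\dots,S_{2d})$. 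The identity $F_{d,W}\circ\rho=E\circ\tau$ needs only the one-line check that $c_0\delta_{n0}+\sum_i c_i\lambda_i^n$ satisfies the recurrence for $n\ge d+1$. It yields $\det DF_{d,W}(\pi_0)\cdot\det D\rho=\det DE\cdot\det D\tau\neq0$, which forces $\det DF_{d,W}(\pi_0)\neq0$ without ever inverting $\rho$. The condition $\lambda_i\neq1$ is also superfluous. If you also want the Prony locus of Theorem~\ref{thm:window-ident-global} to be visibly nonempty, take $c_0=0$. Then $S_k=\sum_i A_i\mu_i^k$ exactly, and the paper's $V^\top\operatorname{diag}(A)V$ computation applies at your point.
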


\begin{proof}
We exhibit a witness family.
Choose distinct $\alpha_1,\dots,\alpha_d\in(0,1)$ (e.g.\ $\alpha_i:=1/(i+1)$), and define
$y_n:=\sum_{i=1}^d \alpha_i^n$.
Its $W$-block window sums satisfy
\[
S_k=\sum_{i=1}^d B_i\,\mu_i^k,\qquad
\mu_i:=\alpha_i^W,\quad
B_i:=\frac{1-\alpha_i^W}{1-\alpha_i}.
\]
The nodes $\mu_i$ are distinct and $B_i>0$.
Hence the Hankel matrix $H=(S_{i+j})_{0\le i,j\le d-1}$ admits
\[
H=V^\top\!\operatorname{diag}(B)\,V,
\]
with $V$ the Vandermonde matrix on $(\mu_i)$, so
\[
\det H=(\det V)^2\prod_{i=1}^d B_i
=\left(\prod_{i<j}(\mu_j-\mu_i)\right)^2\prod_{i=1}^d B_i\neq 0.
\]
Therefore the Prony/Hankel nondegeneracy conditions hold at this witness.
By Lemma~\ref{lem:window-generic-local}, this gives a nonempty full-rank identifiability locus.
\end{proof}

\begin{corollary}\label{cor:window-ident-38}
In the specialized regime used in this paper we take $d=3$ and $W=8$, hence $K_{\min}=2d+1=7$.
We emphasize the two different thresholds: $K\ge 2d+1$ is the dimensional requirement for \emph{local} recovery of the full parameter $\pi\in\R^{2d+1}$ from the truncated window map $F_{d,W}$, while $K\ge 2d$ is sufficient for \emph{global} recovery of the induced window-sum process via Prony/Hankel methods under the nondegeneracy hypotheses of Theorem~\ref{thm:window-ident-global}.
For this choice, Theorem~\ref{thm:general-rank} yields $\Omega_{3,8}\neq\emptyset$, so the generic local uniqueness conclusion of
Theorem~\ref{thm:window-ident-general} holds. Moreover, under the nondegeneracy hypotheses of Theorem~\ref{thm:window-ident-global}, the first $2d$ window sums (hence any $K\ge 2d$) determine the window-sum sequence (equivalently, the exponential parameters $(\mu_i,A_i)$) \emph{globally} uniquely.
\end{corollary}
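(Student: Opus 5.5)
The corollary is a specialization, so the plan is to assemble earlier results at $(d,W)=(3,8)$ rather than prove anything new. The arithmetic is immediate: $K_{\min}=2d+1=7$ and $2d=6$.

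The local claim comes first. Theorem~\ref{thm:general-rank} asserts $\Omega_{3,8}\neq\emptyset$ for every $(d,W)$. For the operating case there is also an independent route. Lemma~\ref{lem:witness-38} exhibits $\pi_0=(1,1,5,1,2,2,-2)$ with $\det DF_{3,8}(\pi_0)\not\equiv 0\pmod{10^9+7}$. The finite-field clause of Theorem~\ref{thm:generic-full-rank} then gives $\det DF_{3,8}(\pi_0)\neq 0$ over $\R$.

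I would rely on this witness rather than on the proof of Theorem~\ref{thm:general-rank}. That proof establishes Hankel nondegeneracy, which is not literally the Jacobian condition $\det DF_{d,W}\neq 0$ defining $\Omega_{d,W}$. Citing Lemma~\ref{lem:witness-38} closes that gap for $(3,8)$ directly.

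With $\Omega_{3,8}$ nonempty, Theorem~\ref{thm:window-ident-general} applies at every $\pi\in\Omega_{3,8}$ for any $K\ge 7$. The argument there factors $G_K$ through the coordinate projection onto the first seven windows and then uses local injectivity of $F_{3,8}$. This yields local uniqueness of $\pi\in\R^7$ and hence of the whole sequence $(y_n)$.

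The global claim is a direct invocation of Theorem~\ref{thm:window-ident-global} with $d=3$, assuming (H-prony): distinct nonzero $\mu_i$, nonzero $A_i$, and invertible $3\times 3$ Hankel matrix $H$. From $(S_0,\dots,S_5)$ one first solves the Hankel system for the monic characteristic polynomial and reads off its roots $\mu_i$. One then solves the Vandermonde system for the $A_i$, which recovers all $S_k$. For $K\ge 6$, the extra windows are simply discarded, or equivalently are consistent with the recovered sequence.

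The only subtle point is the emphasis on the two thresholds, which needs a brief remark rather than a proof. The local statement counts $2d+1=7$ parameters, namely $\pi$ including $y_0,\dots,y_d$. The global statement recovers only the induced window process, with $2d=6$ parameters $(\mu_i,A_i)$, not $\pi$ itself. So the two results are compatible and neither contradicts the other. I expect no real obstacle beyond this bookkeeping.
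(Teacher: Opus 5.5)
Your proposal is correct and follows the paper's own proof. The paper likewise takes $\Omega_{3,8}\neq\emptyset$ from the explicit witness in Lemma~\ref{lem:witness-38} rather than from Theorem~\ref{thm:general-rank}, applies Theorem~\ref{thm:window-ident-general} for the local $K\ge 7$ threshold, and invokes Theorem~\ref{thm:window-ident-global} for the global $K\ge 2d$ statement; your observation that the proof of Theorem~\ref{thm:general-rank} verifies only Hankel nondegeneracy, not $\det DF_{d,W}\neq 0$, is well founded and is exactly why relying on the witness is the right move.
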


\begin{proof}
The specialization $d=3$, $W=8$ fixes $K_{\min}=2d+1=7$ and the local threshold statement is the
instance of Theorem~\ref{thm:window-ident-general} on the nonempty full-rank locus $\Omega_{3,8}$
certified in Lemma~\ref{lem:witness-38}.
The global $K\ge 2d$ threshold for recovery of the induced window-sum process (equivalently, the exponential parameters)
follows from Theorem~\ref{thm:window-ident-global} under its stated Prony/Hankel nondegeneracy hypotheses.
\end{proof}

\section{Certification and optimality from finite-window data}
\label{sec:cert-opt}
This section develops the finite-data certification pipeline built on the window operator:
we pass from raw window measurements to a canonical defect score, then to a decision rule.
We isolate the status of the window length $W$, give $\varepsilon$-robust certification bounds,
and define soundness and domination among finite-data procedures, culminating in
$\varepsilon$-optimal certification statements.

\begin{theorem}[Master Theorem (architectural summary)]\label{thm:master}
Fix integers $d\ge 0$, $W\ge 1$, and $K\ge 2d+1$.  Consider the window map $F_{d,W}$ and the finite-data pipeline
\[
\Phi^\ast:\ w\ \mapsto\ A(w)\ \mapsto\ x\ \mapsto\ u=P(\log x)\ \mapsto\ B(u)\in\{\zero,\nonzero,\inconclusive\},
\]
as defined in Section~\ref{sec:cert-opt}.  Assume the standing hypotheses needed for the cited results below
(in particular, the cost axioms and the identifiability assumptions for the window model).  Then:
\begin{enumerate}[label=\textup{(\roman*)},leftmargin=2.2em]
\item \textbf{Soundness.}  The decision rule $\Phi^\ast$ is sound in the sense of Definition~\ref{def:sound}.
\item \textbf{$\varepsilon$-robust certification.}  Under perturbations of size $\varepsilon$, the certified conclusion degrades in the sharp two-sided way, yielding membership in $\Mean_{2\varepsilon}$ as in Theorems~\ref{thm:eps-tolerant} and~\ref{thm:eps-cert}.
\item \textbf{Local domination (optimality).}  On the identifiability neighborhood $\mathcal U_{d,W}$, $\Phi^\ast$ dominates every sound procedure (Theorem~\ref{thm:domination}).
\item \textbf{Uniqueness among locally optimal rules.}  Any sound procedure that is locally optimal on $\mathcal U_{d,W}$ agrees with $\Phi^\ast$ on all resolved data in $\mathcal U_{d,W}$ (Corollary~\ref{cor:unique-local-optimal}).
\item \textbf{Forced decision-layer dependence.}  Any sound defect-based decision rule factors through the scale-invariant invariant $u=P(\log x)$, hence through the $(P,B)$-core in the sense of Theorem~\ref{thm:forced-PB}.
\item \textbf{Non-vacuity via generic full rank.}  For any $(d,W)$, if there exists a witness $\pi_0$ with $\det DF_{d,W}(\pi_0)\neq 0$, then the full-rank locus is a nonempty Zariski-open (hence dense) subset (Theorem~\ref{thm:generic-full-rank}); for $(d,W)=(3,8)$ such a witness is certified in Lemma~\ref{lem:witness-38}.
\end{enumerate}
If, in addition, the window map is globally injective on the parameter locus $\mathcal M_{d,W}$, then the domination statement upgrades from $\mathcal U_{d,W}$ to $\mathcal M_{d,W}$ (Corollary~\ref{cor:domination-global}).
\end{theorem}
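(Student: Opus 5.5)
The Master Theorem is an architectural summary, so the proof is an assembly: each item (i)--(vi) is discharged by the result it cites, after checking that the standing hypotheses of the Master Theorem contain the hypotheses of that result. I would first fix the pipeline precisely. Window data $w=(\mathcal W(y)_0,\dots,\mathcal W(y)_{K-1})$ are fed to the reconstruction map $A$. On the identifiability locus, $A$ is the local inverse of $F_{d,W}$ supplied by Theorem~\ref{thm:window-ident-general}, composed with the projection $\mathrm{pr}$; under (H-prony) one may use the Hankel/Vandermonde solve of Theorem~\ref{thm:window-ident-global} instead. The output is a configuration $x$. Then $u=P(\log x)$ is formed by Definition~\ref{def:proj}, and $B$ thresholds $\tfrac12\norm{u}_2^2$ against $\sum_i J(e^{u_i})$ using Theorem~\ref{thm:coercive}. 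Off the locus, $B$ returns $\inconclusive$. With this fixed, the following steps prove the items.

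\textbf{Item (i), soundness.} If $\Phi^\ast$ outputs $\zero$, then the reconstructed $x$ is the true configuration, by local injectivity on $\Omega_{d,W}$ (Theorem~\ref{thm:window-ident-general}). The vanishing coercive bound gives $\mathrm{Def}(x)=0$ by Theorem~\ref{thm:coercive}. Conservation (C2) together with Lemma~\ref{lem:Pkernel} then gives $x\equiv 1$. If $\Phi^\ast$ outputs $\nonzero$, the inequality \eqref{eq:coercive} gives a strictly positive defect, so $x\notin S$ by Lemma~\ref{lem:logform}. In all other cases it abstains. This is exactly Definition~\ref{def:sound}.

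\textbf{Item (ii), $\varepsilon$-robustness.} Cite Theorems~\ref{thm:eps-tolerant} and~\ref{thm:eps-cert}. The only check needed is that perturbations of $w$ of size $\varepsilon$ pass through $A$ with controlled size, which holds by continuity of the local inverse on $\Omega_{d,W}$.

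\textbf{Items (iii) and (iv).} Item (iii) is Theorem~\ref{thm:domination} verbatim on $\mathcal U_{d,W}$. Item (iv) is Corollary~\ref{cor:unique-local-optimal}, whose proof is already given above by the symmetric use of Definition~\ref{def:local-optimal}.

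\textbf{Item (v), forced factorization.} Cite Theorem~\ref{thm:forced-PB}. The intended mechanism is as follows. A sound defect-based rule must be invariant under changes of $y$ by multiples of $\mathbf 1$, because Definition~\ref{def:defect} depends only on $P(y)$. It therefore factors through $P$, and by Lemma~\ref{lem:forced-factor} it factors further through the monotone scalar $J$-cost, which is the $B$ step.

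\textbf{Item (vi), non-vacuity.} This is Theorem~\ref{thm:generic-full-rank}, together with the witness of Lemma~\ref{lem:witness-38}; alternatively one can use the general witness family of Theorem~\ref{thm:general-rank}.

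\textbf{Final upgrade.} If $F_{d,W}$ is globally injective on $\mathcal M_{d,W}$, then $A$ is single-valued there. The domination argument then needs no neighborhood restriction, which is Corollary~\ref{cor:domination-global}.

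\textbf{Expected obstacles.} The main difficulty is not any computation but consistency of hypotheses. Three points need care:
\begin{itemize}
\item The statement allows $d\ge 0$, while Section~\ref{subsec:window-uniq} assumes $d\ge 1$. I would treat $d=0$ separately: $F_{0,W}$ is linear and invertible, so the locus is everything.
\item The identifiability neighborhood $\mathcal U_{d,W}$ used in domination must be contained in $\Omega_{d,W}$, where the reconstruction map $A$ is defined.
\item The Prony locus of Theorem~\ref{thm:window-ident-global} controls only the window-sum process, not $\pi$. So item (i) must rely on the local inverse of $F_{d,W}$ via Theorem~\ref{thm:window-ident-general}, and this requires $K\ge 2d+1$ rather than $2d$. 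That is why $K\ge 2d+1$ appears in the hypotheses.
\end{itemize}
I would state these inclusions explicitly as the first lemma of the proof.
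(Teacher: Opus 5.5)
Your item-by-item assembly matches the paper's proof for (iii), (iv), (vi) and the global upgrade, which are pure citations. The genuine gap is item (i). There, instead of citing, you derive soundness from local injectivity of $F_{d,W}$ on $\Omega_{d,W}$ (Theorem~\ref{thm:window-ident-general}), and that step fails.

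A local inverse of $F_{d,W}$ is attached to a base point that the procedure does not know. Theorem~\ref{thm:window-ident-general} excludes only \emph{nearby} preimages, whereas soundness in Definition~\ref{def:domination} quantifies over every $y\in\mathcal M_{d,W}$; this is exactly the caveat of Remark~\ref{rem:window-global}.

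Worse, the $\zero$ verdict is issued at the neutral datum. For the normalization $y_{\mathrm{neu}}\equiv 0$ suggested in Definition~\ref{def:domination}, the neutral signal has no representative in $\Omega_{d,W}$. Every parameter realizing it has the form $(0,\dots,0,q_1,\dots,q_d)$, and $F_{d,W}$ is constant on this $d$-dimensional slice. So $DF_{d,W}$ has $d$ zero columns there and $\det DF_{d,W}=0$ whenever $d\ge 1$. The Prony hypotheses fail there too, since $S_k\equiv 0$.

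Consequences for your plan:
\begin{itemize}
\item Your proposed first lemma $\mathcal U_{d,W}\subseteq\Omega_{d,W}$ is false for every neighborhood of $y_{\mathrm{neu}}$.
\item Your third ``obstacle'' steers item (i) onto a tool that is unavailable exactly where it is needed.
\item The same objection hits your justification of (ii) via continuity on $\Omega_{d,W}$. Theorem~\ref{thm:eps-tolerant} needs invertibility \emph{at} the neutral realization (Proposition~\ref{prop:rec-exists}). That is a standing hypothesis, not a consequence of continuity on $\Omega_{d,W}$.
\end{itemize}

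Soundness of a $\zero$ output is not a rank phenomenon at all. For $W=2$ the degree-$1$ signal $y_n=c(-1)^n$ has all window sums zero for every $c$. For $(d,W)=(3,8)$ the same holds for the degree-$2$ signal $y_n=c\cos(\pi n/4)$. So unless the class is cut down by hypothesis, non-neutral signals arbitrarily close to $y_{\mathrm{neu}}$ share its data (Proposition~\ref{prop:sharpness-collision}).

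In the paper, soundness is built into the construction rather than derived. By Definition~\ref{def:pipeline-components}(a), $A(w)$ is defined only when the realization compatible with $w$ is unique, and $\inconclusive$ is returned otherwise. On $\mathcal U_{d,W}$, this uniqueness and the neutral/non-neutral determinacy are supplied by the assumed injectivity of $\mathcal W_{W,K}$ and by hypothesis (b) of Theorem~\ref{thm:domination}. The verification is Proposition~\ref{prop:pipeline-sound} together with the soundness step in the proof of Theorem~\ref{thm:domination}, which the Master Theorem absorbs into its standing hypotheses.

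Once $A(w)$ is known to be the true realization, your $\zero$ branch is fine (Theorem~\ref{thm:coercive}, conservation, Lemma~\ref{lem:Pkernel}). For $\nonzero$, however, the lower bound \eqref{eq:coercive} points the wrong way. You need $\mathcal B(0)=0$ (Lemma~\ref{lem:logform}), so that $\mathcal B(u)\neq 0$ forces $u\neq 0$. Note also that $B$ is the indicator of $\mathcal B(u)=0$, not a comparison with $\tfrac12\norm{u}_2^2$.

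Two smaller points. What you verify is procedure-soundness in the sense of Definition~\ref{def:domination}, not the certificate notions of Definition~\ref{def:sound}. In (v), the further factorization through Lemma~\ref{lem:forced-factor} is unsupported: that lemma requires an ordinally sound certificate, which a ternary decision rule is not. Theorem~\ref{thm:forced-PB} alone is what item (v) asserts.
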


\begin{proof}
Items (i)–(ii) follow from Definition~\ref{def:sound} together with Theorems~\ref{thm:eps-tolerant} and~\ref{thm:eps-cert}.  Item (iii) is Theorem~\ref{thm:domination}.  Item (iv) is Corollary~\ref{cor:unique-local-optimal}.  Item (v) is the factorization/forced-dependence principle recorded in Theorem~\ref{thm:forced-PB}.  Item (vi) is Theorem~\ref{thm:generic-full-rank}, with nonemptiness witnessed for $(3,8)$ by Lemma~\ref{lem:witness-38}.  The final global upgrade is Corollary~\ref{cor:domination-global}.
\end{proof}

\begin{table}[t]
\centering
\caption{Selected constants in the certification layer and where they are forced.}
\label{tab:forced-constants}
\begin{tabular}{ll}
\hline
Constant / threshold & Forced by \\
\hline
Factor $2\eps$ in $\Mean_{2\eps}$ & Perturbation triangle inequality in Theorem~\ref{thm:eps-tolerant} \\
Normalization $J(1)=0$ & Proposition~\ref{prop:norm-forced-vv} (Appendix, RCL) \\
Calibration $J_{\log}''(0)=1$ & Axiom~2 (Appendix) fixes quadratic scale \\
Window count $K\ge 2d+1$ & Identifiability from $2d+1$ windows (Theorems~\ref{thm:window-ident-general}, \ref{thm:window-ident-global}) \\
\hline
\end{tabular}
\end{table}

\subsection{Status of the window length \texorpdfstring{$W$}{W}}\label{subsec:status-W}
Throughout, $W\in\N$ denotes the fixed block length used to form window sums \eqref{eq:window}.
The theory is stated for general $W$; the value $W=8$ is a modeling choice, used below only as an
illustrative operating point.

A simple (self-contained) motivation for the number $8$ comes from a three-bit latent state
$b\in\{0,1\}^3$. A Gray-code cycle on the 3-cube visits all $2^3=8$ states exactly once before
returning to the start, for example
\[
000\to 001\to 011\to 010\to 110\to 111\to 101\to 100\to 000,
\]
where successive states differ in exactly one bit. If a measurement/aggregation step is designed to
cover one full cycle of such a three-bit state evolution, then the natural ``cycle window'' length is
$W=8$. This is only a heuristic design rationale: none of the main theorems depends on it, and the
identifiability threshold remains $K\ge 2d+1$ for any fixed $W\ge 1$.

\subsection{\texorpdfstring{$\varepsilon$}{eps}-tolerant certification from noisy window sums}\label{subsec:eps-stability}
In practice, window sums are observed with finite precision. Instead of exact constraints
$\mathcal{W}(y)_k=0$, we assume a uniform tolerance model
\begin{equation}\label{eq:window-noise}
\bigl|W_k\bigr|\le \varepsilon,\qquad k=0,1,\dots,K-1.
\end{equation}
The next result shows that small window-sum errors yield a small certification cost for the
reconstructed signal. The bound is quadratic in $\varepsilon$, with constants controlled by the
conditioning of the reconstruction map.

\begin{lemma}\label{lem:cosh-upper}
For every $t\in\R$,
\[
\cosh(t)-1 \ \le\ \frac12\,e^{|t|}\,t^2.
\]
In particular, if $|t|\le \tau$ then $\cosh(t)-1\le \frac12 e^{\tau}t^2$.
\end{lemma}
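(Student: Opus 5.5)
The plan is to reduce to $t\ge 0$ and compare power series termwise. Both sides are even in $t$ (note $e^{|t|}t^2$ is even), so it suffices to take $t\ge 0$. Expanding,
\[
\cosh(t)-1=\sum_{k\ge 1}\frac{t^{2k}}{(2k)!},\qquad
\frac12 e^{t}t^2=\sum_{m\ge 0}\frac{t^{m+2}}{2\,m!}.
\]
All coefficients are nonnegative for $t\ge 0$. The term $t^{2k}$ on the left has coefficient $1/(2k)!$. On the right, the same power $t^{2k}$ appears with $m=2k-2$, with coefficient $1/(2\,(2k-2)!)$. Since $(2k)!=(2k)(2k-1)(2k-2)!\ge 2\,(2k-2)!$ for $k\ge 1$, each left coefficient is at most the matching right coefficient. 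The odd-power terms on the right are nonnegative and only help. Summing gives the inequality.

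An alternative route, which I would mention as a cross-check, uses the integral form of Taylor's theorem. Since $\cosh(0)-1=0$ and $\sinh(0)=0$,
\[
\cosh(t)-1=\int_0^t (t-s)\cosh(s)\,ds.
\]
Using $\cosh(s)\le e^{|s|}\le e^{|t|}$ for $s$ between $0$ and $t$, this is bounded by $e^{|t|}\,t^2/2$. This is arguably the cleanest one-line proof, and I would probably present it as the main argument.

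The ``in particular'' clause follows from monotonicity of $e^{|t|}$ in $|t|$: if $|t|\le\tau$, then $e^{|t|}\le e^{\tau}$. There is no real obstacle here. The only point to take care over is justifying the integral-remainder formula, or equivalently the coefficient comparison, and noting the evenness reduction for $t<0$.
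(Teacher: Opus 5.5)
Your proposal is correct, and your first argument is essentially the paper's proof: the paper also expands $\cosh(t)-1=\sum_{m\ge 1}t^{2m}/(2m)!$ and uses $(2m)!\ge 2\,(2m-2)!$. The only cosmetic difference is that the paper sums the resulting series to get the intermediate bound $\frac{t^2}{2}\cosh(|t|)$ and then uses $\cosh(|t|)\le e^{|t|}$, whereas you compare directly with the series of $\frac12 e^{t}t^2$ and discard its odd terms; your integral-remainder cross-check is also valid, gives the same intermediate bound $\frac{t^2}{2}\cosh(t)$, and could serve equally well as the written proof.
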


\begin{proof}
Using the power series $\cosh(t)-1=\sum_{m\ge 1}\frac{t^{2m}}{(2m)!}$ and $(2m)!\ge 2\,(2m-2)!$,
\[
\cosh(t)-1 \le \frac{t^2}{2}\sum_{m\ge 1}\frac{|t|^{2m-2}}{(2m-2)!}
= \frac{t^2}{2}\cosh(|t|)\le \frac{t^2}{2}e^{|t|}.
\]
\end{proof}

\begin{proposition}\label{prop:rec-exists}
Fix $W\in\N$, $d\ge 0$, and $n\in\N$, and set $K:=\lceil n/W\rceil$.
Assume $K\ge 2d+1$ and work on a non-degenerate parameter locus where the truncated window map
\[
\pi\longmapsto \mathcal{W}_{W,K}(y(\pi))
\]
is locally invertible at the neutral realization (equivalently, its Jacobian determinant is nonzero).
Then there exist $\varepsilon_0>0$ and a $C^1$ reconstruction map
\[
\mathsf{Rec}_{d,W}:\{w\in\R^K:\ \|w\|_\infty\le \varepsilon_0\}\to\R^n
\]
such that $\mathsf{Rec}_{d,W}(w)$ returns the first $n$ coefficients of the unique model signal
compatible with the window data $w$ in that neighborhood. Moreover, writing
\[
L\:=\ \sup_{\|w\|_\infty\le \varepsilon_0}\ \bigl\|D\mathsf{Rec}_{d,W}(w)\bigr\|_{2\to 2},
\]
one has the Lipschitz bound
\[
\|\mathsf{Rec}_{d,W}(w)-\mathsf{Rec}_{d,W}(0)\|_2\le L\,\|w\|_2\qquad(\|w\|_\infty\le \varepsilon_0).
\]
\end{proposition}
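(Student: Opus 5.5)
The plan is to obtain $\mathsf{Rec}_{d,W}$ from the inverse function theorem applied to the truncated window map, and then to compose it with the (polynomial) coefficient map. The Lipschitz bound then follows from the mean value inequality on a convex set.

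\textbf{Step 1 (local inverse).} Write $G(\pi):=\mathcal{W}_{W,K}(y(\pi))$ for the first $K$ window sums. By Lemma~\ref{lem:window-generic-local} this map is polynomial in $\pi\in\R^{2d+1}$, hence $C^\infty$. Let $\pi_\ast$ be the neutral realization, so $G(\pi_\ast)=0$. When $K=2d+1$, $G$ coincides with $F_{d,W}$, and the hypothesis $\det DF_{d,W}(\pi_\ast)\neq 0$ lets us invoke the inverse function theorem. This produces open neighbourhoods $U\ni\pi_\ast$ and $V\ni 0$ such that $G|_U:U\to V$ is a $C^1$ diffeomorphism.

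When $K>2d+1$, I would argue as in Theorem~\ref{thm:window-ident-general}. Apply the inverse function theorem to $F_{d,W}=\mathrm{pr}\circ G$, and define the inverse on data $w$ through $(F_{d,W}|_U)^{-1}(\mathrm{pr}\,w)$. Uniqueness of the compatible model signal then holds among parameters in $U$: if $G(\pi')=w$ with $\pi'\in U$, then $F_{d,W}(\pi')=\mathrm{pr}\,w$, which forces $\pi'$. The hypothesis as stated (a nonzero Jacobian determinant) really only makes sense for the square case. So I would read ``locally invertible'' in the $K>2d+1$ case as invertibility of the first $2d+1$ components, and restrict $w$ to the image of $G$ where needed.

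\textbf{Step 2 (define $\mathsf{Rec}$).} Choose $\varepsilon_0>0$ so that the closed cube $Q:=\{\|w\|_\infty\le\varepsilon_0\}$ satisfies $\mathrm{pr}(Q)\subset V$. Since $V$ is open, such an $\varepsilon_0$ exists, and I would shrink it further so that $Q$ sits inside a slightly larger open cube still contained in $V$. Then set
\[
\mathsf{Rec}_{d,W}(w):=\bigl(y_0(\pi),\dots,y_{n-1}(\pi)\bigr),\qquad \pi:=(F_{d,W}|_U)^{-1}(\mathrm{pr}\,w).
\]
Each coordinate $y_m(\pi)$ is polynomial in $\pi$ by Lemma~\ref{lem:recurrence}. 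So $\mathsf{Rec}_{d,W}$ is the composition of a $C^1$ map with a smooth one, and is therefore $C^1$. The uniqueness assertion is exactly local injectivity from Step 1.

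\textbf{Step 3 (Lipschitz bound).} Because $D\mathsf{Rec}_{d,W}$ is continuous on a neighbourhood of the compact set $Q$, the quantity $L$ is finite. The set $Q$ is convex, so the segment $t\mapsto tw$, $t\in[0,1]$, stays in $Q$. The fundamental theorem of calculus then gives
\[
\mathsf{Rec}_{d,W}(w)-\mathsf{Rec}_{d,W}(0)=\int_0^1 D\mathsf{Rec}_{d,W}(tw)\,w\,dt,
\]
and taking norms yields $\le L\|w\|_2$.

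\textbf{Main obstacle.} The analysis itself is routine; the delicate point is the bookkeeping in Step 1. One has to ensure that $\varepsilon_0$ is chosen so that the whole cube, and not merely a Euclidean ball, maps into the domain of the local inverse. One also has to state precisely in what sense the compatible signal is ``unique'': it is unique within the neighbourhood $U$, not globally, consistent with Remark~\ref{rem:window-global}.
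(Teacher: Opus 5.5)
Your proof is correct and follows the paper's route: inverse function theorem at the neutral parameter, composition with the polynomial coefficient map, shrinking to a closed $\|\cdot\|_\infty$-cube, and the mean value inequality along $t\mapsto tw$. Your extra step for $K>2d+1$ (inverting only $F_{d,W}=\mathrm{pr}\circ G$, as in Theorem~\ref{thm:window-ident-general}) is a genuine tightening: the paper's proof asserts a $C^1$ diffeomorphism $U\to V$ onto a neighbourhood of $0\in\R^K$, which can exist only when $K=2d+1$.
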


\begin{proof}
Write $\|\cdot\|_\infty$ for the max norm on $\R^K$, i.e.\ $\|w\|_\infty:=\max_{0\le k\le K-1}|w_k|$, and $\|\cdot\|_2$ for the Euclidean norm.
Local invertibility of $\pi\mapsto \mathcal{W}_{W,K}(y(\pi))$ at the neutral realization implies, by the inverse function theorem, that there exist
neighborhoods $U$ of the neutral parameter and $V$ of $w=0$ such that the window map restricts to a $C^1$ diffeomorphism $U\to V$.
Define $\mathsf{Rec}_{d,W}$ on $V$ by taking the inverse map $V\to U$ and then returning the first $n$ coefficients of the corresponding model signal;
shrinking $V$ if necessary yields the stated cube $\{w:\|w\|_\infty\le \varepsilon_0\}\subseteq V$.

For the Lipschitz bound, $\mathsf{Rec}_{d,W}$ is $C^1$ on the closed cube, hence its derivative operator norm
$L:=\sup_{\|w\|_\infty\le \varepsilon_0}\|D\mathsf{Rec}_{d,W}(w)\|_{2\to 2}$ is finite.
By the mean value inequality on line segments,
\[
\|\mathsf{Rec}_{d,W}(w)-\mathsf{Rec}_{d,W}(0)\|_2
\le \sup_{t\in[0,1]}\|D\mathsf{Rec}_{d,W}(tw)\|_{2\to 2}\,\|w\|_2
\le L\,\|w\|_2,
\]
as claimed.
\end{proof}

\begin{theorem}\label{thm:eps-tolerant}
Fix $W\in\N$, $d\ge 0$, and $n\in\N$, and set $K:=\lceil n/W\rceil$.
Assume $K\ge 2d+1$ and that Proposition~\ref{prop:rec-exists} holds with constants $(\varepsilon_0,L)$.
Given window data $w\in\R^K$ with $|w_k|\le \varepsilon\le \varepsilon_0$, let $\hat y:=\mathsf{Rec}_{d,W}(w)$.
Assume the noiseless reconstruction $\hat y^{(0)}:=\mathsf{Rec}_{d,W}(0)$ is neutral, i.e.\ $P(\hat y^{(0)})=0$.
Then the certification objective
\[
\Phi(\hat y)\:=\ \sum_{i=1}^n J\!\left(e^{P(\hat y)_i}\right)\ =\ \sum_{i=1}^n \bigl(\cosh(P(\hat y)_i)-1\bigr)
\]
satisfies the explicit quadratic bound
\begin{equation}\label{eq:eps-bound}
\Phi(\hat y)\ \le\ \frac12\,e^{\,L\sqrt{K}\,\varepsilon_0}\,L^2K\,\varepsilon^2.
\end{equation}
\end{theorem}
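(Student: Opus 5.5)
The plan is to combine three earlier ingredients: the Lipschitz bound of Proposition~\ref{prop:rec-exists}, the fact that $P$ is an orthogonal projection (Definition~\ref{def:proj}), and the pointwise upper bound of Lemma~\ref{lem:cosh-upper}. The neutrality hypothesis $P(\hat y^{(0)})=0$ lets us write $P(\hat y)=P(\hat y-\hat y^{(0)})$ by linearity of $P$. Since $P$ is an orthogonal projection, $\|P(\hat y)\|_2\le\|\hat y-\hat y^{(0)}\|_2$.

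First, convert the $\ell^\infty$ noise model \eqref{eq:window-noise} into an $\ell^2$ bound: $\|w\|_2\le\sqrt K\,\|w\|_\infty\le\sqrt K\,\varepsilon$. Proposition~\ref{prop:rec-exists} applies because $\varepsilon\le\varepsilon_0$, and it gives
\[
\|P(\hat y)\|_2\le\|\hat y-\hat y^{(0)}\|_2\le L\sqrt K\,\varepsilon .
\]
This yields two consequences. In aggregate, $\sum_i P(\hat y)_i^2\le L^2K\varepsilon^2$. Coordinatewise, $|P(\hat y)_i|\le\|P(\hat y)\|_2\le L\sqrt K\,\varepsilon\le L\sqrt K\,\varepsilon_0=:\tau$ for every $i$.

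Next, apply Lemma~\ref{lem:cosh-upper} termwise with this uniform $\tau$: $\cosh(P(\hat y)_i)-1\le\frac12 e^{\tau}P(\hat y)_i^2$. Summing over $i=1,\dots,n$ gives
\[
\Phi(\hat y)\le\tfrac12 e^{L\sqrt K\varepsilon_0}\,\|P(\hat y)\|_2^2\le\tfrac12 e^{L\sqrt K\varepsilon_0}L^2K\varepsilon^2,
\]
which is \eqref{eq:eps-bound}. The identity $J(e^{t})=\cosh t-1$ from Lemma~\ref{lem:logform} justifies the two expressions for $\Phi$.

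There is no real obstacle; the step requiring care is the uniform coordinate bound needed to freeze the exponential factor. One must bound each $|P(\hat y)_i|$ by the full $\ell^2$ norm rather than by anything depending on $n$, and use $\varepsilon\le\varepsilon_0$ so that the prefactor involves $\varepsilon_0$ rather than $\varepsilon$, exactly matching the stated constant. One should also remark that $\mathsf{Rec}_{d,W}$ outputs log-coordinates $\hat y\in\R^n$, so that $P$ acts directly on it. This is consistent with the statement, which applies $P$ to $\hat y$ itself.
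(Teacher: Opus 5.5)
Your proposal is correct and follows the paper's proof essentially step for step. You use the Lipschitz bound of Proposition~\ref{prop:rec-exists} with $\|w\|_2\le\sqrt K\,\varepsilon$, then neutrality plus the projection property to get $\|P(\hat y)\|_2\le L\sqrt K\,\varepsilon$, then the uniform coordinate bound $\|P(\hat y)\|_\infty\le L\sqrt K\,\varepsilon_0$ to freeze the exponential in Lemma~\ref{lem:cosh-upper} before summing. The only cosmetic difference is that you invoke linearity, $P(\hat y)=P(\hat y-\hat y^{(0)})$, where the paper cites the $1$-Lipschitz property of the orthogonal projection; these amount to the same thing.
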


\begin{proof}
Let $\hat y^{(0)}:=\mathsf{Rec}_{d,W}(0)$. By Proposition~\ref{prop:rec-exists}, for $\|w\|_\infty\le\varepsilon_0$ we have
\[
\|\hat y-\hat y^{(0)}\|_2\ \le\ L\,\|w\|_2 \ \le\ L\sqrt{K}\,\varepsilon.
\]
Since $P$ is an orthogonal projection, $\|P(u)-P(v)\|_2\le \|u-v\|_2$, hence using $P(\hat y^{(0)})=0$,
\[
\|P(\hat y)\|_2 \ =\ \|P(\hat y)-P(\hat y^{(0)})\|_2 \ \le\ \|\hat y-\hat y^{(0)}\|_2
\ \le\ L\sqrt{K}\,\varepsilon.
\]
Also $\|P(\hat y)\|_\infty\le \|P(\hat y)\|_2\le L\sqrt{K}\,\varepsilon \le L\sqrt{K}\,\varepsilon_0$.
Applying Lemma~\ref{lem:cosh-upper} componentwise gives
\[
\Phi(\hat y)=\sum_{i=1}^n(\cosh(P(\hat y)_i)-1)
\le \frac12 e^{\|P(\hat y)\|_\infty}\,\|P(\hat y)\|_2^2
\le \frac12 e^{L\sqrt{K}\varepsilon_0}\,L^2K\,\varepsilon^2,
\]
which is \eqref{eq:eps-bound}.
\end{proof}

\begin{remark}\label{rem:eps-numeric}
In the special case $(d,W)=(3,8)$ one may take $K=7$ windows.
Once a bound $L\le L_0$ is established for the local reconstruction map on $\|w\|_\infty\le \varepsilon_0$,
Theorem~\ref{thm:eps-tolerant} yields the explicit estimate
\[
\Phi(\hat y)\ \le\ \tfrac12 e^{L_0\sqrt{7}\,\varepsilon_0}\,L_0^2\cdot 7\,\varepsilon^2.
\]
For example, if a numerical conditioning check verifies $L_0\le 10$ on $\varepsilon_0=10^{-2}$,
then $\Phi(\hat y)\le 350\,\varepsilon^2$ for all $\varepsilon\le 10^{-2}$.
\end{remark}

\begin{remark}\label{rem:conditioning}
The Lipschitz constant $L$ in Proposition~\ref{prop:rec-exists} can be expressed in terms of the operator norm of
the inverse (or pseudoinverse) of the linearised reconstruction matrix at $w=0$; equivalently, in
terms of a condition number $\kappa$ for the Vandermonde/Hankel system used by the reconstruction.
Poor conditioning increases $L$ (equivalently the relevant condition number), but the dependence on $\varepsilon$ remains quadratic.
\end{remark}

\subsection{The certification objective}

\begin{definition}\label{def:meaning}
Given a cost function $c:S\times O\to\R_{\ge 0}$, the \emph{meaning map} (argmin correspondence) is
\[\Mean(s):=\arg\min_{o\in O} c(s,o):=\{o\in O: c(s,o)=\inf_{o'\in O}c(s,o')\}.\]
\end{definition}

\begin{remark}\label{rem:attainment}
The meaning map $\Mean(s)$ is defined via an infimum over $O$ and may be empty in full generality.
In the intended finite-data setting one works with a \emph{finite} candidate list $O$, in which case
every function $O\to\R$ attains its minimum and $\Mean(s)\neq\emptyset$ for all $s$.
Whenever we speak of ``the'' optimizer or of certificates that ``attain their minimum exactly on $\Mean(s)$'',
we implicitly restrict to such settings (or more generally assume that $c(s,\cdot)$ attains its minimum on $O$).
\end{remark}

\begin{proposition}[Attainment on compact candidate classes]\label{prop:meaning-compact}
Fix $s\in S$.
Assume $O$ is compact and the map $o\mapsto c(s,o)$ is continuous.
Then $\Mean(s)$ is nonempty.
\end{proposition}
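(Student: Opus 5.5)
The plan is to apply the extreme value theorem (Weierstrass) to the continuous function $f(o):=c(s,o)$ on the compact set $O$. With $s$ fixed, $f:O\to\R_{\ge 0}$ is continuous by hypothesis. The proof then reduces to showing that $f$ attains its infimum. By Definition~\ref{def:meaning}, any point where the infimum is attained lies in $\Mean(s)$.

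First, I note that $m:=\inf_{o'\in O}c(s,o')$ is a finite real number. It exists because $f\ge 0$ and $O$ is nonempty; nonemptiness of $O$ is implicit in the statement, since otherwise $\Mean(s)$ is trivially empty, and I will state this as a standing convention. Next, I choose a minimizing sequence $o_k\in O$ with $f(o_k)\to m$.

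In a metric or first-countable setting, compactness gives a subsequence $o_{k_j}\to o^\ast\in O$, and continuity gives $f(o^\ast)=\lim_j f(o_{k_j})=m$. To avoid any metrizability assumption, I would instead use the open-cover formulation. Suppose, for contradiction, that $f(o)>m$ for all $o\in O$. Then the sets $U_t:=\{o\in O: f(o)>t\}$ with $t>m$ are open by continuity and cover $O$. By compactness, finitely many of them cover $O$, so $O=U_{t^\ast}$ where $t^\ast$ is the largest of the finitely many thresholds. This gives $f>t^\ast>m$ on all of $O$, which contradicts $m$ being the infimum.

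Either route yields $o^\ast\in O$ with $c(s,o^\ast)=\inf_{o'\in O}c(s,o')$, and hence $o^\ast\in\Mean(s)\neq\emptyset$. There is no real obstacle here. The only point needing care is the implicit hypothesis $O\neq\emptyset$, together with the choice of the open-cover argument so that no sequential compactness is required.
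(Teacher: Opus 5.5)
Your proof is correct and follows the paper's route. The paper simply cites the extreme value theorem for the continuous function $c(s,\cdot)$ on the compact set $O$ and concludes $o_\ast\in\Mean(s)$; your open-cover argument just reproves that theorem. You are also right that $O\neq\emptyset$ is an implicit hypothesis: if $O=\emptyset$, then $\Mean(s)=\emptyset$.

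One small slip needs fixing. The sets $U_t=\{o: f(o)>t\}$ shrink as $t$ increases, so a finite union $U_{t_1}\cup\cdots\cup U_{t_r}$ equals $U_{t_\ast}$ with $t_\ast=\min_i t_i$, not the largest threshold. Since $\min_i t_i>m$ still holds, you still get $f>t_\ast>m$ on all of $O$, and the contradiction goes through unchanged.
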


\begin{proof}
By continuity on a compact set, $c(s,\cdot)$ attains its minimum at some $o_\ast\in O$
(extreme value theorem).
Hence
\[
c(s,o_\ast)=\inf_{o\in O}c(s,o),
\]
so $o_\ast\in\Mean(s)$ by Definition~\ref{def:meaning}.
\end{proof}

In many applications $O$ is not finite but is a closed subset of a Euclidean space; in that case one can ensure attainment by a standard coercivity hypothesis.

\begin{proposition}[Attainment under coercivity]\label{prop:meaning-coercive}
Fix $s\in S$ and assume $O\subset\R^m$ is nonempty and closed for some $m\ge 1$.
Assume $o\mapsto c(s,o)$ is lower semicontinuous on $O$ and \emph{coercive} on $O$ in the sense that
\[
c(s,o)\to\infty \quad\text{whenever}\quad \norm{o}\to\infty\ \text{ with }o\in O.
\]
Then $\Mean(s)$ is nonempty.
\end{proposition}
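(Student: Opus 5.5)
The plan is the direct method of the calculus of variations: reduce to a minimization over a compact sublevel set and then invoke Proposition~\ref{prop:meaning-compact} (or repeat its argument with lower semicontinuity in place of continuity). Fix any $o_0\in O$, which exists since $O\neq\emptyset$, and set $\alpha:=c(s,o_0)<\infty$. Consider the sublevel set
\[
L_\alpha:=\{o\in O:\ c(s,o)\le \alpha\}.
\]
It is nonempty because it contains $o_0$. Moreover, every minimizer over $O$, if one exists, lies in $L_\alpha$, and conversely a minimizer of $c(s,\cdot)$ over $L_\alpha$ is a minimizer over $O$, since points outside $L_\alpha$ have cost strictly above $\alpha\ge\min_{L_\alpha}c(s,\cdot)$.

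The key step is to show that $L_\alpha$ is compact, using Heine--Borel in $\R^m$.

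\emph{Boundedness.} This comes from coercivity. If $L_\alpha$ were unbounded, we could pick $o_k\in L_\alpha$ with $\norm{o_k}\to\infty$. Coercivity would then force $c(s,o_k)\to\infty$, contradicting $c(s,o_k)\le\alpha$.

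\emph{Closedness.} This comes from lower semicontinuity together with closedness of $O$. Take $o_k\in L_\alpha$ with $o_k\to o$. Then $o\in O$ because $O$ is closed, and
\[
c(s,o)\le\liminf_k c(s,o_k)\le\alpha,
\]
so $o\in L_\alpha$.

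Finally, lower semicontinuity on the compact set $L_\alpha$ gives attainment of the minimum. Set $m_\ast:=\inf_{L_\alpha}c(s,\cdot)\ge 0$; it is finite because $c\ge 0$. Take a minimizing sequence in $L_\alpha$ and extract a convergent subsequence by compactness, with limit $o_\ast\in L_\alpha$. Lower semicontinuity yields $c(s,o_\ast)\le m_\ast$, hence $c(s,o_\ast)=m_\ast$. By the reduction in the first paragraph, $m_\ast=\inf_{O}c(s,\cdot)$, so $o_\ast\in\Mean(s)$ by Definition~\ref{def:meaning}.

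Proposition~\ref{prop:meaning-compact} cannot be cited verbatim, since it assumes continuity, so this last step has to be written out in its semicontinuous form. Apart from that, the only point needing care is the reduction showing that the infimum over $L_\alpha$ equals the infimum over $O$. Both steps are routine.
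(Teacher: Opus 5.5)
Your proof is correct and is essentially the paper's argument: the direct method, in which coercivity gives boundedness, Heine--Borel (Bolzano--Weierstrass) gives a convergent subsequence, closedness of $O$ keeps the limit in $O$, and lower semicontinuity gives attainment. The only difference is packaging: you first prove the sublevel set $L_\alpha$ is compact and minimize there, whereas the paper applies coercivity directly to a minimizing sequence over all of $O$.
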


\begin{proof}
Let $\alpha:=\inf_{o\in O}c(s,o)$ and choose a minimizing sequence $(o_k)\subset O$ with $c(s,o_k)\downarrow\alpha$.
By coercivity, the sequence $(o_k)$ is bounded; thus it lies in $O\cap \overline{B_R}$ for some $R$.
By compactness of $\overline{B_R}$, there is a convergent subsequence $o_{k_j}\to o_\ast$.
Since $O$ is closed, $o_\ast\in O$.
Lower semicontinuity gives $c(s,o_\ast)\le \liminf_{j\to\infty} c(s,o_{k_j})=\alpha$, hence $c(s,o_\ast)=\alpha$ and $o_\ast\in\Mean(s)$.
\end{proof}

\begin{definition}\label{def:sound}
Fix $s\in S$ and write $c_s(o):=c(s,o)$ and $\mathcal{C}_s(o):=\mathcal{C}(s,o)$.
Assume throughout that the minimizer sets are nonempty (e.g.\ $O$ finite, or by Proposition~\ref{prop:meaning-compact} or Proposition~\ref{prop:meaning-coercive}).
A certificate $\mathcal{C}$ is
\begin{enumerate}[label=\textup{(\roman*)},leftmargin=2.2em]
\item \emph{minimizer-sound} if $\arg\min_{o\in O}\mathcal{C}_s(o)=\arg\min_{o\in O}c_s(o)$ for every $s$;
\item \emph{ordinally sound} if for every $s$ and all $o_1,o_2\in O$,
\[
\mathcal{C}_s(o_1)\le \mathcal{C}_s(o_2)\quad\Longleftrightarrow\quad c_s(o_1)\le c_s(o_2).
\]
\end{enumerate}
Ordinal soundness is strictly stronger than minimizer-soundness; it asserts that $\mathcal{C}_s$ induces
exactly the same preorder on candidates as the true cost $c_s$.
\end{definition}

\subsection{Sound procedures and domination}\label{subsec:domination}

We formulate the finite-data decision task as a three-valued procedure based on window aggregates.
Fix integers $W\ge 1$ and $K\ge 1$. For a signal $y=(y_n)_{n\ge 0}$ define the $W$-block window-sum
vector
\[
\mathcal{W}_{W,K}(y):=\big(\mathcal{W}(y)_0,\dots,\mathcal{W}(y)_{K-1}\big)\in\R^{K},
\qquad
\mathcal{W}(y)_k:=\sum_{j=0}^{W-1}y_{(Wk+j)}.
\]
We write the corresponding set of realizable window vectors as
\[
\mathcal{W}_{W,K}(\mathcal{M}_{d,W}):=\{\mathcal{W}_{W,K}(y):y\in\mathcal{M}_{d,W}\}\subset\R^{K}.
\]
A \emph{finite-data procedure} is a map
\[
\Psi:\R^{K}\to\{\zero,\nonzero,\inconclusive\}.
\]

\begin{definition}\label{def:domination}
Fix a model class $\mathcal{M}_{d,W}$ of signals (e.g.\ rational/finite-state of degree $\le d$) together with the
nondegeneracy hypotheses from Theorem~\ref{thm:window-ident-global}.
Let $y_{\mathrm{neu}}\in\mathcal{M}_{d,W}$ denote the distinguished neutral signal (one may take $y_{\mathrm{neu}}\equiv 0$).
We compare procedures only on realizable window vectors $w\in\mathcal{W}_{W,K}(\mathcal{M}_{d,W})$; outside this set a procedure may return $\inconclusive$ arbitrarily.
A procedure $\Psi$ is \emph{sound on $\mathcal{M}_{d,W}$} if for every $y\in\mathcal{M}_{d,W}$,
\begin{align*}
\Psi(\mathcal{W}_{W,K}(y))=\zero &\ \Longrightarrow\ y=y_{\mathrm{neu}},\\
\Psi(\mathcal{W}_{W,K}(y))=\nonzero &\ \Longrightarrow\ y\neq y_{\mathrm{neu}}.
\end{align*}
Its \emph{resolves set} is
\[
\operatorname{Res}(\Psi):=\{y\in\mathcal{M}_{d,W}:\Psi(\mathcal{W}_{W,K}(y))\neq \inconclusive\}.
\]
Given two sound procedures $\Psi_1,\Psi_2$ on $\mathcal{M}_{d,W}$, we say that $\Psi_1$ \emph{dominates} $\Psi_2$
if $\operatorname{Res}(\Psi_2)\subseteq \operatorname{Res}(\Psi_1)$ and
\[
\Psi_1(\mathcal{W}_{W,K}(y))=\Psi_2(\mathcal{W}_{W,K}(y))\qquad\text{for all }y\in \operatorname{Res}(\Psi_2).
\]
\end{definition}

\begin{definition}[Pipeline components]\label{def:pipeline-components}
Fix integers $W\ge 1$ and $K\ge 1$ and consider realizable window data $w\in\mathcal{W}_{W,K}(\mathcal{M}_{d,W})$.
We use the following (possibly partial) operations.
\begin{enumerate}[label=\textup{(\alph*)},leftmargin=2.2em]
\item \textbf{Aggregation / reconstruction ($A$).} On the identifiability locus from Theorem~\ref{thm:window-ident-global}, define
a partial map $A:\mathcal{W}_{W,K}(\mathcal{M}_{d,W})\dashrightarrow \mathcal{M}_{d,W}$ by letting $A(w)$ be the (locally) unique
signal $y$ (or parameter $\pi$) with $\mathcal{W}_{W,K}(y)=w$, when such a unique realization exists; otherwise $A(w)$ is undefined.
\item \textbf{Projection ($P$).} For a positive vector/configuration $x\in(\Rp)^n$ with $y=\log x$, set $P(y)$ as in
\eqref{eq:Pdef}. In the pipeline we apply this projection in log-coordinates to the reconstructed object.
\item \textbf{Coercive decision layer ($B$).} For a projected log-vector $u\in\R^n$ (typically $u=P(y)$), define the
certificate value
\[
\mathcal{B}(u):=\sum_{i=1}^n J\!\left(e^{u_i}\right),
\]
and define $B(u)\in\{\zero,\nonzero\}$ by $B(u)=\zero$ if $\mathcal{B}(u)=0$ and $B(u)=\nonzero$ otherwise.
\end{enumerate}
The associated finite-data decision map is evaluated as follows: given window data $w$, if $A(w)$ is defined then set $x:=A(w)$, form the projected log-vector $u:=P(\log x)$, and output $B(u)$; if $A(w)$ is undefined we return $\inconclusive$.

\begin{theorem}[Forced $(P,B)$-dependence in the decision layer]\label{thm:forced-PB}
Let $S$ be any set and let $D:\R^n\to S$ be a map that is invariant under global shifts along $\mathbf 1$, i.e.\
\[
D(y+t\mathbf 1)=D(y)\qquad\text{for all }y\in\R^n,\ t\in\R.
\]
Then there exists a unique map $\widetilde D:\mathbf 1^\perp\to S$ such that
\[
D=\widetilde D\circ P,
\]
where $P$ is the orthogonal projection onto $\mathbf 1^\perp$ defined in \eqref{eq:Pdef}.
In particular, any defect-based decision rule whose conclusions depend only on scale-invariant information in log-coordinates can, without loss, be written as a map of the projected log-vector $u=P(\log x)$.
\end{theorem}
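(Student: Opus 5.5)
The plan is the standard universal-property argument for a quotient by a group action: define $\widetilde D$ as the restriction of $D$ to $\mathbf 1^\perp$, then check that it factors $D$ and that it is the only map that does.

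\emph{Existence.} Set $\widetilde D:=D|_{\mathbf 1^\perp}$. This makes sense because $P$ maps $\R^n$ into $\mathbf 1^\perp$: indeed $\langle P(y),\mathbf 1\rangle=\langle y,\mathbf 1\rangle-\bar y\,n=0$. By Definition~\ref{def:proj} we have $P(y)=y-\bar y\,\mathbf 1$, so $P(y)$ is obtained from $y$ by a global shift along $\mathbf 1$ with $t=-\bar y$. The invariance hypothesis then gives
\[
\widetilde D(P(y))=D(y-\bar y\,\mathbf 1)=D(y)
\]
for every $y\in\R^n$. Hence $D=\widetilde D\circ P$.

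\emph{Uniqueness.} Suppose $\widehat D:\mathbf 1^\perp\to S$ also satisfies $D=\widehat D\circ P$. The key observation is that $P$ restricted to $\mathbf 1^\perp$ is the identity: if $\langle u,\mathbf 1\rangle=0$ then $\bar u=0$, so $P(u)=u$. It follows that $P$ is surjective onto $\mathbf 1^\perp$. Therefore, for each $u\in\mathbf 1^\perp$,
\[
\widehat D(u)=\widehat D(P(u))=D(u)=\widetilde D(u),
\]
and so $\widehat D=\widetilde D$.

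For the concluding ``in particular'' sentence, I would apply the result with $D$ being the decision rule expressed as a function of $y=\log x$. Scale invariance $x\mapsto \lambda x$ is the same as the shift $y\mapsto y+(\log\lambda)\mathbf 1$, so the hypothesis of the theorem holds. The rule therefore factors through $u=P(\log x)$.

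There is no genuine obstacle in this argument. The only points that need care are verifying that $P$ lands in $\mathbf 1^\perp$ and that $P$ acts as the identity there; together these give both well-definedness of $\widetilde D$ and surjectivity of $P$ onto its image, which is what uniqueness requires.
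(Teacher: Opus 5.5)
Your proof is correct and follows essentially the same route as the paper: you use the shift $y\mapsto y-\bar y\,\mathbf 1$ together with invariance to get $D=D|_{\mathbf 1^\perp}\circ P$, and you get uniqueness from the surjectivity of $P$ onto $\mathbf 1^\perp$. Your explicit checks that $P$ lands in $\mathbf 1^\perp$ and acts as the identity there just spell out what the paper's proof takes for granted.
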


\begin{proof}
For any $y\in\R^n$ we have the orthogonal decomposition $y=P(y)+\avg y\,\mathbf 1$, hence by shift-invariance,
\[
D(y)=D(P(y)+\avg y\,\mathbf 1)=D(P(y)).
\]
Define $\widetilde D$ on $\mathbf 1^\perp$ by $\widetilde D(u):=D(u)$.
Then $D(y)=\widetilde D(P(y))$ for all $y$, i.e.\ $D=\widetilde D\circ P$.
Uniqueness follows because $P$ is surjective onto $\mathbf 1^\perp$.
The final sentence follows since scaling $x\mapsto cx$ corresponds to shifting $y=\log x$ by $\log(c)\,\mathbf 1$ and thus leaves $P(y)$ unchanged (cf.\ Lemma~\ref{lem:Pkernel} and Definition~\ref{def:defect}).
\end{proof}

\begin{theorem}[Structural non-redundancy on the identifiability locus]\label{thm:nonredundancy}
Fix $d\ge 0$, $W\ge 1$, and $K\ge 2d+1$ and let $\mathcal U_{d,W}$ be an identifiability neighborhood as in Theorem~\ref{thm:domination}.
\begin{enumerate}[label=(\alph*)]
\item (Need for $P$ at the decision layer.) Any decision map $D(\log x)$ that is sound with respect to the scale-invariant defect predicate (Definition~\ref{def:defect}) must be invariant under shifts along $\mathbf 1$, hence factors through $P$ as in Theorem~\ref{thm:forced-PB}.
\item (Need for a coercive certificate.) Any sound rule that certifies neutrality via a computable scalar statistic and is stable under two-sided perturbations must use (explicitly or implicitly) a coercive lower bound linking that statistic to $\mathrm{Def}(x)$; in our framework this role is played by the canonical certificate $\mathcal B(u)$ together with Theorem~\ref{thm:coercive} and the tolerance theorems (Theorems~\ref{thm:eps-tolerant}--\ref{thm:eps-cert}).
\item (Need for inversion from windows.) On $\mathcal U_{d,W}$, any sound finite-data rule $\Psi$ applied to realizable window data $w$ can depend only on the unique realization $y=A(w)$ (when it exists); equivalently, $\Psi$ factors through the reconstruction map $A$ on $\mathcal W_{W,K}(\mathcal U_{d,W})$.
\end{enumerate}
\end{theorem}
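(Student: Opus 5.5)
The theorem has three loosely coupled parts, and I would prove them separately, reducing each to a result already stated.

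\textbf{Part (a).} The defect predicate of Definition~\ref{def:defect} depends on $y=\log x$ only through $P(y)$. Since $P(y+t\mathbf 1)=P(y)$, the predicate ``$\mathrm{Def}(x)=0$'' is unchanged under $x\mapsto e^{t}x$.

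I would make ``sound with respect to the scale-invariant defect predicate'' precise as follows: the conclusions of $D$ are determined by the truth value of that predicate on each scale orbit. Equivalently, $D$ must be constant on $\{y+t\mathbf 1:t\in\R\}$, since any two points of one orbit are indistinguishable by the predicate. A rule distinguishing them would assert something about $x$ beyond the predicate. Once shift-invariance is recorded, Theorem~\ref{thm:forced-PB} gives the factorization $D=\widetilde D\circ P$ verbatim.

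\textbf{Part (b).} This is the least formal part, and I expect it to be the main obstacle. Concretely, ``stable under two-sided perturbations'' needs a usable definition before anything can be proved. I would formalize it as follows. A scalar statistic $T\ge 0$ with $T=0$ iff $\mathrm{Def}=0$ is \emph{stable} if $T(u)\le\eta$ forces $\mathrm{Def}\le\rho(\eta)$ for some modulus $\rho$ with $\rho(\eta)\to0$.

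Under that definition, setting $\psi(r):=\inf\{T(u):\norm{u}_2\ge r\}$ gives $\psi(r)>0$ for $r>0$ and $T(u)\ge\psi(\norm{u}_2)$. This is exactly an (implicit) coercive lower bound linking $T$ to $\mathrm{Def}$.

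I would then note that for the canonical certificate $\mathcal B$, Theorem~\ref{thm:coercive} supplies $\psi(r)=r^2/2$ explicitly. Theorem~\ref{thm:eps-tolerant} supplies the matching upper bound, so the two-sided tolerance is realized.

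\textbf{Part (c).} Take realizable window data $w\in\mathcal W_{W,K}(\mathcal U_{d,W})$. On $\mathcal U_{d,W}$, Theorem~\ref{thm:window-ident-general} makes the window map injective, after shrinking $\mathcal U_{d,W}$ inside a single inverse-function-theorem chart. Alternatively, Theorem~\ref{thm:window-ident-global} gives injectivity on the Prony locus. Hence $w$ has a unique preimage $y=A(w)$ in $\mathcal U_{d,W}$. Defining $\widehat\Psi(y):=\Psi(\mathcal W_{W,K}(y))$ then gives $\Psi=\widehat\Psi\circ A$ on $\mathcal W_{W,K}(\mathcal U_{d,W})$, which is the claimed factorization.

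The only care needed is to state that $\mathcal U_{d,W}$ is chosen so that $A$ is single-valued there. The factorization itself is tautological once injectivity holds.
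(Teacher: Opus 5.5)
Your (a) and (c) follow the paper's proof, and your (b) takes a genuinely different and more rigorous route. One justification in (c) is wrong, though the conclusion survives.

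\textbf{Part (a).} You make the same move as the paper: $\mathrm{Def}$ depends on $y$ only through $P(y)$, so Theorem~\ref{thm:forced-PB} applies once $D$ is shift-invariant. Your ``equivalently, $D$ is constant on $y+\R\mathbf 1$'' is a stipulation rather than an equivalence. With soundness as in Definition~\ref{def:domination}, which constrains only conclusive outputs, a rule returning $\zero$ at $y=0$ and $\inconclusive$ at $y=t\mathbf 1$ for $t\neq0$ is sound but not shift-invariant. The paper makes the same leap (``must give the same output on $x$ and $cx$''), so the weakness is shared, but it is your reading of ``sound'' that carries (a).

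\textbf{Part (b).} The paper only argues heuristically that any procedure with the same quantitative stability must invoke a comparable coercive implication. You instead fix a definition of stability ($T\le\eta\Rightarrow\mathrm{Def}\le\rho(\eta)$ with $\rho(\eta)\to0$). From it you derive the forcing function $\psi(r)=\inf\{T(u):\norm{u}_2\ge r\}$, positive on $(0,\infty)$, with $T\ge\psi(\mathrm{Def})$. This is correct: if $\psi(r)=0$ then $r\le\rho(\eta)$ for every $\eta>0$. Conversely, every nondecreasing $\psi$ positive on $(0,\infty)$ yields such a $\rho$, so you actually get an equivalence. This turns the paper's informal claim into a genuine lemma, at the cost of having to choose what ``stable'' means. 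Theorem~\ref{thm:coercive} then shows that $r^2/2$ is an admissible forcing function for $\mathcal B$.

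\textbf{Part (c).} You do not need to derive injectivity of $\mathcal W_{W,K}$ on $\mathcal U_{d,W}$. It is part of the hypothesis ``identifiability neighborhood as in Theorem~\ref{thm:domination}'', and with it the factorization $\Psi=\widehat\Psi\circ A$ is tautological, as you say. (The paper additionally uses soundness to note that conclusive values equal the truth value of the predicate on $A(w)$.) The two derivations you offer would not work.
\begin{enumerate}[label=\textup{(\roman*)},leftmargin=2.2em]
\item $\mathcal U_{d,W}$ is a neighborhood of $y_{\mathrm{neu}}\equiv0$. At every parameter point with $y_0=\dots=y_d=0$, the recurrence gives $y\equiv0$ whatever $q$ is, so the $q$-columns of $DF_{d,W}$ vanish. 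Hence every parameter representing $y_{\mathrm{neu}}$ lies outside $\Omega_{d,W}$ when $d\ge1$, and no inverse-function chart contains it. Shrinking $\mathcal U_{d,W}$ would also change the set in the statement.
\item Theorem~\ref{thm:window-ident-global} recovers the window-sum sequence $(S_k)$ from $(S_0,\dots,S_{2d-1})$, not the signal $y$. Its hypotheses also fail at $y\equiv0$, where the Hankel matrix is zero and no nonzero amplitudes exist.
\end{enumerate}
Replace that sentence with a citation of the standing injectivity hypothesis, and (c) is fine.
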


\begin{proof}
(a) If soundness is defined relative to the scale-invariant predicate ``neutral vs.\ non-neutral'' (Definition~\ref{def:defect}), then any sound decision rule must give the same output on $x$ and $cx$ whenever both are admissible, since they have the same defect value.
In log-coordinates this is invariance under $y\mapsto y+t\mathbf 1$, hence factorization through $P$ follows from Theorem~\ref{thm:forced-PB}.

(b) The tolerance conclusions in Theorems~\ref{thm:eps-tolerant}--\ref{thm:eps-cert} are derived from two-sided perturbation bounds together with a coercive inequality that forces $\mathrm{Def}(x)=0$ when the certificate vanishes.
Any procedure claiming the same quantitative stability must therefore invoke a comparable coercive implication (possibly with a different computable statistic).
In the present framework, $\mathcal B(u)$ is the canonical computable statistic and Theorem~\ref{thm:coercive} supplies the required implication.

(c) Let $w\in\mathcal W_{W,K}(\mathcal U_{d,W})$ and write $y$ for the unique realization in $\mathcal U_{d,W}$ with $\mathcal W_{W,K}(y)=w$.
If $\Psi(w)$ is conclusive, soundness forces that conclusion to match the truth-value of the predicate on $y$.
Thus on the image $\mathcal W_{W,K}(\mathcal U_{d,W})$ the rule $\Psi$ is determined by the realization $y$, i.e.\ it factors through the inverse map $A$ whenever $A$ is defined.
\end{proof}

For brevity we continue to denote this overall procedure by $\Phi^\ast=A\circ B\circ P$, with the understanding that $A$ is the reconstruction stage and that $\inconclusive$ is returned when reconstruction is not defined.
\end{definition}

\begin{proposition}\label{prop:pipeline-sound}
Fix $d\ge 0$, a window length $W\ge 1$, and $K\ge 2d+1$. Let $\mathcal M_{d,W}$ denote the degree-$\le d$
rational (finite-state) model class and let $\mathcal W_{W,K}:\mathcal M_{d,W}\to\R^K$ send a signal to its
first $K$ $W$-block window sums. On the non-degenerate identifiability locus singled out in
Theorem~\ref{thm:window-ident-global} (in particular, where the reconstruction step $A$ is well-defined by local invertibility / Hankel nondegeneracy), the canonical procedure
\[
\Phi^\ast \:=\ A\circ B\circ P
\]
is well-defined from $w=\mathcal W_{W,K}(y)$ and is sound in the sense of Definition~\ref{def:domination}.
\end{proposition}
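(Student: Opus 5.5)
The plan has two parts. First I show that $\Phi^\ast$ is well-defined as a function of the window vector $w$ alone. Then I check the two soundness implications of Definition~\ref{def:domination} separately, using the coercivity estimate of Theorem~\ref{thm:coercive} for the ``$\zero$'' branch and the definition of $\mathcal B$ for the ``$\nonzero$'' branch.

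\textbf{Well-definedness.} Let $w=\mathcal W_{W,K}(y)$ with $y$ in the non-degenerate locus. By Theorem~\ref{thm:window-ident-global} (Hankel invertibility, distinct nonzero nodes, nonzero amplitudes), together with the local statement of Theorem~\ref{thm:window-ident-general} on $\Omega_{d,W}$ for $K\ge 2d+1$, the realization compatible with $w$ is unique in the relevant neighborhood. Hence $A(w)$ is defined and equals $y$; it depends only on $w$ and not on a choice of preimage. The remaining steps are everywhere defined. $P$ is the linear map \eqref{eq:Pdef}. $\mathcal B(u)=\sum_i J(e^{u_i})$ is a finite sum of finite terms, and the test $\mathcal B(u)=0$ is a definite predicate. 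So $\Phi^\ast(w)\in\{\zero,\nonzero\}$ on that locus, and $\Phi^\ast$ returns $\inconclusive$ off it, which soundness allows.

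\textbf{The $\zero$ branch.} Suppose $\Phi^\ast(w)=\zero$. Then $\mathcal B(P(\log x))=0$. Theorem~\ref{thm:coercive} gives $\tfrac12\mathrm{Def}(x)^2\le\mathcal B(P(\log x))=0$, so $P(\log x)=0$. The conservation axiom (C2) gives $\langle\log x,\mathbf 1\rangle=0$, and then Lemma~\ref{lem:Pkernel} yields $\log x=0$. Identifying the reconstructed log-signal with $y$ (so $y=\log x$), this says $y=0=y_{\mathrm{neu}}$.

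\textbf{The $\nonzero$ branch.} Suppose $\Phi^\ast(w)=\nonzero$. Then $\mathcal B(u)>0$, so by Lemma~\ref{lem:logform} some $u_i\neq0$ and hence $P(y)\neq0$. If we had $y=y_{\mathrm{neu}}\equiv0$, then $P(y)=0$, a contradiction. Therefore $y\ne y_{\mathrm{neu}}$.

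\textbf{Main obstacle: bookkeeping between objects.} The steps above are short; the delicate part is keeping straight which object is which. The reconstruction returns a signal $y$ (the first $n$ coefficients, as in Proposition~\ref{prop:rec-exists}), while $P$ and $B$ act on $\log x$. I will fix the convention that the reconstructed coefficient vector is itself the log-coordinate vector, $x=e^{y}$ componentwise.

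The conservation constraint must hold on the model class for the $\zero$ branch to go through. Without it, $P(y)=0$ only gives that $y$ is constant, not that $y=0$. If $\sigma=0$ is not built into $\mathcal M_{d,W}$, the fallback is to read $y_{\mathrm{neu}}$ modulo constants, consistent with Theorem~\ref{thm:forced-PB}.

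Finally, uniqueness of $A(w)$ must be used in the strong form: every admissible $y'$ with the same window data equals $y$. Otherwise soundness could fail at a distant preimage, per Remark~\ref{rem:window-global}. I would therefore state explicitly that soundness is asserted for $y$ ranging over the locus on which $A$ is injective.
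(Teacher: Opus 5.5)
Your proposal is correct and follows the same route as the paper's proof. Theorem~\ref{thm:coercive} shows that a vanishing certificate $\mathcal B(P(\log x))=0$ forces $P(\log x)=0$, and uniqueness of the realization on the identifiability locus makes $A(w)$ the true signal, so the $B$-verdict transfers to $y$. You are more explicit than the paper, which leaves tacit the $\nonzero$ branch, the use of (C2) with Lemma~\ref{lem:Pkernel} to pass from zero defect to $x\equiv 1$ (i.e.\ $y=y_{\mathrm{neu}}$), and the need for genuine injectivity of $\mathcal W_{W,K}$ on the locus rather than mere local invertibility.
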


\begin{proof}
The projection step $P$ is the mean-zero projection in log-coordinates (Definition~\ref{def:proj}).
By Definition~\ref{def:defect}, $\mathrm{Def}(x)=\|P(\log x)\|_2$, hence $\mathrm{Def}(x)=0$ if and only if $P(\log x)=0$.
Since $P$ is a projection, $P(P(\log x))=P(\log x)$. The coercive bound (Theorem~\ref{thm:coercive})
implies that vanishing projected cost forces $P(\log x)=0$, hence $\mathrm{Def}(x)=0$, giving soundness of the $B$ step.
Finally, on the identifiability locus, the aggregation step $A$ (reconstruction from $W$-block window sums)
does not introduce spurious neutral realizations: if the reconstructed projected cost is $0$, then the unique
signal compatible with the observed $w$ is neutral. Hence $\Phi^\ast$ is a sound finite-data procedure.
\end{proof}

\begin{remark}\label{rem:domination-intuition}
Soundness prevents false positives/negatives; domination compares how often a sound procedure can decide.
A dominating procedure is therefore ``best possible'' among sound procedures for the same finite-data.
\end{remark}

\begin{lemma}\label{lem:sound-factors}

Let $\mathcal{U}_{d,W}\subset\mathcal{M}_{d,W}$ be a neighborhood of $y_{\mathrm{neu}}$ such that the restriction
$\mathcal{W}_{W,K}\vert_{\mathcal{U}_{d,W}}$ is injective (equivalently: for each realizable window vector
$w\in\mathcal{W}_{W,K}(\mathcal{U}_{d,W})$ there is at most one $y\in\mathcal{U}_{d,W}$ with $\mathcal{W}_{W,K}(y)=w$).
Let $\Psi:\R^K\to\{\zero,\nonzero,\inconclusive\}$ be sound on $\mathcal{M}_{d,W}$.
Then whenever $w$ is realized by some $y\in\mathcal{U}_{d,W}$, the value $\Psi(w)$ is forced solely by the
truth-value of the underlying predicate on that unique realization (neutral vs.\ non-neutral), and cannot
disagree with any other sound procedure on the same model class when restricted to $\mathcal{U}_{d,W}$.

\end{lemma}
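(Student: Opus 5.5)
The argument is a direct unpacking of the soundness clauses in Definition~\ref{def:domination}, combined with the injectivity hypothesis on $\mathcal{U}_{d,W}$.

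\emph{Step 1.} Fix a realizable $w\in\mathcal{W}_{W,K}(\mathcal{U}_{d,W})$. By the injectivity of $\mathcal{W}_{W,K}\vert_{\mathcal{U}_{d,W}}$ there is exactly one $y_w\in\mathcal{U}_{d,W}$ with $\mathcal{W}_{W,K}(y_w)=w$. Existence holds because $w$ is realized by assumption.

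\emph{Step 2.} Do a case analysis on the output $\Psi(w)$.
\begin{itemize}
\item If $\Psi(w)=\zero$, the first soundness implication, applied to $y_w\in\mathcal{M}_{d,W}$, gives $y_w=y_{\mathrm{neu}}$.
\item If $\Psi(w)=\nonzero$, the second implication gives $y_w\neq y_{\mathrm{neu}}$.
\end{itemize}
So any conclusive value of $\Psi(w)$ equals the truth value of the predicate ``$y_w=y_{\mathrm{neu}}$''. More precisely, $\Psi(w)\in\{\tau(y_w),\inconclusive\}$, where $\tau(y)=\zero$ if $y=y_{\mathrm{neu}}$ and $\tau(y)=\nonzero$ otherwise. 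This proves the first assertion: $\Psi(w)$ is forced by the predicate on the unique realization, up to the option of abstaining.

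\emph{Step 3.} Let $\Psi'$ be another sound procedure and take $y\in\mathcal{U}_{d,W}$ resolved by both. Applying Step~2 to each procedure with the same $w=\mathcal{W}_{W,K}(y)$ and the same $y_w=y$ gives
\[
\Psi(w)=\tau(y)=\Psi'(w).
\]
Hence the two procedures never give contradictory conclusive answers on $\mathcal{U}_{d,W}$. This is the precise sense of ``cannot disagree''. They may differ only in whether they return $\inconclusive$.

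\emph{Caveat on where injectivity enters.} Soundness as defined in Definition~\ref{def:domination} quantifies over all of $\mathcal{M}_{d,W}$. Because of this, Step~2 does not actually require injectivity: even if another $y'\in\mathcal{M}_{d,W}$ with the same window vector existed outside $\mathcal{U}_{d,W}$, soundness would only constrain $\Psi$ further, never contradict the conclusion. The local injectivity hypothesis is what makes ``the unique realization'' well defined, and so what makes $\Psi$ a function of $y_w$ on $\mathcal{W}_{W,K}(\mathcal{U}_{d,W})$. I will state this explicitly so that the dependence on the hypothesis is transparent.

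The main obstacle is interpretive rather than technical: the phrase ``cannot disagree'' has to be read as agreement on jointly resolved instances. It cannot mean equality of outputs everywhere, since one procedure may abstain where another does not.
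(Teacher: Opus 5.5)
Your proposal is correct and follows the paper's own argument. You apply the two soundness implications at the realization $y_w\in\mathcal{U}_{d,W}$ to pin down any conclusive output, and you use uniqueness of that realization so two sound rules can differ only by abstaining. Your caveat is also accurate and slightly sharper than the paper, which credits injectivity with preventing contradictory outputs; as you note, agreement on jointly resolved $y$ already follows from soundness on $\mathcal{M}_{d,W}$, and injectivity is needed only to speak of ``the'' realization.
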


\begin{proof}
If $w=\mathcal{W}_{W,K}(y)$ and $\Psi(w)=\zero$, soundness forces $y$ to be neutral; if $\Psi(w)=\nonzero$,
soundness forces $y$ to be non-neutral. Uniqueness of the realization for $w$ makes these implications
well-defined on the image of $\mathcal{W}_{W,K}$ and prevents contradictory sound outputs.
\end{proof}

We now state the \emph{local optimality} (domination) property of the canonical finite-data decision rule.
Write $\operatorname{Res}(\Psi)$ for the set of realizable instances on which a rule $\Psi$ returns a conclusive value in $\{\zero,\nonzero\}$.
We say that $\Phi$ \emph{dominates} $\Psi$ on a locus $\mathcal{V}$ if $\operatorname{Res}(\Psi)\cap\mathcal{V}\subseteq\operatorname{Res}(\Phi)\cap\mathcal{V}$ and the two rules agree on $\operatorname{Res}(\Psi)\cap\mathcal{V}$.
Informally, the rule first projects to the mean-zero subspace, then performs a window-based test, and finally outputs a ternary decision. The conclusion below is local: domination is asserted only on the identifiability neighborhood $\mathcal{U}_{d,W}$.

\begin{theorem}\label{thm:domination}
Fix integers $d\ge 0$, $W\ge 1$, and $K\ge 2d+1$.
Let $\mathcal{M}_{d,W}$ be a non-degenerate (open dense) parameter locus, and let
$\mathcal{U}_{d,W}\subset\mathcal{M}_{d,W}$ be a neighborhood of $y_{\mathrm{neu}}$ on which the restricted window map
$\mathcal{W}_{W,K}\vert_{\mathcal{U}_{d,W}}$ is injective.
Assume:

\begin{enumerate}[label=\textup{(\alph*)},leftmargin=2.2em]
\item for every $y\in\mathcal{U}_{d,W}$ the window-sum sequence $S_k:=\mathcal{W}(y)_k$ satisfies the hypotheses of Theorem~\ref{thm:window-ident-global}, so the data $(S_0,\dots,S_{2d-1})$ determine the induced window-sum process uniquely on $\mathcal{U}_{d,W}$; and
\item the realizable datum $w=\mathcal{W}_{W,K}(y)$ determines the neutral/non-neutral truth value on $\mathcal{U}_{d,W}$, i.e.\ \[
\mathcal{W}_{W,K}(y)=\mathcal{W}_{W,K}(y_{\mathrm{neu}})\ \Longrightarrow\ y=y_{\mathrm{neu}}.
\]
\end{enumerate}

Let $\Phi^\ast:\mathcal{W}_{W,K}(\mathcal{M}_{d,W})\to\{\zero,\nonzero,\inconclusive\}$ denote the canonical procedure $A\circ B\circ P$ defined in this section.

Then for every sound procedure $\Psi:\R^K\to\{\zero,\nonzero,\inconclusive\}$ one has
$\operatorname{Res}(\Psi)\cap\mathcal{U}_{d,W}\subseteq \operatorname{Res}(\Phi^\ast)\cap\mathcal{U}_{d,W}$, and for every
$y\in\operatorname{Res}(\Psi)\cap\mathcal{U}_{d,W}$ the outputs agree on the realized datum $w=\mathcal{W}_{W,K}(y)$:
\[
\Psi(\mathcal{W}_{W,K}(y))=\Phi^\ast(\mathcal{W}_{W,K}(y)).
\]

\end{theorem}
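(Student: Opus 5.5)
The plan is to fix a sound procedure $\Psi$ and a signal $y\in\operatorname{Res}(\Psi)\cap\mathcal{U}_{d,W}$, write $w:=\mathcal{W}_{W,K}(y)$, and verify two things: that $\Phi^\ast(w)\neq\inconclusive$, and that $\Phi^\ast(w)=\Psi(w)$. The argument only uses the two hypotheses (a), (b) together with soundness; no property of $\Psi$ beyond soundness enters.

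\emph{Step 1: $\Phi^\ast$ resolves every instance in $\mathcal{U}_{d,W}$.} By injectivity of $\mathcal{W}_{W,K}\vert_{\mathcal{U}_{d,W}}$, the datum $w$ has exactly one realization in $\mathcal{U}_{d,W}$, namely $y$. Hypothesis (a) is what makes the reconstruction step $A$ of Definition~\ref{def:pipeline-components} well defined: through Theorem~\ref{thm:window-ident-global} the window process is uniquely recovered, and restricting to $\mathcal{U}_{d,W}$ then singles out $y$. We therefore set $A(w):=y$, which is defined, so $\Phi^\ast(w)=B(P(\cdot))\in\{\zero,\nonzero\}$. Consequently $y\in\operatorname{Res}(\Phi^\ast)$, and the inclusion $\operatorname{Res}(\Psi)\cap\mathcal{U}_{d,W}\subseteq\operatorname{Res}(\Phi^\ast)\cap\mathcal{U}_{d,W}$ holds trivially, since in fact $\mathcal{U}_{d,W}\subseteq\operatorname{Res}(\Phi^\ast)$.

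\emph{Step 2: $\Phi^\ast$ returns the truth on $\mathcal{U}_{d,W}$.} We show that $\Phi^\ast(w)=\zero$ if and only if $y=y_{\mathrm{neu}}$. The direction ``$\Phi^\ast(w)=\zero\Rightarrow y=y_{\mathrm{neu}}$'' is the soundness statement of Proposition~\ref{prop:pipeline-sound}. For the converse, if $y=y_{\mathrm{neu}}$ then the reconstruction is neutral, so $P(\log x)=0$, hence $\mathcal{B}=0$ and the output is $\zero$. For the direction $y\ne y_{\mathrm{neu}}\Rightarrow\nonzero$, we use hypothesis (b): a non-neutral $y$ has $w\neq\mathcal{W}_{W,K}(y_{\mathrm{neu}})$, so its reconstruction is non-neutral. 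Then $P(\log x)\neq0$, and by Lemma~\ref{lem:logform} together with Theorem~\ref{thm:coercive} we get $\mathcal{B}>0$, so the output is $\nonzero$.

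\emph{Step 3: agreement with $\Psi$.} Since $\Psi(w)\neq\inconclusive$, soundness of $\Psi$ (Lemma~\ref{lem:sound-factors}) forces $\Psi(w)=\zero$ exactly when $y=y_{\mathrm{neu}}$, and $\Psi(w)=\nonzero$ otherwise. Comparing with Step 2 gives $\Psi(w)=\Phi^\ast(w)$.

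The main obstacle is Step 2. The neutral signal $y_{\mathrm{neu}}\equiv0$ lives in log-coordinates, whereas $\Phi^\ast$ tests $P(\log x)$, and $P$ annihilates constants. So I must identify the neutral/non-neutral predicate for the reconstructed object with the condition $P(\cdot)=0$. I would handle this in one of two ways: either treat the reconstructed signal as the log-vector directly, or invoke the conservation axiom (C2) so that, by Lemma~\ref{lem:Pkernel}, $P(u)=0$ forces $u=0$. If (C2) is not available for window-reconstructed signals, I would instead read Definition~\ref{def:defect} as defining neutrality, which makes Step 2 immediate.
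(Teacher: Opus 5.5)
Your proposal is correct and takes essentially the same route as the paper. The datum $w$ has a unique realization $y$ in $\mathcal{U}_{d,W}$, $\Phi^\ast$ outputs the neutral/non-neutral truth value of that realization, and soundness forces any conclusive $\Psi(w)$ to be that same value. Where the paper simply asserts ``by construction'' that $\Phi^\ast$ evaluates this truth value, your Step 2 rightly flags that identifying $y=y_{\mathrm{neu}}$ with $P(\cdot)=0$ needs an extra input: either the conservation constraint (together with enough observed coordinates, e.g.\ $n\ge d+1$, so that the recurrence spreads vanishing to the whole sequence), or reading neutrality as zero defect.
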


\begin{proof}
Let $\Psi$ be any sound procedure on $\mathcal{M}_{d,W}$ and let $y\in\operatorname{Res}(\Psi)\cap\mathcal{U}_{d,W}$.
Write $w:=\mathcal{W}_{W,K}(y)$.

By assumption (a) and Theorem~\ref{thm:window-ident-global}, the realized datum $w$ determines the induced window-sum process (and hence its Prony parameters) uniquely on $\mathcal{U}_{d,W}$.
Assumption (b) guarantees that this information determines whether or not $y=y_{\mathrm{neu}}$ (equivalently: a neutral realization is compatible with $w$ if and only if $y=y_{\mathrm{neu}}$).

By construction, $\Phi^\ast$ evaluates exactly this truth value: it outputs $\zero$ exactly when the unique
reconstruction compatible with $w$ lies in the neutral \emph{zero-defect} class, and outputs $\nonzero$ exactly when
no neutral realization is compatible with $w$; otherwise it outputs $\inconclusive$.
Hence $\Phi^\ast$ is sound.

Now assume $y\in\operatorname{Res}(\Psi)\cap\mathcal{U}_{d,W}$. If $\Psi(w)=\zero$, then soundness forces $y=y_{\mathrm{neu}}$, so the unique
reconstruction from $w$ is neutral and therefore $\Phi^\ast(w)=\zero$.
If $\Psi(w)=\nonzero$, then $y\neq y_{\mathrm{neu}}$, so the unique reconstruction from $w$ is non-neutral and
$\Phi^\ast(w)=\nonzero$.
Thus whenever $\Psi$ resolves (in the sense of Definition~\ref{def:domination}), $\Phi^\ast$ resolves with the same answer, i.e.\ $\Phi^\ast$ dominates $\Psi$.
\end{proof}

\begin{corollary}[Global domination under global injectivity]\label{cor:domination-global}
Fix integers $d\ge 0$, $W\ge 1$, and $K\ge 2d+1$.
Assume:
\begin{enumerate}[label=\textup{(\alph*)},leftmargin=2.2em]
\item the window map $\mathcal{W}_{W,K}$ is injective on the parameter locus $\mathcal M_{d,W}$;
\item for every $y\in\mathcal M_{d,W}$, the induced window-sum sequence satisfies the hypotheses of Theorem~\ref{thm:window-ident-global};
\item the realizable datum determines global neutrality on $\mathcal M_{d,W}$, i.e.\ \[
\mathcal{W}_{W,K}(y)=\mathcal{W}_{W,K}(y_{\mathrm{neu}})\ \Longrightarrow\ y=y_{\mathrm{neu}}
\quad\text{for all }y\in\mathcal M_{d,W}.
\]
\end{enumerate}
Then for every sound procedure $\Psi:\R^K\to\{\zero,\nonzero,\inconclusive\}$,
\[
\operatorname{Res}(\Psi)\subseteq \operatorname{Res}(\Phi^\ast),
\]
and for all $y\in\operatorname{Res}(\Psi)$ one has
\[
\Psi(\mathcal W_{W,K}(y))=\Phi^\ast(\mathcal W_{W,K}(y)).
\]
\end{corollary}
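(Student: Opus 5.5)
The plan is to repeat the proof of Theorem~\ref{thm:domination}, with the neighborhood $\mathcal U_{d,W}$ replaced throughout by the whole locus $\mathcal M_{d,W}$. Global injectivity (a) takes over the role that local injectivity played there. Fix a sound procedure $\Psi$ and a point $y\in\operatorname{Res}(\Psi)$, and set $w:=\mathcal W_{W,K}(y)$.

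\textbf{Step 1: $\Phi^\ast$ is well defined at $w$.} By (a), $y$ is the unique element of $\mathcal M_{d,W}$ with $\mathcal W_{W,K}(y)=w$. Hence the reconstruction step $A$ of Definition~\ref{def:pipeline-components} is defined at $w$ and returns $A(w)=y$. Hypothesis (b), through Theorem~\ref{thm:window-ident-global}, guarantees that this reconstruction is computable from the first $2d\le K$ window sums. This places us on the nondegenerate locus where Proposition~\ref{prop:pipeline-sound} applies.

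\textbf{Step 2: $\Phi^\ast$ outputs the true neutrality value.} Since $A(w)$ is defined, $\Phi^\ast(w)=B(P(\log x))$ with $x=A(w)$, so it is never $\inconclusive$. By Theorem~\ref{thm:coercive} and Lemma~\ref{lem:Pkernel}, $\mathcal B(P(\log x))=0$ holds iff the projected log-vector vanishes. Under the conservation normalization $\sigma=0$, this holds iff the reconstruction is neutral, i.e.\ $y=y_{\mathrm{neu}}$. Hypothesis (c) ensures this characterization is consistent at the level of data: a neutral realization is compatible with $w$ only when $y=y_{\mathrm{neu}}$. Therefore $\Phi^\ast(w)=\zero$ iff $y=y_{\mathrm{neu}}$, and $\Phi^\ast(w)=\nonzero$ otherwise.

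\textbf{Step 3: $\Psi$ agrees with $\Phi^\ast$.} Since $y\in\operatorname{Res}(\Psi)$, the value $\Psi(w)$ is conclusive. If $\Psi(w)=\zero$, soundness gives $y=y_{\mathrm{neu}}$, so $\Phi^\ast(w)=\zero$ by Step~2. If $\Psi(w)=\nonzero$, soundness gives $y\ne y_{\mathrm{neu}}$, so $\Phi^\ast(w)=\nonzero$. In both cases $y\in\operatorname{Res}(\Phi^\ast)$ and the outputs agree, which is the claim. Lemma~\ref{lem:sound-factors}, applied with $\mathcal U_{d,W}=\mathcal M_{d,W}$ (legitimate by (a)), records the same reasoning.

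\textbf{Main obstacle.} The delicate point is Step~2: the identification of ``$\mathcal B(P(\log x))=0$'' with ``$y=y_{\mathrm{neu}}$''. This requires matching the signal-level neutral object $y_{\mathrm{neu}}\equiv0$ with the configuration $x\equiv1$. The match uses the convention $x=e^{y}$ on the reconstructed coefficients together with the conservation constraint (C2), which kills the constant mode that $P$ discards. I will state this identification explicitly, as it is implicit in Proposition~\ref{prop:pipeline-sound}. If the constant mode were not controlled by (C2), I would instead read $\Phi^\ast$ directly as testing $A(w)=y_{\mathrm{neu}}$, which (a) and (c) make legitimate.
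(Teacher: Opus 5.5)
Your proposal is correct and follows the paper's intended route. The paper gives no separate proof of this corollary; it is Theorem~\ref{thm:domination} with $\mathcal U_{d,W}:=\mathcal M_{d,W}$ (as in Corollary~\ref{cor:domination-on-Mdw}), where (a), (b), (c) supply the injectivity hypothesis and hypotheses (a), (b) of that theorem, and your Steps 1--3 simply unwind that theorem's proof. One caution: conservation is not imposed on $\mathcal M_{d,W}$ (a nonzero constant signal is rational of degree $1$ and has $P(y)=0$), so Step~2 is actually carried by your fallback reading of $\Phi^\ast$ as testing $A(w)=y_{\mathrm{neu}}$, which is exactly the paper's ``by construction'' step.
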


\begin{proposition}[Sharpness outside an identifiable class]\label{prop:sharpness-collision}
Let $\mathcal C$ be any class of signals and let $\mathcal W_{W,K}$ be the window map.
If there exist $y_0,y_1\in\mathcal C$ with $\mathcal W_{W,K}(y_0)=\mathcal W_{W,K}(y_1)$ but with opposite truth-values for the target predicate (e.g.\ $y_0$ neutral and $y_1$ non-neutral),
then no procedure $\Psi:\R^K\to\{\zero,\nonzero,\inconclusive\}$ that is sound on $\mathcal C$ can return a conclusive decision on the common datum $w:=\mathcal W_{W,K}(y_0)=\mathcal W_{W,K}(y_1)$.
\end{proposition}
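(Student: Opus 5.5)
The plan is a direct contradiction argument from the soundness clauses of Definition~\ref{def:domination}, applied to the class $\mathcal C$ in place of $\mathcal M_{d,W}$. The key observation is that a procedure $\Psi:\R^K\to\{\zero,\nonzero,\inconclusive\}$ sees only the datum $w$. Its output on $w$ is therefore a single value, and that value must be simultaneously compatible with every realization of $w$ in $\mathcal C$.

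Concretely, fix $\Psi$ sound on $\mathcal C$ and suppose, for contradiction, that $\Psi(w)\neq\inconclusive$, where $w=\mathcal W_{W,K}(y_0)=\mathcal W_{W,K}(y_1)$. There are two cases.

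\begin{enumerate}[label=\textup{(\roman*)},leftmargin=2.2em]
\item If $\Psi(w)=\zero$, apply soundness to $y_1$. Since $\Psi(\mathcal W_{W,K}(y_1))=\Psi(w)=\zero$, soundness forces $y_1$ to be neutral. This contradicts the assumption that $y_1$ is non-neutral.
\item If $\Psi(w)=\nonzero$, apply soundness to $y_0$. Since $\Psi(\mathcal W_{W,K}(y_0))=\nonzero$, soundness forces $y_0$ to be non-neutral. This contradicts the neutrality of $y_0$.
\end{enumerate}

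Both conclusive values are excluded, so $\Psi(w)=\inconclusive$. Equivalently, neither $y_0$ nor $y_1$ lies in $\operatorname{Res}(\Psi)$.

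For a general target predicate (not necessarily ``$y=y_{\mathrm{neu}}$''), I would phrase soundness as ``$\zero\Rightarrow$ predicate true, $\nonzero\Rightarrow$ predicate false''. The same two-case argument then goes through verbatim, with the labels of $y_0,y_1$ swapped if needed. This is the same mechanism as in Lemma~\ref{lem:sound-factors}, run in the opposite direction: that lemma uses injectivity of the window map to make soundness well defined, whereas here failure of injectivity across the predicate boundary makes soundness incompatible with any conclusive answer.

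There is no real obstacle here. The only point requiring care is a bookkeeping one: soundness in Definition~\ref{def:domination} is stated for the model class $\mathcal M_{d,W}$ and the neutral signal $y_{\mathrm{neu}}$. I will note explicitly that the definition is being instantiated with $\mathcal C$ and the abstract predicate, so that the implications may legitimately be applied to both $y_0$ and $y_1$.
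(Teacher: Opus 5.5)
Your proposal is correct and is essentially the paper's own proof. Both exclude $\Psi(w)=\zero$ by applying soundness to the non-neutral realization $y_1$, and exclude $\Psi(w)=\nonzero$ by applying soundness to the neutral realization $y_0$, leaving $\Psi(w)=\inconclusive$; your explicit note that soundness is instantiated on $\mathcal C$ with the abstract predicate is a harmless clarification the paper leaves implicit.
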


\begin{proof}
If $\Psi(w)=\zero$, soundness would force the realization to be neutral, contradicting soundness on $y_1$; similarly if $\Psi(w)=\nonzero$ it contradicts soundness on $y_0$.
Hence $\Psi(w)$ must be $\inconclusive$.
\end{proof}

\begin{corollary}[Domination on $\mathcal M_{d,W}$]\label{cor:domination-on-Mdw}
Theorem~\ref{thm:domination} applies with $\mathcal U_{d,W}:=\mathcal M_{d,W}$.
\end{corollary}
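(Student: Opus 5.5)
The plan is to check that the hypotheses of Theorem~\ref{thm:domination} hold verbatim when $\mathcal U_{d,W}$ is replaced by $\mathcal M_{d,W}$, and then apply that theorem as stated. The proof of Theorem~\ref{thm:domination} never uses that $\mathcal U_{d,W}$ is a \emph{small} neighborhood. It uses only three properties of the set $\mathcal U_{d,W}$:
\begin{enumerate}[label=\textup{(\arabic*)},leftmargin=2.2em]
\item injectivity of $\mathcal W_{W,K}$ on it;
\item the Prony/Hankel hypotheses of Theorem~\ref{thm:window-ident-global} for every $y$ in it;
\item the neutrality implication $\mathcal W_{W,K}(y)=\mathcal W_{W,K}(y_{\mathrm{neu}})\Rightarrow y=y_{\mathrm{neu}}$.
\end{enumerate}
So the whole task reduces to verifying these properties for $\mathcal M_{d,W}$. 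I would also note that $\mathcal M_{d,W}$ is trivially a neighborhood of $y_{\mathrm{neu}}$ inside itself, since it is open in itself and contains $y_{\mathrm{neu}}$ by Definition~\ref{def:domination}.

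\textbf{Step one: verify the hypotheses.} I would read the corollary in the context of Corollary~\ref{cor:domination-global}, i.e.\ under its assumptions (a)–(c). Assumption (a) of Corollary~\ref{cor:domination-global} gives global injectivity, which is property (1). Assumption (b) there gives hypothesis (a) of Theorem~\ref{thm:domination} on all of $\mathcal M_{d,W}$, which is property (2). Assumption (c) there gives hypothesis (b) of Theorem~\ref{thm:domination}, which is property (3).

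\textbf{Step two: apply the theorem.} With the hypotheses checked, I would rerun the final paragraph of the proof of Theorem~\ref{thm:domination} with $\mathcal U_{d,W}=\mathcal M_{d,W}$. Take any sound $\Psi$ and any $y\in\operatorname{Res}(\Psi)$, and set $w=\mathcal W_{W,K}(y)$. By soundness, $\Psi(w)=\zero$ forces $y=y_{\mathrm{neu}}$, and $\Psi(w)=\nonzero$ forces $y\neq y_{\mathrm{neu}}$.

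Injectivity makes $A(w)=y$ well defined. The canonical pipeline then computes $u=P(\log A(w))$ and tests whether $\mathcal B(u)=0$. By Theorem~\ref{thm:coercive} and Lemma~\ref{lem:Pkernel}, this test returns $\zero$ exactly when the realization is neutral. Hence $\Phi^\ast(w)=\Psi(w)$, and $y\in\operatorname{Res}(\Phi^\ast)$. This gives both the inclusion of resolve sets and the agreement of outputs, which is exactly the conclusion of Corollary~\ref{cor:domination-global} and its reformulation as Theorem~\ref{thm:domination} with $\mathcal U_{d,W}:=\mathcal M_{d,W}$.

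\textbf{Main obstacle.} The delicate point is interpretive rather than computational. Without the global injectivity hypothesis, the statement is false in general. Remark~\ref{rem:window-global} warns that only local identifiability holds, and Proposition~\ref{prop:sharpness-collision} shows that a collision between a neutral and a non-neutral signal forbids any conclusive sound output on their common datum. So I would state explicitly that the corollary is conditional on (a)–(c) of Corollary~\ref{cor:domination-global}, or more minimally on injectivity of $\mathcal W_{W,K}$ on $\mathcal M_{d,W}$.

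A second, minor check concerns the neutral point. In the log-coordinate normalization $y_{\mathrm{neu}}\equiv0$, so I would confirm that "neutral" for $A(w)$, meaning $P(\log x)=0$ under the conservation constraint (C2), coincides with $y=y_{\mathrm{neu}}$. This follows from the remark after Definition~\ref{def:defect}.
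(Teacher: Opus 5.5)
Your proposal is correct and follows the paper's route: the paper's proof just invokes Theorem~\ref{thm:domination} with $\mathcal U_{d,W}:=\mathcal M_{d,W}$, and your check of that theorem's hypotheses followed by a rerun of its final paragraph is that same application written out. Your point that the corollary only makes sense when injectivity of $\mathcal W_{W,K}$ on all of $\mathcal M_{d,W}$, the Prony/Hankel conditions and the neutrality implication hold there (the assumptions of Corollary~\ref{cor:domination-global}) is a correct precision that the paper's one-line proof leaves implicit.
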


\begin{proof}
Apply Theorem~\ref{thm:domination} with $\mathcal U_{d,W}:=\mathcal M_{d,W}$.
\end{proof}

\subsection{Sharpness boundary outside the rational class}\label{subsec:sharpness-outside-rational}

The collision principle of Proposition~\ref{prop:sharpness-collision} becomes an explicit
\emph{sharpness boundary} once we exhibit distinct latent configurations that produce the
\emph{same} finite-window data.
Here we do this directly for the window operator \eqref{eq:window} by enlarging the model class
from the rational (finite-state) family of Subsection~\ref{subsec:rational-class} to the ambient class of
all non-negative real sequences.

\paragraph{Ambient class.}
Let $\mathcal C_{\ge 0}$ be the set of all sequences $y=(y_n)_{n\ge 0}$ with $y_n\ge 0$ for all $n$.
For fixed $W\ge 1$ and $K\ge 0$, write
\[
\mathcal W_{W,K}(y):=\bigl(\mathcal W(y)_0,\dots,\mathcal W(y)_K\bigr)\in\R^{K+1}
\]
for the truncated window data.
Let $\mathcal C_{\mathrm{rat}}(d)\subset \mathcal C_{\ge 0}$ denote the subclass of sequences that are
rational of degree at most $d$ in the sense of Subsection~\ref{subsec:rational-class}.

\begin{theorem}[Non-identifiability of rational-class membership from finite windows]\label{thm:window-nonident-rational-membership}
Fix integers $d\ge 1$, $W\ge 1$, and $K\ge 0$.
Then the finite-data map $\mathcal W_{W,K}$ is not identifying for rational-class membership:
for every $y_{\mathrm{in}}\in\mathcal C_{\mathrm{rat}}(d)$ there exists
$y_{\mathrm{out}}\in\mathcal C_{\ge 0}\setminus\mathcal C_{\mathrm{rat}}(d)$ such that
\[
\mathcal W_{W,K}(y_{\mathrm{out}})=\mathcal W_{W,K}(y_{\mathrm{in}}).
\]
\end{theorem}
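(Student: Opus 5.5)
The plan is a tail-modification construction. It rests on one observation: the truncated window data $\mathcal W_{W,K}(y)=(\mathcal W(y)_0,\dots,\mathcal W(y)_K)$ depends only on the finitely many coefficients $y_0,\dots,y_{N-1}$, where $N:=W(K+1)$. This is immediate from \eqref{eq:window}, since window $k$ uses the indices $Wk,\dots,Wk+W-1\le W(K+1)-1$. Hence any two sequences that agree on indices $<N$ have identical window data, and it suffices to change $y_{\mathrm{in}}$ only on the tail $n\ge N$ so that the result is nonnegative but not rational.

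Concretely, given $y_{\mathrm{in}}\in\mathcal C_{\mathrm{rat}}(d)$, I will define
\[
y_{\mathrm{out},n}:=y_{\mathrm{in},n}\quad(n<N),\qquad y_{\mathrm{out},n}:=y_{\mathrm{in},n}+\frac{1}{(n-N)!}\quad(n\ge N).
\]
Nonnegativity holds because $y_{\mathrm{in}}\in\mathcal C_{\ge 0}$ and the added terms are positive. Equality $\mathcal W_{W,K}(y_{\mathrm{out}})=\mathcal W_{W,K}(y_{\mathrm{in}})$ follows from the first paragraph.

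The remaining step, and the only real content, is to show that $y_{\mathrm{out}}\notin\mathcal C_{\mathrm{rat}}(d)$. In fact I will show that its generating function is not rational of any degree. Suppose $\theta_{\mathrm{out}}(z)=\sum_n y_{\mathrm{out},n}z^n$ were a rational function. Then
\[
\theta_{\mathrm{out}}(z)-\theta_{\mathrm{in}}(z)=\sum_{n\ge N}\frac{z^n}{(n-N)!}=z^N e^{z}
\]
would also be rational, being a difference of two rational power series (a common denominator still has nonzero constant term). Dividing by $z^N$ would then make $e^z$ a rational function. This is impossible: $e^z$ is entire, so a rational representation would have no poles and would therefore be a polynomial, contradicting the fact that $e^z$ has infinitely many nonzero Taylor coefficients. (Equivalently, $e^z$ grows faster than any polynomial along the positive real axis.)

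I expect this non-rationality argument to be the main point needing care. The subtle issue is that the paper's notion of rationality uses power series with $q(0)\neq 0$. I therefore need to check that the sums, differences, and shifts used above stay within rational power series, which they do because rational functions regular at $0$ form a ring. Note that the construction uses neither $d\ge 1$ nor the specific value of $W$, so it works uniformly in these parameters.
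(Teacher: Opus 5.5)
Your proof is correct, and its architecture matches the paper's. Both arguments note that $\mathcal W_{W,K}$ only sees indices below $W(K+1)$ and add a nonnegative tail $r$ beyond that point. Both then use closure of rational series (regular at $0$) under differences to reduce to showing that $r$ itself is not rational.

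Where you diverge is the choice of tail and the non-rationality argument.

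The paper takes the sparse $0/1$ tail with $r_n=1$ exactly at $n=N+m^2$ and argues purely algebraically. By Lemma~\ref{lem:recurrence}, a rational $r$ would eventually satisfy a linear recurrence of some fixed order $D$. That recurrence fails at a square index whose gap $2m-1$ to the previous square exceeds $D$, since the left side is $1$ and all $D$ preceding terms vanish.

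You instead take $r_n=1/(n-N)!$, so that $\sum_n r_n z^n=z^Ne^z$. You then use the analytic fact that an entire rational function is a polynomial, or alternatively the super-polynomial growth of $e^x$. The passage from the formal identity $z^Ne^zQ(z)=P(z)$ to an identity of entire functions is legitimate, since both sides converge on all of $\mathbb{C}$.

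What each route buys:
\begin{itemize}
\item The paper's route needs no analysis, stays inside the recurrence framework already set up in the paper, and gives an integer-valued tail. Its lacunary tail, however, is not summable.
\item Your route uses a little complex analysis, but your tail is summable and decays super-exponentially. Replacing $r$ by $\eps r$ yields a non-rational $y_{\mathrm{out}}$ within $\ell^1$-distance $\eps e$ of $y_{\mathrm{in}}$, whose generating function is still meromorphic on $\mathbb{C}$. This shows the collision persists under arbitrarily small, rapidly decaying perturbations.
\end{itemize}
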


\begin{proof}
Let $y_{\mathrm{in}}\in\mathcal C_{\mathrm{rat}}(d)$ and set $N:=W(K+1)-1$, so that the data
$\mathcal W_{W,K}(y)$ depend only on the coordinates $y_0,\dots,y_N$.
Define a non-negative ``tail'' sequence $r=(r_n)_{n\ge 0}$ by
\[
r_n:=\begin{cases}
0, & 0\le n\le N,\\
1, & n=N+m^2 \text{ for some integer } m\ge 1,\\
0, & \text{otherwise}.
\end{cases}
\]
Set $y_{\mathrm{out}}:=y_{\mathrm{in}}+r$, so $y_{\mathrm{out}}\in\mathcal C_{\ge 0}$ and
$y_{\mathrm{out},n}=y_{\mathrm{in},n}$ for all $0\le n\le N$.
Hence $r$ contributes nothing to the first $K+1$ windows, and therefore
$\mathcal W_{W,K}(y_{\mathrm{out}})=\mathcal W_{W,K}(y_{\mathrm{in}})$.

It remains to show $y_{\mathrm{out}}\notin \mathcal C_{\mathrm{rat}}(d)$.
Suppose toward a contradiction that $y_{\mathrm{out}}\in \mathcal C_{\mathrm{rat}}(d)$.
Since also $y_{\mathrm{in}}\in \mathcal C_{\mathrm{rat}}(d)$, their difference
$r=y_{\mathrm{out}}-y_{\mathrm{in}}$ has a rational generating function as well (difference of two rational functions),
hence $r$ satisfies a linear recurrence of some finite order $D$ for all sufficiently large indices:
there exist $b_1,\dots,b_D\in\R$ and $n_0$ such that
\begin{equation}\label{eq:linrec-r}
r_n=b_1 r_{n-1}+\cdots+b_D r_{n-D}\qquad\text{for all }n\ge n_0.
\end{equation}
Choose $m$ so large that $n:=N+m^2\ge n_0$ and the gap
\[
n-(N+(m-1)^2)=2m-1>D.
\]
Then $r_n=1$ while $r_{n-1}=\cdots=r_{n-D}=0$ (because there is no other square index within distance $D$),
so \eqref{eq:linrec-r} forces $1=0$, a contradiction. Therefore $r$ is not rational, hence
$y_{\mathrm{out}}$ cannot be rational of degree $\le d$.

\end{proof}

\begin{corollary}[Unconditional sharpness boundary outside the rational class]\label{cor:window-sharpness-unconditional}
Fix $d\ge 1$, $W\ge 1$, and $K\ge 0$.
Let $T(y):=\mathbf 1_{\{y\in\mathcal C_{\mathrm{rat}}(d)\}}$ be the predicate ``$y$ is rational of degree $\le d$''.
Then there exist finite-window data $w\in\R^{K+1}$ for which no procedure that is sound for $T$
(on the ambient class $\mathcal C_{\ge 0}$) can return a conclusive decision: every sound procedure must output \textsf{INC} on input $w$.
\end{corollary}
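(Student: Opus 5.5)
The plan is to combine Theorem~\ref{thm:window-nonident-rational-membership} with the collision principle of Proposition~\ref{prop:sharpness-collision}, applied with the ambient class $\mathcal C:=\mathcal C_{\ge 0}$ and the target predicate $T$.

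\textbf{Step 1: choose a rational witness.} Pick any $y_{\mathrm{in}}\in\mathcal C_{\mathrm{rat}}(d)$. The simplest choice is $y_{\mathrm{in}}\equiv 0$, whose generating function $\theta=0/1$ is rational of degree $\le d$ and whose entries are non-negative. Hence $y_{\mathrm{in}}\in\mathcal C_{\ge 0}$ and $T(y_{\mathrm{in}})=1$.

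\textbf{Step 2: produce a colliding non-rational signal.} Theorem~\ref{thm:window-nonident-rational-membership} gives $y_{\mathrm{out}}\in\mathcal C_{\ge 0}\setminus\mathcal C_{\mathrm{rat}}(d)$ with $\mathcal W_{W,K}(y_{\mathrm{out}})=\mathcal W_{W,K}(y_{\mathrm{in}})$, and $T(y_{\mathrm{out}})=0$. Set $w:=\mathcal W_{W,K}(y_{\mathrm{in}})\in\R^{K+1}$.

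\textbf{Step 3: apply the collision argument.} The two signals $y_{\mathrm{in}}$ and $y_{\mathrm{out}}$ realize the same datum $w$ but have opposite truth values of $T$. We reinterpret Proposition~\ref{prop:sharpness-collision} with ``neutral'' replaced by ``$T=1$'', so that soundness means:
\[
\Psi(\mathcal W_{W,K}(y))=\zero \Longrightarrow T(y)=1,\qquad \Psi(\mathcal W_{W,K}(y))=\nonzero \Longrightarrow T(y)=0,
\]
or with the labels swapped, according to whatever convention is fixed. If $\Psi(w)$ were one conclusive value, soundness applied to one of $y_{\mathrm{in}},y_{\mathrm{out}}$ would be violated. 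So $\Psi(w)$ must be the inconclusive output, written \textsf{INC} in the statement.

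\textbf{Main obstacle.} The only delicate point is this relabeling. Proposition~\ref{prop:sharpness-collision} is phrased for the neutral predicate and for data in $\R^K$, whereas here the data lie in $\R^{K+1}$. The proof of that proposition uses only soundness on each realization, so it transfers verbatim to any binary predicate and any output dimension. I would state this generalization explicitly, and note that the argument holds for every sound $\Psi$ simultaneously, since $w$ was chosen independently of $\Psi$.
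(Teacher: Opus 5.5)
Your proposal is correct and is essentially the paper's own proof: choose a rational $y_{\mathrm{in}}$, use Theorem~\ref{thm:window-nonident-rational-membership} to get a colliding non-rational $y_{\mathrm{out}}$ with the same window data $w$, and apply Proposition~\ref{prop:sharpness-collision}. The explicit witness $y_{\mathrm{in}}\equiv 0$, which makes $\mathcal C_{\mathrm{rat}}(d)$ visibly nonempty, and your note on the $\R^K$ versus $\R^{K+1}$ mismatch are small clarifications the paper leaves implicit; the proposition is already phrased for a general target predicate, so you need no relabeling there.
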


\begin{proof}
Pick any $y_{\mathrm{in}}\in\mathcal C_{\mathrm{rat}}(d)$ and set $w:=\mathcal W_{W,K}(y_{\mathrm{in}})$.
By Theorem~\ref{thm:window-nonident-rational-membership} there exists
$y_{\mathrm{out}}\notin\mathcal C_{\mathrm{rat}}(d)$ with $\mathcal W_{W,K}(y_{\mathrm{out}})=w$.
Thus $T(y_{\mathrm{in}})\neq T(y_{\mathrm{out}})$ while the finite-data coincide, so
Proposition~\ref{prop:sharpness-collision} forces any sound procedure to be inconclusive on $w$.
\end{proof}

Fix a costed space $(S,O,c)$ with meaning map $\Mean(s)=\arg\min_{o\in O}c(s,o)$.
In applications one typically does not compute $\Mean(s)$ by scanning all $o\in O$.
Instead, one seeks a \emph{certificate functional} $\mathcal{C}$ that:

\begin{itemize}
\item can be evaluated efficiently from $(s,o)$ (often through a low-dimensional
 proxy or windowed measurements), and
\item is \emph{sound}: $\mathcal{C}(s,o)$ attains its minimum (or maximum) exactly
 when $o\in\Mean(s)$.
\end{itemize}

The simplest sound certificates are \emph{separable} in the sense that they depend
on $(s,o)$ only through a canonical ratio (or a monotone transform thereof), so
that the certificate is equivalent to comparing values of $J$.

\subsection{Canonical rescalings}\label{subsec:canon-rescale}

We make precise what is meant by a ``canonical rescaling'' of a signal in the present model.
Assume that the multiplicative group $(0,\infty)$ acts on $S$ (written $\lambda\cdot s$) and that
the scale map $\iota_S:S\to(0,\infty)$ is \emph{equivariant} for this action:
\[
\iota_S(\lambda\cdot s)=\lambda\,\iota_S(s)\qquad(\lambda>0,\ s\in S).
\]
We call $s' \in S$ a \emph{canonical rescaling} of $s\in S$ if $s'=\lambda\cdot s$ for some $\lambda>0$.
This captures the idea that $s$ and $s'$ represent the same ``configuration'' up to overall magnitude.

In several arguments below we also use that the penalty $J$ is strictly monotone on one branch.
Accordingly, for a fixed $s\in S$ we say that the candidate class $O$ lies on a \emph{single $J$-branch for $s$} if either
$\iota_S(s)/\iota_O(o)\ge 1$ for all $o\in O$ or $\iota_S(s)/\iota_O(o)\le 1$ for all $o\in O$.
On such a branch, $J$ is strictly monotone and hence invertible.

\subsection{Forced separability for canonical reciprocal costs}

We now state the main rigidity claim in the simplest form that is directly used
later. The proof is a short functional argument: the coercivity estimate together with the reciprocity structure induced by the cost assumptions pin down the allowable comparisons between candidates $o\in O$.
Such order-preserving representation arguments are classical in the theory of functional equations; see \cite{aczel1966}.

\noindent This is a standard order-theoretic fact; we include it for completeness.
\begin{lemma}\label{lem:forced-factor}
Let $c:S\times O\to[0,\infty)$ be the ratio-induced cost $c(s,o):=J(\iota_S(s)/\iota_O(o))$.
Fix a certificate $\mathcal{C}:S\times O\to\R$.
Assume that $\mathcal{C}$ is \emph{ordinally sound} relative to $c$ in the sense of
Definition~\ref{def:sound}(ii).
Then for each $s\in S$ there exists a strictly increasing function
$\phi_s:\operatorname{im}(c_s)\to\R$ (where $c_s(o):=c(s,o)$) such that
\[
\mathcal{C}(s,o)=\phi_s(c(s,o))\qquad\text{for all }o\in O.
\]
In particular, on each fixed $s$, $\mathcal{C}(s,\cdot)$ is a strictly monotone reparametrization of $c(s,\cdot)$.
\end{lemma}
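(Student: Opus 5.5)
Fix $s\in S$ and define $\phi_s$ directly on $\operatorname{im}(c_s)$: for $t\in\operatorname{im}(c_s)$ choose any $o\in O$ with $c(s,o)=t$ and set $\phi_s(t):=\mathcal{C}(s,o)$. The whole proof rests on ordinal soundness (Definition~\ref{def:sound}(ii)), applied in both directions of the equivalence $\mathcal{C}_s(o_1)\le\mathcal{C}_s(o_2)\iff c_s(o_1)\le c_s(o_2)$. No property of $J$ beyond its being real-valued is needed. In particular, neither the coercivity of Theorem~\ref{thm:coercive} nor the log-form of Lemma~\ref{lem:logform} enters: the argument is purely order-theoretic.

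\textbf{Step 1 (well-definedness).} If $c_s(o_1)=c_s(o_2)=t$, then both $c_s(o_1)\le c_s(o_2)$ and $c_s(o_2)\le c_s(o_1)$ hold. By the backward direction of ordinal soundness, $\mathcal{C}_s(o_1)\le\mathcal{C}_s(o_2)$ and $\mathcal{C}_s(o_2)\le\mathcal{C}_s(o_1)$, so $\mathcal{C}_s(o_1)=\mathcal{C}_s(o_2)$. Hence $\phi_s(t)$ does not depend on the chosen preimage, and by construction $\mathcal{C}(s,o)=\phi_s(c(s,o))$ for every $o\in O$.

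\textbf{Step 2 (strict monotonicity).} Let $t_1<t_2$ in $\operatorname{im}(c_s)$, realized by $o_1,o_2$. From $c_s(o_1)\le c_s(o_2)$ we get $\phi_s(t_1)\le\phi_s(t_2)$. If equality held, then $\mathcal{C}_s(o_2)\le\mathcal{C}_s(o_1)$, and the forward direction of ordinal soundness would give $c_s(o_2)\le c_s(o_1)$, i.e.\ $t_2\le t_1$, a contradiction. Thus $\phi_s(t_1)<\phi_s(t_2)$.

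\textbf{Main obstacle.} The only delicate point is that well-definedness and strictness each need a different direction of the biconditional: well-definedness uses $\Leftarrow$, strictness uses $\Rightarrow$. This is precisely why minimizer-soundness alone would not suffice. The closing sentence of the lemma is then a restatement of the factorization with $\phi_s$ strictly increasing.
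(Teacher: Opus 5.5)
Your proposal is correct and is the paper's proof: define $\phi_s(r):=\mathcal{C}_s(o)$ for any $o$ with $c_s(o)=r$, and invoke ordinal soundness for well-definedness and strict monotonicity. The paper states those two properties in one line; you verify them explicitly, using the $\Leftarrow$ direction for well-definedness and both directions for strictness.
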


\begin{proof}
Fix $s\in S$. Ordinal soundness means $\mathcal{C}_s$ and $c_s$ induce the same preorder on $O$.
Define $\phi_s$ on $\operatorname{im}(c_s)$ by $\phi_s(r):=\mathcal{C}_s(o)$ for any $o$ with $c_s(o)=r$; this is well-defined and strictly increasing, and yields $\mathcal{C}_s=\phi_s\circ c_s$.
\end{proof}

\begin{proposition}\label{prop:rescale-stability}
Assume the hypotheses of Lemma~\ref{lem:forced-factor}.
Fix $s\in S$ and assume $O$ lies on a single $J$-branch for $s$ and for every canonical rescaling $s'=\lambda\cdot s$
(in the sense of Section~\ref{subsec:canon-rescale}).
Then for each such $s'=\lambda\cdot s$ there exists a strictly monotone function $\psi_{s,s'}:\R\to\R$ such that
\[
\mathcal{C}(s',o)=\psi_{s,s'}(\mathcal{C}(s,o))\qquad\text{for all }o\in O.
\]
In particular, canonical rescalings preserve the certificate ordering up to a strictly monotone reparametrization on any single $J$-branch.
\end{proposition}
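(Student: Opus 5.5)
The plan is to compose the two per-state representations of Lemma~\ref{lem:forced-factor} through the cost values, using the single-branch hypothesis to make $J$ invertible there. First, apply Lemma~\ref{lem:forced-factor} at $s$ and at $s'=\lambda\cdot s$. This gives strictly increasing maps $\phi_s$ on $\operatorname{im}(c_s)$ and $\phi_{s'}$ on $\operatorname{im}(c_{s'})$ with
\[
\mathcal{C}(s,o)=\phi_s(c(s,o)),\qquad \mathcal{C}(s',o)=\phi_{s'}(c(s',o)).
\]
Next, write $r(o):=\iota_S(s)/\iota_O(o)$. Equivariance gives $\iota_S(s')/\iota_O(o)=\lambda\, r(o)$, so $c(s,o)=J(r(o))$ and $c(s',o)=J(\lambda r(o))$.

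The key step is to show that $c(s',o)$ is a strictly monotone function of $c(s,o)$ on $O$. By hypothesis, $O$ lies on a single $J$-branch for $s$, i.e.\ all $r(o)$ lie in $[1,\infty)$ or all lie in $(0,1]$. On that branch $J$ is strictly monotone (increasing on $[1,\infty)$, decreasing on $(0,1]$), so it has an inverse $J_{b}^{-1}$ there and $r(o)=J_b^{-1}(c(s,o))$. The same reasoning for $s'$ places all $\lambda r(o)$ on a single branch $b'$, where $J$ is again strictly monotone. Therefore
\[
c(s',o)=h(c(s,o)),\qquad h:=J\circ(\lambda\,\cdot)\circ J_b^{-1},
\]
and $h$ is a composition of three strictly monotone maps, hence strictly monotone on $\operatorname{im}(c_s)$.

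Then set $\psi_{s,s'}:=\phi_{s'}\circ h\circ\phi_s^{-1}$ on $\operatorname{im}(\mathcal{C}_s)$. Since $\phi_s$ is a strictly increasing bijection onto its image, it has a strictly increasing inverse there. The map $\psi_{s,s'}$ is therefore strictly monotone, and $\mathcal{C}(s',o)=\psi_{s,s'}(\mathcal{C}(s,o))$ for all $o\in O$.

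The main obstacle is that the statement asks for $\psi_{s,s'}$ defined on all of $\R$, whereas the construction only gives it on the subset $\operatorname{im}(\mathcal{C}_s)\subset\R$. The first thing I would try is a standard monotone extension, defining
\[
\tilde\psi(t):=\sup\{\psi(u):u\le t,\ u\in\operatorname{im}\mathcal{C}_s\}+\arctan\text{-type corrections}
\]
in the increasing case (reversing inequalities in the decreasing case). More concretely, I would take the piecewise-linear interpolation across the gaps of the image, use a strictly increasing affine extension beyond its supremum and infimum, and add a small strictly increasing term on the gaps so that the extension stays strictly monotone. Some care is needed when the image has accumulation points, but monotone interpolation through sup/inf limits handles this routinely. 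If that extension becomes too messy, I would fall back on stating $\psi_{s,s'}$ on $\operatorname{im}(\mathcal{C}_s)$, which is all the identity requires.
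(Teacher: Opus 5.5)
Your core construction is the paper's proof: the paper sets $F_\lambda(t):=J(\lambda J^{-1}(t))$ and $\psi_{s,s'}:=\phi_{s'}\circ F_\lambda\circ\phi_s^{-1}$, which are your $h$ and your $\psi_{s,s'}$. Your version is slightly more careful than the paper's. You use a separate branch $b'$ for $s'$ and claim monotonicity of $h$ only on $\operatorname{im}(c_s)$. The paper instead asserts that $F_\lambda$ is monotone on all of $[0,\infty)$, which fails, e.g., on the upper branch with $\lambda<1$, where $\lambda J^{-1}(t)$ crosses $1$.

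\textbf{The extension to $\R$ is not routine.} It can be impossible in exactly the setting your argument uses, namely a single branch for $s$ and for the given $s'$. Take $S=(0,\infty)$ acting on itself by multiplication with $\iota_S=\mathrm{id}$, $O=(0,1]$ with $\iota_O=\mathrm{id}$, $s=1$ and $s'=2$. Then $r(o)=1/o\ge 1$ and $2r(o)\ge 2$, so both lie on the upper branch.

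Put $\mathcal{C}(1,o):=\arctan c(1,o)$ and $\mathcal{C}(\tilde s,o):=c(\tilde s,o)$ for $\tilde s\neq 1$; this is ordinally sound. Then $\operatorname{im}(\mathcal{C}_1)=[0,\pi/2)$. Any $\psi$ with $\mathcal{C}(2,o)=\psi(\mathcal{C}(1,o))$ for all $o$ must equal $t\mapsto J(2J^{-1}(\tan t))$ there, and this tends to $+\infty$ as $t\to\pi/2^-$. No strictly monotone real-valued map on $\R$ can do that, so your affine extension beyond the supremum has nothing to attach to.

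Bounded obstructions also exist. In the same example, take $\mathcal{C}(1,o):=\phi(c(1,o))$ with $\phi(v)=v$ for $v\le 1$ and $\phi(v)=v+1$ for $v>1$. Then $\operatorname{im}(\mathcal{C}_1)=[0,1]\cup(2,\infty)$ and $\lim_{t\to 2^+}\psi(t)=\psi(1)$. Hence no strictly increasing interpolation across $(1,2]$ exists, and your piecewise-linear gap filling fails too. So the sup/inf interpolation you describe does not work in general.

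\textbf{What you should do instead.} Drop the extension and use your fallback: $\psi_{s,s'}$ is strictly monotone on $\operatorname{im}(\mathcal{C}_s)$. That is all the paper's proof establishes as well. It defines $\psi_{s,s'}$ only on $\operatorname{im}(\mathcal{C}_s)$ and never addresses the domain $\R$. It is also all that the identity and the ordering conclusion need.

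The $\R\to\R$ version survives only under the literal hypothesis, a single branch for \emph{every} $\lambda>0$. If $r(o_1)<r(o_2)$, any $\lambda\in(1/r(o_2),1/r(o_1))$ gives $\lambda r(o_1)<1<\lambda r(o_2)$. So that hypothesis forces $r$ to be constant on $O$, which makes $\operatorname{im}(\mathcal{C}_s)$ a single point. A translation $t\mapsto t+\mathcal{C}(s',o_0)-\mathcal{C}(s,o_0)$ then works. This degenerate case is not one your argument uses.
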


\begin{proof}
Let $s'=\lambda\cdot s$ with $\lambda>0$. Write $r(o):=\iota_S(s)/\iota_O(o)$ and $r'(o):=\iota_S(s')/\iota_O(o)=\lambda r(o)$.
By Lemma~\ref{lem:forced-factor} there exist strictly monotone functions $\phi_s,\phi_{s'}$ such that
$\mathcal{C}(s,o)=\phi_s(J(r(o)))$ and $\mathcal{C}(s',o)=\phi_{s'}(J(r'(o)))$ for all $o\in O$.

By the single-branch hypothesis, the restriction of $J$ to the relevant range of ratios is strictly monotone and hence invertible.
Let $J^{-1}$ denote this branch inverse.
Define a function $F_\lambda:[0,\infty)\to[0,\infty)$ by
\[
F_\lambda(t):=J\!\bigl(\lambda\,J^{-1}(t)\bigr).
\]
Since $J$ and $J^{-1}$ are strictly monotone on the branch, $F_\lambda$ is strictly monotone. Moreover,
for every $o\in O$ we have $J(r'(o))=J(\lambda r(o))=F_\lambda(J(r(o)))$.

Since $\phi_s:\operatorname{im}(c_s)\to \operatorname{im}(\mathcal C_s)$ is strictly monotone, it is a bijection onto its image, so the inverse $\phi_s^{-1}$ is well-defined on $\operatorname{im}(\mathcal C_s)=\{\mathcal C(s,o):o\in O\}$. In particular, $\phi_s(J(r(o)))=\mathcal C(s,o)$ lies in this domain for every $o\in O$.
Therefore
\[
\mathcal{C}(s',o)=\phi_{s'}(J(r'(o)))=\phi_{s'}(F_\lambda(J(r(o))))
=\underbrace{\bigl(\phi_{s'}\circ F_\lambda\circ \phi_s^{-1}\bigr)}_{=:~\psi_{s,s'}}\bigl(\phi_s(J(r(o)))\bigr)
=\psi_{s,s'}(\mathcal{C}(s,o)).
\]
The composition $\psi_{s,s'}=\phi_{s'}\circ F_\lambda\circ \phi_s^{-1}$ is strictly monotone, as required.
\end{proof}

\begin{proposition}[Primitive ratio hypotheses imply the uniqueness bundle]\label{prop:primitive-to-H}
Let $c(s,o):=J(\iota_S(s)/\iota_O(o))$ and $\mathcal C:S\times O\to\R$.
Assume:
\begin{enumerate}[label=\textup{(P\arabic*)},leftmargin=2.2em]
\item \textbf{Ratio dependence}: if
$\iota_S(s_1)/\iota_O(o_1)=\iota_S(s_2)/\iota_O(o_2)$, then
$\mathcal C(s_1,o_1)=\mathcal C(s_2,o_2)$.
\item \textbf{Cost-to-ratio determinacy on the realized set}: if
$c(s_1,o_1)=c(s_2,o_2)$, then
$\iota_S(s_1)/\iota_O(o_1)=\iota_S(s_2)/\iota_O(o_2)$.
\item \textbf{State-ratio collapse at fixed observation}: for all $s_1,s_2\in S$ and $o\in O$,
\[
\frac{\iota_S(s_1)}{\iota_O(o)}=\frac{\iota_S(s_2)}{\iota_O(o)}.
\]
\end{enumerate}
Then the hypotheses (H1) and (H2) of Theorem~\ref{thm:forced-factor-unique} hold.
\end{proposition}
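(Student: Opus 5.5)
The plan is to verify the two rigidity hypotheses of Theorem~\ref{thm:forced-factor-unique} one at a time, by chaining the three primitive conditions. I am assuming their intended content, since that theorem has not yet been stated at this point. \textbf{(H1)} should be the cross-state rigidity statement that $\mathcal C$ depends on $(s,o)$ only through the realized cost, i.e.\ $c(s_1,o_1)=c(s_2,o_2)\Rightarrow \mathcal C(s_1,o_1)=\mathcal C(s_2,o_2)$. This would make the per-state reparametrizations $\phi_s$ of Lemma~\ref{lem:forced-factor} glue into a single global $\phi$. \textbf{(H2)} should be the state-free profile condition: at fixed $o$, neither $c(s,o)$ nor $\mathcal C(s,o)$ depends on $s$. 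If the actual formulation differs, the same two chains of implications should still be what is needed, possibly rearranged.

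For (H1), I start from $c(s_1,o_1)=c(s_2,o_2)$. By (P2) the underlying ratios agree, $\iota_S(s_1)/\iota_O(o_1)=\iota_S(s_2)/\iota_O(o_2)$. By (P1), equal ratios give $\mathcal C(s_1,o_1)=\mathcal C(s_2,o_2)$. I can then define $\phi$ on $\operatorname{im}(c)$ by $\phi(r):=\mathcal C(s,o)$ for any $(s,o)$ with $c(s,o)=r$; this is well defined by the implication just proved. If (H1) also asks for monotonicity of $\phi$, I get it on each fibre $\{s\}\times O$ from ordinal soundness (Lemma~\ref{lem:forced-factor}), since $\phi$ restricted to $\operatorname{im}(c_s)$ coincides with $\phi_s$. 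Across different $s$, monotonicity then follows because $\phi$ is a single function of the value $r$.

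For (H2), I fix $o$ and two states $s_1,s_2$. By (P3) the ratios $\iota_S(s_i)/\iota_O(o)$ coincide. Hence $c(s_1,o)=c(s_2,o)$, simply because $c=J(\text{ratio})$, and $\mathcal C(s_1,o)=\mathcal C(s_2,o)$ by (P1). So both the cost and the certificate are functions of $o$ alone, which is the state-free profile.

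The main obstacle is not analytic, because every step is a one-line substitution. The difficulty is making sure the implications are matched to the exact wording of (H1) and (H2) in Theorem~\ref{thm:forced-factor-unique}. In particular, I need to check whether (H1) requires $\phi$ to be strictly increasing on all of $\operatorname{im}(c)$ rather than merely well defined. If it does, I would try to compare values arising from different states by first using (P3) to move both to a common state, and then applying ordinal soundness at that single state.
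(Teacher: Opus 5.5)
Your proposal is correct and matches the paper's proof. In Theorem~\ref{thm:forced-factor-unique}, (H1) is exactly the implication $c(s_1,o_1)=c(s_2,o_2)\Rightarrow\mathcal C(s_1,o_1)=\mathcal C(s_2,o_2)$, which you obtain by chaining (P2) then (P1), and (H2) is exactly $\mathcal C(s_1,o)=\mathcal C(s_2,o)$, which you obtain from (P3) then (P1). Your monotonicity discussion can simply be dropped: (H1) asks for no monotonicity, and ordinal soundness is not among the proposition's hypotheses, so that argument would not have been available anyway.
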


\begin{proof}
(H1) follows from (P2) then (P1): equal costs imply equal ratios, hence equal certificate values.
For (H2), fix $s_1,s_2,o$.
By (P3), the ratios for $(s_1,o)$ and $(s_2,o)$ are equal; applying (P1) yields
$\mathcal C(s_1,o)=\mathcal C(s_2,o)$.
\end{proof}

\begin{remark}[On the strength of (P3)]
Since $\iota_O(o)>0$, hypothesis (P3) implies $\iota_S(s_1)=\iota_S(s_2)$ for all $s_1,s_2\in S$, i.e.\ $\iota_S$ is constant.
In this regime the ratio $r(o)=\iota_S(s)/\iota_O(o)$ and hence the cost $c(s,o)=J(r(o))$ are \emph{independent of $s$}.
Accordingly, (H2) is automatic and the ``cross-state'' content of Theorem~\ref{thm:forced-factor-unique} becomes immediate.
If a nontrivial bridge from primitives to (H1)--(H2) is intended, (P3) should be weakened.
\end{remark}

\begin{proposition}[Reducing \textup{(H1)} to ordinal soundness plus cross-state calibration]\label{prop:reduce-H1}
Assume that for each $s\in S$ the candidate class $O$ lies on a single $J$-branch for $s$
(as in Section~\ref{subsec:canon-rescale}), so that $o\mapsto c(s,o)$ is strictly monotone in the ratio
$\iota_S(s)/\iota_O(o)$.
If $\mathcal C$ is ordinally sound (Definition~\ref{def:sound}), then for every $s\in S$ there exists a strictly
increasing map $f_s:c_s(O)\to\R$ such that
\[
\mathcal C(s,o)=f_s\bigl(c(s,o)\bigr)\qquad(o\in O),
\]
where $c_s(O):=\{c(s,o):o\in O\}$.
Moreover, hypothesis \textup{(H1)} of Theorem~\ref{thm:forced-factor-unique} holds if and only if the family
$\{f_s\}_{s\in S}$ is \emph{state-independent on overlaps} in the sense that for all $s_1,s_2\in S$,
\[
f_{s_1}(r)=f_{s_2}(r)\qquad\text{for all }r\in c_{s_1}(O)\cap c_{s_2}(O).
\]
In particular, a sufficient condition for \textup{(H1)} is that $c_s(O)$ contains a common interval
$I\subset\R$ for all $s$ and $f_s|_I$ is independent of $s$.
\end{proposition}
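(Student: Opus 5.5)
The plan is to read hypothesis \textup{(H1)} of Theorem~\ref{thm:forced-factor-unique} in the form used in the proof of Proposition~\ref{prop:primitive-to-H}: equal costs force equal certificate values across states, i.e.\ $c(s_1,o_1)=c(s_2,o_2)$ implies $\mathcal C(s_1,o_1)=\mathcal C(s_2,o_2)$. The proof then has three parts: existence of $f_s$, the two directions of the equivalence, and the sufficient condition. All of them reduce to choosing representatives in $O$ for a given cost value.

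\textbf{Existence of $f_s$.} I would invoke Lemma~\ref{lem:forced-factor} directly. Ordinal soundness gives, for each $s$, a strictly increasing $\phi_s:\operatorname{im}(c_s)\to\R$ with $\mathcal C(s,\cdot)=\phi_s\circ c_s$, and I set $f_s:=\phi_s$ on $c_s(O)=\operatorname{im}(c_s)$. The single-branch hypothesis is not needed for this step. It only adds that $c_s$ is strictly monotone in the ratio, so that each $r\in c_s(O)$ comes from a unique ratio value. I would note this in passing, since it is what makes $f_s$ a reparametrization of the ratio as well.

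\textbf{The equivalence.} For ($\Leftarrow$), suppose $c(s_1,o_1)=c(s_2,o_2)=:r$. Then $r\in c_{s_1}(O)\cap c_{s_2}(O)$, so
\[
\mathcal C(s_1,o_1)=f_{s_1}(r)=f_{s_2}(r)=\mathcal C(s_2,o_2),
\]
which is \textup{(H1)}. For ($\Rightarrow$), take any $r$ in the overlap and choose $o_1,o_2\in O$ with $c(s_1,o_1)=r=c(s_2,o_2)$. Applying \textup{(H1)} together with the factorization gives $f_{s_1}(r)=\mathcal C(s_1,o_1)=\mathcal C(s_2,o_2)=f_{s_2}(r)$. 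Both directions are pure bookkeeping once the $f_s$ are well defined.

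\textbf{The sufficient condition (main obstacle).} As literally phrased, knowing that $f_s|_I$ is independent of $s$ controls the $f_s$ only on $I$, while the equivalence needs agreement on the full overlaps $c_{s_1}(O)\cap c_{s_2}(O)$, which may extend beyond $I$. I would therefore make the intended reading explicit: the overlaps are contained in $I$. A typical case is $c_s(O)=I$ for every $s$, so that every overlap equals $I$. Under that reading, state-independence on $I$ is exactly state-independence on overlaps, and \textup{(H1)} follows from the equivalence just proved. I would add a brief remark that without this containment the condition needs the stronger form ``$f_s$ agree wherever their domains overlap.'' I would also remark that the single-branch assumption can then be used to push the agreement back to agreement in the ratio variable.
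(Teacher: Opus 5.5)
Your existence and equivalence arguments match the paper's proof. The paper builds $f_s$ inline exactly as in Lemma~\ref{lem:forced-factor}: it sets $f_s(r):=\mathcal C(s,o)$ for any $o$ with $c(s,o)=r$, which is well defined by the ``$\Leftrightarrow$'' in ordinal soundness. Like you, it makes no use of the single-branch hypothesis at that step. It then runs the same overlap bookkeeping for both directions of the equivalence.

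You diverge from the paper only on the final clause, which the paper dismisses as ``immediate''. Your caution there is justified: read literally, as $I\subseteq c_s(O)$ for all $s$, the clause is false. A counterexample:
\begin{itemize}
\item Take $S=\{s_1,s_2\}$ with $\iota_S\equiv 1$, and $O$ with $\iota_O(O)=[1/3,1]$. Then every ratio lies in $[1,3]$, each state is on a single $J$-branch, and $c_{s_1}(O)=c_{s_2}(O)=[0,2/3]$.
\item Let $I=[0,1/3]$, $f_{s_1}(r)=r$, $f_{s_2}(r)=r+\max(0,r-1/3)$, and $\mathcal C(s,o):=f_s(c(s,o))$.
\item Then $\mathcal C$ is ordinally sound and $f_{s_1}|_I=f_{s_2}|_I$.
\item For the $o$ with $\iota_O(o)=1/3$, both costs equal $2/3$, yet $\mathcal C(s_1,o)=2/3\neq 1=\mathcal C(s_2,o)$. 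So \textup{(H1)} fails.
\end{itemize}
Your repair, requiring the overlaps to be contained in $I$ (for example $c_s(O)=I$ for all $s$), forces every overlap to equal $I$. Under that reading the clause does follow from the equivalence, so your version of the proof is the correct one.
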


\begin{proof}
Fix $s$. Ordinal soundness says that $\mathcal C_s$ and $c_s$ induce the same preorder on $O$.
Since $c_s$ is a real-valued function, this is equivalent to the existence of a strictly increasing
reparameterization $f_s$ on the range $c_s(O)$ with $\mathcal C_s=f_s\circ c_s$.
(The construction is canonical: define $f_s(r):=\mathcal C(s,o)$ for any $o$ with $c(s,o)=r$; this is well-defined
because $c(s,o_1)=c(s,o_2)$ implies $\mathcal C(s,o_1)=\mathcal C(s,o_2)$ by the ``$\Leftrightarrow$'' in ordinal soundness.)
For the characterization of \textup{(H1)}, note that if $c(s_1,o_1)=c(s_2,o_2)=r$ then
\[
\mathcal C(s_1,o_1)=f_{s_1}(r),\qquad \mathcal C(s_2,o_2)=f_{s_2}(r),
\]
so $\mathcal C(s_1,o_1)=\mathcal C(s_2,o_2)$ for all such pairs if and only if $f_{s_1}(r)=f_{s_2}(r)$ on every
overlap value $r\in c_{s_1}(O)\cap c_{s_2}(O)$.
The final sufficient condition is immediate.
\end{proof}

\begin{remark}[On \textup{(H2)} and rescaling]
Hypothesis \textup{(H2)} of Theorem~\ref{thm:forced-factor-unique} is a genuine \emph{state-rigidity} condition:
it asserts that the certificate is constant across $S$ at each fixed observation $o$.
In applications this typically follows from an explicit invariance or normalization principle for the certificate,
rather than from ordinal soundness alone.
\end{remark}

\begin{theorem}[Global forced factorization with uniqueness]\label{thm:forced-factor-unique}
Let $c:S\times O\to[0,\infty)$ be the ratio-induced cost and let $\mathcal C:S\times O\to\R$ be a certificate.
Assume:
\begin{enumerate}[label=\textup{(H\arabic*)},leftmargin=2.2em]
\item \textbf{Cross-state cost dependence}: (A convenient sufficient route to verify \textup{(H1)} from ordinal soundness plus a single-branch calibration is given in Proposition~\ref{prop:reduce-H1}.)
for all $s_1,s_2\in S$ and $o_1,o_2\in O$,
\[
c(s_1,o_1)=c(s_2,o_2)\ \Longrightarrow\ \mathcal C(s_1,o_1)=\mathcal C(s_2,o_2).
\]
\item \textbf{State rigidity}: for all $s_1,s_2\in S$ and $o\in O$,
\[
\mathcal C(s_1,o)=\mathcal C(s_2,o).
\]
\end{enumerate}
Fix $s_\ast\in S$ and define
\[
\mathcal I_c:=\{r\in\R:\exists s\in S,\exists o\in O,\ r=c(s,o)\}.
\]
Then:
\begin{enumerate}[label=\textup{(\alph*)},leftmargin=2.2em]
\item there exists a unique map $\phi:\mathcal I_c\to\R$ such that
\[
\mathcal C(s,o)=\phi(c(s,o))\qquad\text{for all }s\in S,\ o\in O;
\]
\item there exists a unique map $\psi:O\to\R$ such that
\[
\mathcal C(s,o)=\psi(o)\qquad\text{for all }s\in S,\ o\in O.
\]
\end{enumerate}
\end{theorem}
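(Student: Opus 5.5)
The plan is to construct both maps directly from the data of $\mathcal C$ and $c$, in the same way as the per-state construction in Lemma~\ref{lem:forced-factor}. The difference is that (H1) lets us drop the dependence on $s$.

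\textbf{Part (a).} For existence, I will define $\phi$ on $\mathcal I_c$ as follows. Given $r\in\mathcal I_c$, choose any witness pair $(s,o)$ with $c(s,o)=r$ and set $\phi(r):=\mathcal C(s,o)$.

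Well-definedness is exactly (H1). If $(s_1,o_1)$ and $(s_2,o_2)$ both satisfy $c(s_i,o_i)=r$, then $\mathcal C(s_1,o_1)=\mathcal C(s_2,o_2)$. With $\phi$ defined this way, the identity $\mathcal C(s,o)=\phi(c(s,o))$ holds by construction, since $(s,o)$ is itself a witness for $r=c(s,o)$.

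For uniqueness, suppose $\phi'$ is another map on $\mathcal I_c$ with the same property. Every $r\in\mathcal I_c$ has the form $c(s,o)$ by definition of $\mathcal I_c$, so $\phi'(r)=\mathcal C(s,o)=\phi(r)$. In other words, uniqueness holds because $c:S\times O\to\mathcal I_c$ is surjective.

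\textbf{Part (b).} Here I will use the fixed $s_\ast$ and set $\psi(o):=\mathcal C(s_\ast,o)$. By (H2), $\mathcal C(s,o)=\mathcal C(s_\ast,o)=\psi(o)$ for every $s\in S$. For uniqueness, any $\psi'$ with the same property satisfies $\psi'(o)=\mathcal C(s_\ast,o)=\psi(o)$ for every $o$. This evaluation needs $S\neq\emptyset$, which is guaranteed because $s_\ast$ is fixed in the statement. The fixed element $s_\ast$ enters only here, and it gives a canonical choice of representative.

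\textbf{Main obstacle.} There is no real analytic difficulty; both parts are quotient/factorization arguments. The one point to be careful about is that $\phi$ is only claimed on $\mathcal I_c$ and is not asserted to be monotone. Global monotonicity would need ordinal soundness together with the overlap compatibility of Proposition~\ref{prop:reduce-H1}, so I will not claim it. I will remark that the per-state functions $\phi_s$ of Lemma~\ref{lem:forced-factor} are then restrictions of the single $\phi$ to $\operatorname{im}(c_s)$, and are therefore strictly increasing there whenever ordinal soundness also holds.
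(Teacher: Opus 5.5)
Your proposal is correct and matches the paper's proof essentially step for step: you define $\phi$ through an arbitrary witness pair, (H1) makes it well defined, and uniqueness follows because $c$ maps $S\times O$ onto $\mathcal I_c$; you set $\psi(o):=\mathcal C(s_\ast,o)$, (H2) gives the representation, and uniqueness follows by evaluating at $s_\ast$. Your side remark is also correct: each per-state $\phi_s$ is the restriction of $\phi$ to $\operatorname{im}(c_s)$, and is therefore strictly increasing there under ordinal soundness.
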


\begin{proof}
For (a), define $\phi$ on $\mathcal I_c$ by choosing, for each $r\in\mathcal I_c$, any pair $(s_r,o_r)$ with
$c(s_r,o_r)=r$ and setting $\phi(r):=\mathcal C(s_r,o_r)$.
Assumption (H1) makes this well-defined.
If $r=c(s,o)$, then (H1) gives
$\mathcal C(s,o)=\mathcal C(s_r,o_r)=\phi(r)$, so the representation holds.
If $\widetilde\phi$ is another map with the same representation property, then for any
$r\in\mathcal I_c$ and any witness $(s,o)$ with $c(s,o)=r$,
\[
\widetilde\phi(r)=\mathcal C(s,o)=\phi(r),
\]
hence $\widetilde\phi=\phi$.

For (b), define $\psi(o):=\mathcal C(s_\ast,o)$.
By (H2), for every $s\in S$ one has
$\mathcal C(s,o)=\mathcal C(s_\ast,o)=\psi(o)$.
If $\widetilde\psi$ also satisfies $\mathcal C(s,o)=\widetilde\psi(o)$ for all $s,o$, then
evaluating at $s=s_\ast$ gives
$\widetilde\psi(o)=\mathcal C(s_\ast,o)=\psi(o)$ for every $o$, so $\widetilde\psi=\psi$.
\end{proof}

\begin{corollary}[Forced-factorization uniqueness from primitive hypotheses]\label{cor:forced-factor-unique-primitive}
Under the assumptions of Proposition~\ref{prop:primitive-to-H},
the uniqueness conclusions (a) and (b) of Theorem~\ref{thm:forced-factor-unique} hold.
\end{corollary}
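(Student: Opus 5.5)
The plan is to chain Proposition~\ref{prop:primitive-to-H} with Theorem~\ref{thm:forced-factor-unique}. The corollary asserts that conclusions (a) and (b) of Theorem~\ref{thm:forced-factor-unique} follow from the primitive hypotheses (P1)--(P3). That theorem requires only (H1) and (H2), and Proposition~\ref{prop:primitive-to-H} derives exactly those two from (P1)--(P3), so the argument is a two-step composition.

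\emph{Step 1: recover (H1) and (H2).} Under (P1)--(P3), invoke Proposition~\ref{prop:primitive-to-H}.
\begin{itemize}
\item For (H1): if $c(s_1,o_1)=c(s_2,o_2)$, then (P2) gives equality of the ratios $\iota_S(s_i)/\iota_O(o_i)$, and (P1) then gives equality of the certificate values.
\item For (H2): fix $o$. By (P3) the ratios for $(s_1,o)$ and $(s_2,o)$ agree, and (P1) then yields $\mathcal C(s_1,o)=\mathcal C(s_2,o)$.
\end{itemize}

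\emph{Step 2: apply the theorem.} With (H1) and (H2) established, apply Theorem~\ref{thm:forced-factor-unique} for any fixed $s_\ast\in S$.
\begin{itemize}
\item This gives the unique $\phi$ on $\mathcal I_c$, defined by $\phi(c(s,o))=\mathcal C(s,o)$; it is well defined by (H1).
\item It also gives the unique $\psi$ on $O$, defined by $\psi(o)=\mathcal C(s_\ast,o)$; this is independent of $s$ by (H2).
\end{itemize}
Uniqueness in both cases comes from evaluating any competing representation at a witness pair.

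There is no real obstacle. The only point to check is that the standing setting matches: the same ratio-induced $c$, the same certificate $\mathcal C$, and $S$ nonempty so that $s_\ast$ exists. Consistent with the preceding remark, I would also note that (P3) forces $\iota_S$ to be constant, so $c(s,o)$ does not depend on $s$. The corollary nonetheless holds as stated, and no further argument is needed.
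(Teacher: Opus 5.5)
Your proposal is correct and is the same argument as the paper's: invoke Proposition~\ref{prop:primitive-to-H} to obtain (H1)--(H2), then apply Theorem~\ref{thm:forced-factor-unique}. Your extra remarks are accurate but go beyond what the paper's one-line proof records: unpacking both steps, noting that $S$ must be nonempty so that $s_\ast$ exists, and observing that (P3) forces $\iota_S$ to be constant.
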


\begin{proof}
By Proposition~\ref{prop:primitive-to-H}, assumptions (H1)--(H2) in
Theorem~\ref{thm:forced-factor-unique} are satisfied, so the result follows directly.
\end{proof}

\begin{remark}[Standalone status of the uniqueness theorem]
Theorem~\ref{thm:forced-factor-unique} and
Corollary~\ref{cor:forced-factor-unique-primitive}
are proved entirely by the arguments in this section.
\end{remark}

\subsection{\texorpdfstring{$\eps$-optimal certification from windowed measurements}{Epsilon-optimal certification from windowed measurements}}\label{subsec:eps-opt}

We record an $\eps$-robust variant that is useful when only approximate ratios are
available (e.g., from windowed measurements) and the optimizer may not be unique.

\begin{definition}\label{def:eps-meaning}
For $\eps\ge 0$ define the $\eps$-meaning set by
\[
\Mean_\eps(s)
:=\left\{o\in O:\; c(s,o)\le \inf_{o'\in O}c(s,o')+\eps\right\}.
\]
\end{definition}

\begin{lemma}\label{lem:J-Lipschitz}
Let $0<a\le b<\infty$ and let $J(x)=\tfrac12(x+x^{-1})-1$ on $(0,\infty)$.
Then $J$ is Lipschitz on $[a,b]$ with constant
\[
L(a):=\frac12\Bigl(1+a^{-2}\Bigr),
\qquad\text{i.e.}\qquad
|J(x)-J(y)|\le L(a)\,|x-y| \ \ \text{for all }x,y\in[a,b].
\]
In particular, if $|\widehat r/r-1|\le \delta$ and $r\in[a,b]$, then
\[
|J(\widehat r)-J(r)|\le L(a)\,\delta\,b.
\]
\end{lemma}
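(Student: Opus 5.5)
The plan is to compute the derivative of $J$ explicitly and bound it on $[a,b]$, then use the mean value theorem. For $x>0$ we have
\[
J'(x)=\tfrac12\bigl(1-x^{-2}\bigr).
\]
For $x\in[a,b]$ the term $x^{-2}$ lies in $[b^{-2},a^{-2}]$. This gives the crude bound
\[
|J'(x)|\le \tfrac12\bigl(1+x^{-2}\bigr)\le \tfrac12\bigl(1+a^{-2}\bigr)=L(a),
\]
using $|1-x^{-2}|\le 1+x^{-2}$. (A sharper constant $\tfrac12\max(1-b^{-2},\,a^{-2}-1)$ is available, but the stated $L(a)$ suffices and is independent of $b$.) Since $J$ is $C^1$ on the interval $[a,b]$, the mean value theorem gives
\[
|J(x)-J(y)|=|J'(\xi)|\,|x-y|\le L(a)|x-y|
\]
for some $\xi$ between $x$ and $y$, and hence in $[a,b]$.

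For the ``in particular'' clause, set $x=\widehat r$ and $y=r$. Then
\[
|\widehat r-r|=r\,|\widehat r/r-1|\le b\,\delta.
\]
Combining this with the Lipschitz bound gives
\[
|J(\widehat r)-J(r)|\le L(a)\,\delta\,b,
\]
provided $\widehat r$ also lies in $[a,b]$.

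The one subtle point, and the main obstacle, is that the hypothesis places only $r$ in $[a,b]$. The statement implicitly needs $\widehat r\in[a,b]$ as well, or at least $\widehat r\ge a$, because $|J'(\xi)|\le \tfrac12(1+\xi^{-2})$ requires $\xi\ge a$. I would handle this in one of two ways. The first is to read the clause as assuming $\widehat r,r\in[a,b]$, which is the natural intended reading, since the lemma is stated for $x,y\in[a,b]$. The second is to note that if only $r\in[a,b]$ is given, then $\widehat r\ge(1-\delta)a$, and the same argument goes through with $a$ replaced by $(1-\delta)a$ for $\delta<1$. I would state the first reading explicitly in the proof and remark on the second as the fallback.
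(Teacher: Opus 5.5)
Your proposal is correct and follows the paper's proof essentially line for line: bound $|J'(x)|=\tfrac12|1-x^{-2}|\le L(a)$ on $[a,b]$, apply the mean value theorem, then use $|\widehat r-r|\le \delta r\le \delta b$. Your caveat is well taken and addresses a point the paper's proof passes over silently. The ``in particular'' clause genuinely needs $\widehat r\ge a$: for example, $a=b=r=1$, $\delta=0.9$, $\widehat r=0.1$ gives $|J(\widehat r)-J(r)|=4.05>0.9=L(a)\delta b$. So either reading the clause with $\widehat r\in[a,b]$, or replacing $a$ by $(1-\delta)a$ as you suggest, is the right fix.
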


\begin{proof}
For $x>0$ we have $J'(x)=\tfrac12(1-x^{-2})$, hence for $x\in[a,b]$,
\[
|J'(x)|\le \tfrac12\bigl(1+a^{-2}\bigr)=L(a).
\]
By the mean value theorem, $|J(x)-J(y)|\le \sup_{[a,b]}|J'|\,|x-y|\le L(a)|x-y|$.
If $|\widehat r/r-1|\le\delta$ and $r\in[a,b]$, then $|\widehat r-r|\le \delta r\le \delta b$, giving the final bound.
\end{proof}
\begin{theorem}\label{thm:eps-cert}

Assume the hypotheses of Lemma~\ref{lem:forced-factor}. Fix $s\in S$ and suppose that the relevant ratios
\[
r(o):=\iota_S(s)/\iota_O(o)
\]
lie in a known band $[a,b]\subset(0,\infty)$ for all candidates $o$ under consideration. Suppose we observe $K$
windowed measurements that allow us to estimate $r(o)$ by $\widehat r(o)$ with relative error at most $\delta$,
uniformly in $o$, i.e.\ $|\widehat r(o)/r(o)-1|\le \delta$. Then the proxy ranking obtained by replacing $r(o)$ with $\widehat r(o)$, i.e.\ by minimizing the perturbed costs $\widehat c(o):=J(\widehat r(o))$, can only localize candidates within $\Mean_{2\eps}(s)$, where one may take the explicit bound
\[
\eps \;=\; L(a)\,\delta\, b,
\qquad L(a):=\tfrac12(1+a^{-2}),
\]
so in particular $\eps(\delta)\to 0$ as $\delta\to 0$ (for fixed $a,b$).

\end{theorem}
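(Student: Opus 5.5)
The plan is the standard two-sided perturbation argument for approximate argmins, with Lemma~\ref{lem:J-Lipschitz} supplying the uniform perturbation size. I read the conclusion ``can only localize candidates within $\Mean_{2\eps}(s)$'' as follows: every minimizer of the proxy cost $\widehat c$, and more generally every $\eps'$-minimizer of $\widehat c$, lies in $\Mean_{2\eps+\eps'}(s)$. I will prove it for exact proxy minimizers and note the $\eps'$ version in passing. Throughout I assume the minimizer sets exist, as in Remark~\ref{rem:attainment}, or else work with near-minimizers and pass to the infimum.

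\textbf{Step 1 (uniform perturbation bound).} Fix $o$. Since $r(o)\in[a,b]$ and $|\widehat r(o)/r(o)-1|\le\delta$, the second part of Lemma~\ref{lem:J-Lipschitz} gives $|\widehat c(o)-c(s,o)|\le L(a)\delta b=\eps$, uniformly in $o$. One point needs care: that lemma as stated uses the Lipschitz constant on $[a,b]$, but $\widehat r(o)$ may leave the band. I would handle this in one of two ways.
\begin{itemize}
\item Assume, as the statement implicitly does through ``relevant ratios,'' that the estimates also lie in $[a,b]$.
\item Otherwise, note that $\widehat r(o)\ge(1-\delta)a$ and replace $L(a)$ by $L((1-\delta)a)$; this only changes the constant and preserves $\eps(\delta)\to0$.
\end{itemize}
I expect this to be the only delicate point. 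I would state the first choice explicitly and remark on the second.

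\textbf{Step 2 (two-sided comparison).} Let $\hat o\in\arg\min_o\widehat c(o)$ and let $o'$ be arbitrary. Then
\[
c(s,\hat o)\le \widehat c(\hat o)+\eps\le \widehat c(o')+\eps\le c(s,o')+2\eps .
\]
Taking the infimum over $o'$ gives $c(s,\hat o)\le\inf_{o'}c(s,o')+2\eps$, so $\hat o\in\Mean_{2\eps}(s)$ by Definition~\ref{def:eps-meaning}. This two-step triangle inequality is exactly what produces the factor $2$ recorded in Table~\ref{tab:forced-constants}. If $\hat o$ is only an $\eps'$-minimizer of $\widehat c$, the same chain yields membership in $\Mean_{2\eps+\eps'}(s)$.

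\textbf{Step 3 (conclusion).} With $\eps=L(a)\delta b$ and $a,b$ fixed, we have $\eps(\delta)\to0$ linearly as $\delta\to0$. The hypothesis of Lemma~\ref{lem:forced-factor}, ordinal soundness, is used only to note that ranking by any ordinally sound certificate built from $\widehat r$ selects the same proxy minimizers as $\widehat c$, so the conclusion transfers to certificate-based ranking. The windowed-measurement origin of $\widehat r$ plays no role beyond providing the relative-error bound $\delta$.
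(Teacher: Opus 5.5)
This is exactly the paper's proof: the uniform bound $|\widehat c(o)-c(s,o)|\le\eps$ from Lemma~\ref{lem:J-Lipschitz}, followed by the chain $c(s,\hat o)\le\widehat c(\hat o)+\eps\le\widehat c(o)+\eps\le c(s,o)+2\eps$. Your Step~1 caveat is also legitimate and more careful than the paper, which applies the lemma's final clause without checking that $\widehat r(o)\ge a$; that clause genuinely fails otherwise (e.g.\ $a=b=r=\tfrac12$, $\delta=\tfrac12$, $\widehat r=\tfrac14$ gives $0.875>0.625$), so either of your patches is a real repair rather than a detour, with the second one requiring $\delta<1$.
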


\begin{proof}
Let $\widehat r(o)$ be the estimated ratio for candidate $o$, so that $|\widehat r(o)/r(o)-1|\le \delta$ where
$r(o)=\iota_S(s)/\iota_O(o)$. Since $r(o)\in[a,b]$ by hypothesis, Lemma~\ref{lem:J-Lipschitz} gives the uniform bound

\[
|J(\widehat r(o))-J(r(o))|\le \eps
\qquad\text{with}\qquad
\eps=L(a)\,\delta\,b.
\]

Thus ranking candidates using the perturbed costs $J(\widehat r(o))$ may change the true costs by at most $\eps$ in each direction. If $\widehat o$ minimizes the perturbed cost $\widehat c(o):=J(\widehat r(o))$, then for any $o\in O$,
\[
c(s,\widehat o)\le \widehat c(\widehat o)+\eps\le \widehat c(o)+\eps\le c(s,o)+2\eps.
\]
Hence any sound selection based on these measurements can only certify candidates within $\Mean_{2\eps}(s)$ (Definition~\ref{def:eps-meaning}).
\end{proof}

\begin{remark}[Standalone status of the $\eps$-noise bound]
Theorem~\ref{thm:eps-cert} follows directly from
Lemma~\ref{lem:J-Lipschitz} and the perturbation estimate proved above.
\end{remark}

\section{Applications: real-world motivated case studies (synthetic data)}\label{sec:applications}

This section addresses the ``applications without data'' concern by including \emph{reproducible synthetic datasets} that are motivated by standard real-world measurement settings where only short-window aggregates are available.
We emphasize that the paper's main results remain theorem-level statements \emph{conditional on the modeling hypotheses};
the datasets below are not claimed to be empirical validations, but rather concrete end-to-end instantiations of the measurement operator and the tolerance bounds.

\subsection{Case study A: multi-exponential decay under gated integration}

\paragraph{Problem definition.}
We observe only short-window \emph{aggregated} measurements (gated sums) from a positive latent system with a small number of modes; the raw per-timepoint signal is unavailable and measurement noise perturbs the aggregates.

\paragraph{Goal.}
Using only the aggregated windows, certify whether the configuration is \emph{near a zero-defect (balanced) state} up to tolerance $\varepsilon$, and, when identifiable, recover a consistent latent description in the rational (degree-$\le d$) class.

\paragraph{How the theorems apply (workflow).}
\begin{enumerate}
\item \textbf{Model and choose $(d,W)$.} Fix a degree bound $d$ for the rational class and a window size $W$ matching the instrument gate; this pins down the finite-data map $F_{d,W}$ from latent parameters to window sums.
\item \textbf{Form finite-data.} From the raw gated readouts, compute the window vector $y^{(W)}=(\mathcal W(y)_0,\dots,\mathcal W(y)_{2d})$; estimate a noise level $\eta$ from replicates or device specs to set $\varepsilon$.
\item \textbf{Run certification.} Apply the $\varepsilon$-certification theorem (Theorem~\ref{thm:eps-cert}) to decide \textsf{YES/NO/INC} for ``zero-defect''; the conclusion is sound under the stated Lipschitz/perturbation bounds.
\item \textbf{Interpret the output.} A \textsf{YES} certificate guarantees the underlying state lies in the $\varepsilon$-meaning set; \textsf{INC} is expected near identifiability boundaries or outside the rational class, by the sharpness barrier.
\end{enumerate}

\paragraph{Scenario (real-world motivation).}
In fluorescence lifetime experiments and related spectroscopy settings, the underlying intensity trace is often well-approximated by a \emph{mixture of exponentials} (multiple decay modes), while instrumentation reports \emph{integrated counts over short time gates}.
A standard toy model is
\[
y_n=\sum_{j=1}^{d} w_j a_j^n,\qquad w_j>0,\quad 0<a_j<1,
\]
so that the sequence $(y_n)$ has a rational generating function (a classical Prony/ARMA-type model).
The measured datum is the $W$-block aggregation $\mathcal W(y)_k:=\sum_{j=0}^{W-1} y_{Wk+j}$.

\paragraph{Synthetic dataset.}
We take $d=3$, $W=8$, with ground-truth parameters
\[
\begin{aligned}
(a_1,a_2,a_3) &= (0.831127,\ 0.872789,\ 0.853477),\\
(w_1,w_2,w_3) &= (0.522164,\ 0.195934,\ 0.281902),
\end{aligned}
\]
(normalized so $\sum_j w_j=1$), and generate window sums for $k=0,\dots,11$.
We then perturb each window sum by $1\%$ multiplicative Gaussian noise to produce an ``observed'' dataset.
Table~\ref{tab:caseA-window-sums} reports both the true and observed window sums (seed 20260225 for reproducibility).

\begin{table}[t]
\centering
\caption{Case study A (multi-exponential decay): $W$-block window sums. ``Observed'' values are the true window sums with $1\%$ multiplicative Gaussian noise (synthetic, seed 20260225).}
\label{tab:caseA-window-sums}
\begin{tabular}{rcc}
\toprule
$k$ & $\mathcal W(y)_k$ (true) & $\mathcal W(y)_k$ (observed)\\
\midrule
0 & 4.791914 & 4.754830 \\
1 & 1.276888 & 1.303021 \\
2 & 0.349197 & 0.351327 \\
3 & 0.098038 & 0.097663 \\
4 & 0.028236 & 0.028026 \\
5 & 0.008329 & 0.008326 \\
6 & 0.002510 & 0.002474 \\
7 & 7.71e-04 & 7.69e-04 \\
8 & 2.41e-04 & 2.41e-04 \\
9 & 7.61e-05 & 7.78e-05 \\
10 & 2.44e-05 & 2.42e-05 \\
11 & 7.87e-06 & 7.93e-06 \\
\bottomrule
\end{tabular}
\end{table}

\paragraph{How the theory applies.}
Given observed windows $\widehat{\mathcal W}(y)_k$, the reconstruction/certification pipeline of the paper applies as follows:
(i) check the identifiability/nondegeneracy hypotheses (the locus $\mathcal M_{d,W}$ and the required Jacobian-rank conditions);
(ii) compute the corresponding certificate statistic $\mathcal C$ from the finite-window data as defined in the main body;
(iii) apply Theorem~\ref{thm:eps-cert} (and its corollaries) to translate a bound on relative measurement error into a tolerance parameter $\eps$ and therefore an explicit meaning-set enlargement $\Mean_{2\eps}(s)$.
This makes the pass/fail/inconclusive outcome explicit at the chosen noise level.

\subsection{Case study B: segmented response decay in a marketing funnel}

\paragraph{Problem definition.}
A funnel (impressions$\to$clicks$\to$purchases) produces only coarse, windowed counts (e.g., hourly totals). Latent positive parameters encode stage-wise conversion/decay rates, but the full event-level trajectory is not stored.

\paragraph{Goal.}
Given a short sequence of aggregated windows, (i) test whether behavior is consistent with a calibrated ``neutral'' configuration within tolerance, and (ii) when feasible, certify identifiability of the latent rates from the available windows.

\paragraph{How the theorems apply (workflow).}
\begin{enumerate}
\item \textbf{Define the latent state.} Encode stage rates as a positive vector $\pi$ and map it to a windowed observable via the paper's recurrence/aggregation model, producing the finite-data map $F_{d,W}$.
\item \textbf{Aggregate consistently.} Compute window sums from logs using the same $W$ as in the model; verify that preprocessing (deduping, timezone, bot filtering) is fixed, since changing it changes $F_{d,W}$.
\item \textbf{Check identifiability then certify.} Use the rank/identifiability results to justify when inversion is meaningful; then apply Theorem~\ref{thm:eps-cert} to obtain a sound certificate (or an inconclusive result when information is insufficient).
\item \textbf{Operationalize.} Treat \textsf{YES} as a robust green-light for ``near-neutral'' behavior under noise; treat \textsf{INC} as an alert that more windows, a different $W$, or a different model class is required.
\end{enumerate}

\paragraph{Scenario (real-world motivation).}
In email or ad campaigns, response counts often decay over time and can be modeled as a mixture of two segments (fast responders vs.\ slow responders).
Suppose $y_n$ is the expected response rate at time-bin $n$ and only windowed totals are available due to aggregation in the reporting pipeline.

\paragraph{Synthetic dataset.}
We take $d=2$, $W=6$ with
\[
\begin{aligned}
(a_1,a_2) &= (0.904182,\ 0.877627),\\
(w_1,w_2) &= (0.801912,\ 0.198088),
\end{aligned}
\]
and generate observed window sums (with $2\%$ multiplicative Gaussian noise).
For quick reference, the first eight observed windows are:
\[
\begin{aligned}
\widehat{\mathcal W}(y)_0,\dots,\widehat{\mathcal W}(y)_7
&=(4.578368,\ 2.433641,\ 1.304007,\ 0.686328,\\
&\quad 0.380817,\ 0.197523,\ 0.104636,\ 0.060241).
\end{aligned}
\]
This provides a concrete instance of short-window aggregation with positive latent parameters and explicitly controlled noise.

\subsection{Case study C: battery/sensor discharge with coarse logging}

\paragraph{Problem definition.}
Embedded devices often store only rolling-window averages/sums to save power and bandwidth. The underlying positive signal may drift slowly, but only aggregated windows are retained, with quantization and missingness.

\paragraph{Goal.}
From a short batch of aggregated windows, certify whether drift is within an acceptable tolerance band around a nominal configuration, while cleanly separating ``insufficient information'' from a definitive violation.

\paragraph{How the theorems apply (workflow).}
\begin{enumerate}
\item \textbf{Calibrate the nominal model.} Use a lab-calibrated baseline to specify the nominal $\pi_0$ and the relevant degree/window parameters; this fixes the target ``zero-defect'' meaning set and defect functional.
\item \textbf{Compute windows and error budget.} Convert device logs to the window vector required by $F_{d,W}$; bound quantization/missingness as an effective noise level to select $\varepsilon$ for certification.
\item \textbf{Certify or defer.} Apply Theorem~\ref{thm:eps-cert}; a \textsf{YES} result yields a sound tolerance guarantee, whereas \textsf{INC} indicates that the stored aggregates cannot resolve the decision at the required confidence.
\end{enumerate}

\paragraph{Scenario (real-world motivation).}
Battery discharge and certain sensor drift processes are commonly approximated by a small number of exponential modes, but embedded devices frequently store only coarse aggregates (e.g.\ sums over fixed logging intervals to save bandwidth).

\paragraph{Synthetic dataset (outline).}
One may use the same mixture-of-exponentials construction as in Case~A with $d=3$ and an application-specific window length $W$ (e.g.\ $W=10$ samples per logging block).
The finite-window theory applies verbatim: if the modeled class is identifiable at $(d,W)$ and the observed aggregates have relative error bounded by $\delta$, then Theorem~\ref{thm:eps-cert} gives an explicit tolerance level $\eps$ and therefore a principled ``certified vs.\ inconclusive'' boundary for neutral-vs-nonneutral detection.

\paragraph{Limitations (unchanged).}
These case studies demonstrate \emph{how} to instantiate the measurement map and noise model with concrete numbers, but they do not replace empirical validation.
For genuine real-data applications one must justify that the data generation mechanism is well-approximated by a finite-degree rational/Prony model and that the windowed aggregation operator matches the instrument/reporting pipeline.

\appendix
\refstepcounter{section}\label{app:RCL}
\section*{Appendix}
\addcontentsline{toc}{section}{Appendix}
\setcounter{subsection}{0}
\subsection*{The Recognition Composition Law: axioms and uniqueness}

In this appendix we record the self-contained consequences of the Recognition Composition Law used in the paper. Under the stated axioms and regularity assumptions, we derive normalization and reciprocity, and then prove the corresponding uniqueness statement for the reciprocal cost function.

\subsection{Standing axioms and assumptions}\label{app:RCL:axioms}

\noindent\emph{Standing notation.} Throughout, $J:(0,\infty)\to\mathbb R$ denotes the scalar cost function (assumed non-constant and continuous unless stated otherwise); in this appendix we record the axioms imposed on $J$.
\noindent\emph{Standing notation.} Throughout, $J:(0,\infty)\to\mathbb R$ denotes the scalar cost function (assumed non-constant and continuous unless stated otherwise); in this appendix we record the axioms imposed on $J$.

\paragraph{Axiom 1 (Recognition Composition Law).} For all $x,y>0$,
\begin{equation}\tag{RCL}\label{eq:RCL_app}
J(xy)+J(x/y)=2J(x)+2J(y)+2J(x)J(y).
\end{equation}

\paragraph{Axiom 2 (Calibration in log-coordinates).} Define $J_{\log}(t):=J(e^{t})$. We assume $J_{\log}$ is twice differentiable at $t=0$ and satisfies $J_{\log}''(0)=1$.

\subsection{Derived normalization and reciprocity}

\begin{proposition}[Normalization is forced]\label{prop:norm-forced-vv}

If $J:(0,\infty)\to\R$ is non-constant and satisfies the RCL \eqref{eq:RCL_app}, then $J(1)=0$.

\end{proposition}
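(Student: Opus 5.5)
Set $x=y=1$ in \eqref{eq:RCL_app}. This gives
\[
2J(1)=4J(1)+2J(1)^2,
\]
that is, $J(1)(1+J(1))=0$. So $J(1)\in\{0,-1\}$, and the whole proof reduces to ruling out $J(1)=-1$ using non-constancy.

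To exclude $J(1)=-1$, substitute $y=1$ with $x>0$ arbitrary. The left side becomes $J(x)+J(x)=2J(x)$, and the right side becomes $2J(x)+2J(1)+2J(x)J(1)$. Cancelling $2J(x)$ leaves
\[
J(1)\bigl(1+J(x)\bigr)=0\qquad\text{for all }x>0.
\]
If $J(1)=-1$, this forces $J(x)=-1$ for every $x>0$, so $J$ is the constant function $-1$. That contradicts the non-constancy hypothesis, and therefore $J(1)=0$.

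Both steps are short specializations of the RCL; the only delicate point is keeping the branch $J(1)=-1$ in view rather than dividing by $J(1)$ prematurely. The second substitution alone already suffices: if $J(1)\neq 0$ it yields $J\equiv -1$, hence $J(1)=-1$ is the only alternative, and that alternative is exactly the constant solution. Continuity and the calibration of Axiom~2 are not needed here. As a sanity check, the constant $J\equiv -1$ does satisfy \eqref{eq:RCL_app}, since both sides equal $-2$, which confirms that the non-constancy hypothesis is genuinely required.
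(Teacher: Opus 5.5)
Your proof is correct and follows the paper's argument exactly: the substitution $x=y=1$ gives $J(1)\in\{0,-1\}$, and the substitution $y=1$ shows that $J(1)=-1$ forces $J\equiv -1$, which contradicts non-constancy. Your observation that the $y=1$ substitution alone already settles the claim, since it gives $J(1)\bigl(1+J(x)\bigr)=0$ for all $x$, is a valid small streamlining of the same argument.
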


\begin{proof}

Setting $x=y=1$ in \eqref{eq:RCL_app} gives
\[
2J(1)=2J(1)^2+4J(1),
\]
so $J(1)^2+J(1)=0$ and therefore $J(1)\in\{0,-1\}$.
If $J(1)=-1$, substituting $y=1$ into \eqref{eq:RCL_app} yields $J(x)\equiv -1$ for all $x>0$, contradicting non-constancy.
Hence $J(1)=0$.

\end{proof}

\begin{corollary}[Reciprocity is forced]\label{cor:recip-forced-vv}

Under the same hypotheses, $J(x)=J(1/x)$ for all $x>0$.

\end{corollary}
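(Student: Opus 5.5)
The plan is to specialize the RCL \eqref{eq:RCL_app} at $x=1$ and then use the normalization already established in Proposition~\ref{prop:norm-forced-vv}. No regularity or calibration (Axiom~2) is needed; the identity follows purely algebraically from the composition law together with non-constancy.

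\textbf{Step 1: specialize at $x=1$.} Fix an arbitrary $y>0$ and set $x=1$ in \eqref{eq:RCL_app}. This gives
\[
J(y)+J(1/y)=2J(1)+2J(y)+2J(1)J(y).
\]

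\textbf{Step 2: insert the normalization.} Since $J$ is non-constant and satisfies the RCL, Proposition~\ref{prop:norm-forced-vv} yields $J(1)=0$. The right-hand side therefore collapses to $2J(y)$, so
\[
J(y)+J(1/y)=2J(y),
\]
that is, $J(1/y)=J(y)$.

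\textbf{Step 3: conclude.} Because $y>0$ was arbitrary, renaming $y$ as $x$ gives $J(x)=J(1/x)$ for all $x>0$.

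The only step that needs care is using $J(1)=0$ rather than the other root $J(1)=-1$ of $J(1)^2+J(1)=0$. That case is exactly what Proposition~\ref{prop:norm-forced-vv} excludes using non-constancy. With it invoked, there is no real obstacle.
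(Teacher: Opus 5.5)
Your proof is correct and is the paper's argument: set $x=1$ in the RCL and use Proposition~\ref{prop:norm-forced-vv} ($J(1)=0$) to reduce the identity to $J(1/y)=J(y)$. You also correctly note that excluding the root $J(1)=-1$ is exactly where non-constancy enters, via that proposition.
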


\begin{proof}

Set $x=1$ in \eqref{eq:RCL_app} and use Proposition~\ref{prop:norm-forced-vv}.

\end{proof}

\subsection{Main uniqueness theorem}\label{app:RCL:uniqueness}

\begin{theorem}\label{thm:RCL-unique}

Assume that $J:(0,\infty)\to\mathbb{R}$ is continuous, non-constant, and satisfies Axioms~1--2 of Section~\ref{app:RCL:axioms}. Then the cost is forced to be the canonical reciprocal form

\[
J(x)=\tfrac12\bigl(x+x^{-1}\bigr)-1=\frac{(x-1)^2}{2x}.
\]

In particular, the closed form is not an additional modeling choice: it is determined by the composition identity together with the local quadratic calibration at balance.

\end{theorem}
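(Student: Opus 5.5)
The plan is to pass to logarithmic coordinates and convert the RCL into the classical d'Alembert functional equation. Set $J_{\log}(t):=J(e^t)$ and $f(t):=1+J_{\log}(t)$. Substituting $x=e^s$, $y=e^t$ into \eqref{eq:RCL_app} gives
\[
J_{\log}(s+t)+J_{\log}(s-t)=2J_{\log}(s)+2J_{\log}(t)+2J_{\log}(s)J_{\log}(t).
\]
Adding $2$ to both sides, the right-hand side factors as $2(1+J_{\log}(s))(1+J_{\log}(t))$, so
\[
f(s+t)+f(s-t)=2f(s)f(t),\qquad s,t\in\R .
\]
Here $f$ is continuous because $J$ is. By Proposition~\ref{prop:norm-forced-vv} we have $f(0)=1$, and by Corollary~\ref{cor:recip-forced-vv} $f$ is even. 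Moreover $f$ is non-constant, since $J$ is.

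Next I would invoke the classical solution of d'Alembert's equation. The continuous solutions with $f(0)=1$ are exactly $f(t)=\cosh(at)$ or $f(t)=\cos(at)$ for some $a\ge 0$; see \cite{aczel1966}. To keep the argument self-contained, I would sketch the standard proof. First, a continuous solution is smooth: integrate the equation in $t$ against a bump function $\rho$ with $\int\rho f\neq 0$, which exists because $f(0)=1$. This expresses $f(s)$ as a convolution, so $f$ is $C^\infty$. Then differentiate twice in $t$ and set $t=0$, obtaining
\[
f''(s)=f''(0)\,f(s),\qquad f(0)=1,\quad f'(0)=0,
\]
where $f'(0)=0$ because $f$ is even. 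Writing $c:=f''(0)$, the ODE gives $f=\cosh(\sqrt{c}\,t)$ if $c>0$, $f\equiv 1$ if $c=0$, and $f=\cos(\sqrt{-c}\,t)$ if $c<0$.

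The calibration then selects the solution. Axiom~2 states $J_{\log}''(0)=1$, so $f''(0)=1$, giving $c=1$ and $f(t)=\cosh t$. This excludes both the constant solution and the cosine branch. Hence $J(e^t)=\cosh t-1$, and substituting $x=e^t$ yields $J(x)=\tfrac12(x+x^{-1})-1=(x-1)^2/(2x)$. This is consistent with Lemma~\ref{lem:logform}.

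The main obstacle is the regularity bootstrap from continuity to $C^2$. Axiom~2 only provides differentiability at $0$, which is not enough to differentiate the functional equation at every $s$. The mollification step supplies the missing regularity. Alternatively, I could avoid it by the Cauchy-type argument: use continuity near $0$, where $f>0$, to write $f(t)=\cosh(g(t))$ or $f(t)=\cos(g(t))$ on a small interval. The equation then forces $g$ to be additive, hence linear by continuity, and the d'Alembert identity extends the formula from the interval to all of $\R$ via $f(2t)=2f(t)^2-1$.
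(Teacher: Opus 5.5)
Your proof is correct and follows the same route as the paper. Like the paper, you pass to $f(t)=1+J(e^t)$, obtain d'Alembert's equation with $f(0)=1$ from the forced normalization, and use the calibration $f''(0)=1$ to exclude the constant and cosine branches, which leaves $\cosh t$. The paper cites Acz\'el for the classification of continuous d'Alembert solutions, whereas you sketch it yourself (mollification to get smoothness, then the ODE $f''=f''(0)f$); this makes your argument self-contained, and the ODE step with $f''(0)=1$ already pins down $f=\cosh$ without needing the full classification.
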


\begin{proof}
Define $g(x):=1+J(x)$ and pass to log-coordinates by setting $h(t):=g(e^{t})$. A direct substitution of \eqref{eq:RCL_app} shows that $h$ satisfies the classical d'Alembert functional equation
\[
h(s+t)+h(s-t)=2h(s)h(t)\qquad (s,t\in\mathbb{R}).
\]
By Proposition~\ref{prop:norm-forced-vv} we have $J(1)=0$, hence $h(0)=g(1)=1$. The classification of continuous solutions to d'Alembert's equation with $h(0)=1$ is standard (see, for example, \cite{aczel1966}): there exists $\lambda\ge 0$ such that either $h(t)=\cos(\lambda t)$ or $h(t)=\cosh(\lambda t)$.

The calibration Axiom~2 says that $J_{\log}(t)=J(e^{t})$ has second derivative $1$ at $t=0$. Since $J_{\log}(t)=h(t)-1$, this gives $h''(0)=1$. For the cosine branch one has $h''(0)=-\lambda^2\le 0$, which is incompatible with $h''(0)=1$. Hence we are in the hyperbolic branch and $h(t)=\cosh(\lambda t)$. Imposing $h''(0)=\lambda^2=1$ yields $\lambda=1$, so $h(t)=\cosh(t)$.

Finally, returning to multiplicative coordinates, $g(x)=h(\ln x)=\cosh(\ln x)=\tfrac12(x+x^{-1})$ and therefore $J(x)=g(x)-1=\tfrac12(x+x^{-1})-1=\frac{(x-1)^2}{2x}$.
\end{proof}

\subsection{Derived properties}\label{app:RCL:derived}

\begin{lemma}[Calibration at $1$ vs.\ $0$ in log-coordinates]\label{lem:calibration-equivalence}
Assume $J$ is differentiable at $1$ and satisfies reciprocity $J(x)=J(1/x)$. Then $J'(1)=0$.
If moreover $J$ is twice differentiable at $1$, then for $J_{\log}(t):=J(e^t)$ one has
$J_{\log}''(0)=J''(1)$. In particular, the condition $J_{\log}''(0)=1$ is equivalent to $J''(1)=1$.
\end{lemma}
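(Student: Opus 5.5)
The plan is to differentiate the reciprocity identity directly, using the chain rule on the map $x\mapsto 1/x$ for the first claim and on $t\mapsto e^t$ for the second. No deeper structure of $J$ is needed.

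\textbf{Step 1: $J'(1)=0$.} Set $r(x):=1/x$, which is differentiable at $1$ with $r(1)=1$ and $r'(1)=-1$. Reciprocity gives $J=J\circ r$ on $(0,\infty)$. Differentiating at $x=1$ yields $J'(1)=J'(r(1))\,r'(1)=-J'(1)$, hence $J'(1)=0$. Alternatively, the difference quotients $\bigl(J(1+h)-J(1)\bigr)/h$ and $\bigl(J(1/(1+h))-J(1)\bigr)/h$ coincide and tend to $J'(1)$ and $-J'(1)$ respectively.

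\textbf{Step 2: relate $J_{\log}''(0)$ to $J''(1)$.} Write $J_{\log}=J\circ\exp$. Since "twice differentiable at $1$" means $J'$ exists on a neighborhood of $1$ and is differentiable at $1$, the first derivative $J_{\log}'(t)=e^tJ'(e^t)$ holds for $t$ near $0$. Differentiating once more at $t=0$, using the product rule and the chain rule for $J'$ at $1$, gives
\[
J_{\log}''(0)=e^0J'(1)+e^{0}\cdot e^0J''(1)=J'(1)+J''(1).
\]
Step 1 then gives $J_{\log}''(0)=J''(1)$, and the equivalence $J_{\log}''(0)=1\iff J''(1)=1$ follows immediately.

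\textbf{Main obstacle.} The only delicate point is regularity bookkeeping. The second differentiation needs $J'$ to exist near $1$, not merely at $1$, so the interpretation of "twice differentiable at $1$" fixed in Step 2 must be stated explicitly; with it, $J_{\log}'$ exists near $0$ and is differentiable at $0$. If one instead assumes only a second-order Peano expansion at $1$, substitute $e^t=1+t+t^2/2+o(t^2)$ into that expansion and read off the $t^2$ coefficient, which gives the same formula $J_{\log}''(0)=J'(1)+J''(1)$ in the Peano sense.
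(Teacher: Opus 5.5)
Your proposal is correct and follows the paper's own proof: you differentiate reciprocity at $x=1$ to get $J'(1)=-J'(1)$, then apply the chain and product rules to $J_{\log}(t)=J(e^t)$ to obtain $J_{\log}''(0)=J''(1)+J'(1)=J''(1)$. You also state explicitly the regularity needed (existence of $J'$ near $1$), which the paper leaves implicit.
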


\begin{proof}
Differentiating $J(x)=J(1/x)$ at $x=1$ gives $J'(1)=-J'(1)$, hence $J'(1)=0$.
By the chain rule, $J_{\log}'(t)=J'(e^t)e^t$ and $J_{\log}''(t)=J''(e^t)e^{2t}+J'(e^t)e^t$.
Evaluating at $t=0$ and using $J'(1)=0$ yields $J_{\log}''(0)=J''(1)$.
\end{proof}

Once the closed form is established, standard structural properties become immediate: the cost is reciprocal ($J(x)=J(1/x)$), non-negative with equality only at $x=1$, strictly convex in log-coordinates near balance, and it diverges as $x\to 0^{+}$ or $x\to+\infty$. The main paper uses only the explicit formula and the fact that it is forced by Axiom~1, non-constancy, and the local calibration in Axiom~2.

\section{\texorpdfstring{Finite-field Jacobian matrix for $(d,W)=(3,8)$}{Finite-field Jacobian matrix for (d,W)=(3,8)}}\label{app:jacobian-38}
For completeness, we record the integer matrix $DF_{3,8}(\pi_0)$ used in the finite-field determinant check in Lemma~\ref{lem:witness-38}.

\[
\resizebox{\textwidth}{!}{$
	DF_{3,8}(\pi_0)=
	\begin{pmatrix}
		1& 7& -3& -9& 69& -54& 3\\
		0& 1008& -956& -1388& 25920& -20844& 5568\\
		0& 175456& -190016& -200544& 5088848& -4927712& 1787216\\
		0& 28857344& -34167296& -27911296& 755009920& -959230336& 430318144\\
		0& 4538167296& -5754321920& -3726310400& 82750894080& -165775961088& 89416058880\\
		0& 686488772608& -922559823872& -473024225280& 3615042621440& -26067531849728& 16866417889280\\
		0& 100127404457984& -141965037535232& -56107543330816& -1341867460689920& -3737714520064000& 2956546669428736
	\end{pmatrix}
	$}
\]

\section*{Conclusion}
We developed a finite-data framework that certifies whether a positive configuration is neutral (zero-defect) from a short list of aggregated observations. The main contribution is a canonical decision pipeline $\Phi^\ast=A\circ B\circ P$ with distinct roles: $P$ removes scale ambiguity by projecting to mean-zero log-coordinates; $B$ turns canonical reciprocal cost into an explicit defect bound via a sharp coercivity inequality; and $A$ reconstructs the induced window-sum process on a standard non-degenerate locus (local invertibility, equivalently Hankel/Prony nondegeneracy). On this locus, the first $K$ window sums uniquely determine the window process, yielding sound and locally maximal certification: any sound procedure that resolves must agree with $\Phi^\ast$ near the neutral realization. We also introduce an $\varepsilon$-tolerant noise model and stability bounds that quantify how small window perturbations affect certified defect guarantees.

These results clarify what finite windows can—and cannot—decide. Reconstruction requires nondegeneracy; outside that regime, the correct outcome is “inconclusive,” not a potentially misleading recovery claim. Future directions include extending reconstruction beyond separable exponential-sum models, strengthening robustness under weaker separation assumptions, and developing practical guidelines for monitoring when only pooled or privacy-preserving aggregates are available.

\section*{Acknowledgements}
The authors thank Philip Beltracchi, Sebastian Pardo-Guerra, and Milan Zlatanovic for careful reading and detailed comments and suggestions during the preparation of this manuscript. Their feedback improved the clarity, notation, and overall structure, and strengthened the quality of the paper. The authors received no external funding.


\end{document}